\newtheorem{theorem}{Theorem}[section]
\newtheorem{lemma}{Lemma}[section]
\newtheorem{proposition}{Proposition}[section]
\newtheorem{corollary}{Corollary}[section]
\newdefinition{remark}{Remark}[section]
\begin{document}

\begin{frontmatter}



\title{Indefinite linear quadratic  control of mean-field backward stochastic differential equation}



\author[1,2]{Wencan Wang}           \ead{wwencan@163.com}
\author[3]{Huanjun Zhang\corref{cor1}}                \ead{zhhuanjun@163.com}
\cortext[cor1]{Corresponding author}
\address[1]{School of Mathematical and Physical Sciences, Wuhan Textile University, Wuhan 430200, Hubei,  PR China}
\address[2]{School of Control Science and Engineering, Shandong University, Jinan 250061, Shandong,   PR China}
\address[3]{School of Mathematics and Statistics, Shandong Normal  University, Jinan 250014, Shandong,  PR China}

\begin{abstract}
This paper is concerned with a  general linear quadratic (LQ) control problem of mean-field  backward stochastic differential equation (BSDE).
Here, the weighting matrices  in the cost functional are allowed to be indefinite.   Necessary and sufficient conditions for optimality are obtained via a mean-field forward-backward stochastic differential equation (FBSDE).
By investigating the connections with LQ problems of mean-field forward systems and taking some limiting procedures, we establish the solvabilities of corresponding Riccati equations in the case that  cost functional is uniformly convex. Subsequently,  an explicit formula of   optimal control and  optimal cost  are derived. Moreover, some sufficient conditions for the uniform convexity of  cost functional are also proposed in terms of Riccati equations, which have not been considered in existing literatures for backward systems. Some examples are provided to
illustrate our results.
\end{abstract}



\begin{keyword}
Mean-field, indefinite linear quadratic control, backward stochastic differential equation,
Ricatti equation, uniform convexity
\end{keyword}

\end{frontmatter}


\section{Introduction}
In recent years, there is an increasing interest on mean-field type stochastic control problems due to their  wide applications in mathematics, engineering and finance. Different from classical  stochastic control problems, a new feature of the  problems is that both stochastic system and cost functional embody the state and the control along with their statistical distributions, which provides a simple but effective technique  for describing  individuals' mutual interactions. In this case, the controlled system is an SDE of McKean-Vlasov type, which  was first introduced  by Kac \cite{Kac1956}. Since then, there have been considerable results on related topics. Interested readers may
refer to \cite{BDLP2009,BLP2009} for  the theory of  mean-field BSDEs,  \cite{AD2011,BLM2016,WXX2017,WZZ2013,WW2022} for various
versions of stochastic maximum principles for mean-field models. The past few years have also witnessed rapid development of mean-field LQ control theory, both in finite and infinite horizons \cite{Yong2013,HLY2017,ELN2013,NEL2015}.
Yong \cite{Yong2013} investigated a  mean-field LQ problem in finite horizon systemically by using  a decoupling technique. A feedback  optimal control is characterized by  two coupled  Riccati equations.  Further in \cite{HLY2017}, Huang et. al. studied an infinite horizon case, where an optimal control is expressed via    two coupled algebraic Riccati equations. Elliott et. al. \cite{ELN2013} and Ni
et. al. \cite{NEL2015} dealt with discrete time mean-field LQ problems for  finite horizon  and infinite horizon, respectively.  We remark that all the aforementioned papers followed the  standard assumption, that is the weighting matrices in  cost functional are imposed with some positive-definite conditions.

The study of indefinite LQ control problems can be  traced back to  Chen et. al. \cite{CLZ1998}, who first pointed out that some stochastic LQ problems with indefinite (in particular, negative) control weighting matrices may still be sensible and well-posed. 
An equivalent cost functional method was introduced in \cite{Yu2013} to study an indefinite LQ control problem, which was extended to  mean-field case in \cite{WW2021}. Li et. al. \cite{LLY2020} studied an indefinite LQ problem of mean-field SDE by introducing  relax compensators. The open-loop and closed-loop solvabilities for LQ control problems of mean-field type are discussed in \cite{sun2017} and  \cite{LSY2016}, respectively. It was shown that the standard assumption is not  necessary   for the solvabilities of mean-field LQ control problems.
Ni et. al. \cite{NLZ2015,NZL2014} considered   discrete time indefinite LQ optimal control problems of mean-field type with   infinite horizon and  finite horizon, respectively. Wang and Zhang \cite{WZ2021} investigated uniform stabilization and asymptotic optimality for indefinite mean-field LQ social control system with multiplicative noises. We point out that  the existing literatures on indefinite LQ control problems focused on forward stochastic systems.

In financial investment, a European contingent claim $\zeta$, which is a random variable, can be regarded as a contract to be guaranteed at maturity $T$.
One frequently encounters  the situation that funds may be injected or withdrawn from the replication process of a contingent claim so as to achieve some other goals.
This naturally results in stochastic control problems of  BSDEs \cite{LSX2019, DHW2018,FHW2021,HWW2016}.
Li et. al. \cite{LSX2019} considered an LQ control problem of mean-field BSDE, where  explicit formulas of optimal control and optimal cost are derived. Mean-field games and social optimal problems of backward stochastic systems with LQ formulation are discussed in \cite{DHW2018,FHW2021,HWW2016}.
However, the study on indefinite LQ control problems of BSDEs is quite lacking in literatures. To our best knowledge, there are only a few papers on this type of control problems, including \cite{SWX2019} and \cite{SWX2022}, which constructed the optimal control in terms of a Riccati equation, an adjoint equation and a BSDE under the uniform convexity  of cost functional for homogeneous and nonhomogeneous cases, respectively. 
Indefinite LQ control problems of mean-field BSDEs are underdeveloped in literatures and  many fundamental questions remain unsolved.

This paper is concerned with  an indefinite LQ control problem of mean-field BSDE.
As  preliminary results, we  derive necessary and sufficient conditions
for  optimality. The optimal control is characterized via a mean-field FBSDE,   together with a stationarity condition. By investigating the  connections
with forward mean-field LQ problems, we introduce some reductions of Problem (BLQ), which leads to an equivalent control problem with \eqref{eq: simcoeff} holds. Combining with  this transformation and some limiting procedures, we derive  the   solvabilities of  Riccati equations associated with Problem (BLQ).
With these results, we construct an  explicit formulas of  optimal control and optimal cost under the uniform convexity of cost functional in terms  of Riccati equations, an adjoint process and a mean-field BSDE. Finally,  by empolying the equivalent cost functional method, we proposed some sufficient conditions for the uniform convexity of cost functional in terms of Riccati equations.

Our work distinguishes itself from existing literatures in the following aspects.
(i) We develop a general procedure of constructing optimal control of Problem (BLQ) under  the uniform convexity of cost functional (Assumption $A3$), which extends the results in \cite{SWX2023,SWX2022}. With the appearance of mean-field term, we need to introduce two coupled Riccati equations \eqref{eq.RE1}-\eqref{eq.RE2}, which play   important roles in deriving explicit formulas of  optimal control and optimal cost. 
Moreover, it takes more efforts to establish the connections between Problem (BLQ) and Problem (FLQ$_{\lambda}$) as well as simplify Problem (BLQ), due to our mean-field setting.
(ii) In this paper,  a general indefinite  LQ control problem of mean-field BADE is considered, where the cross-product terms of control and state processes are involved in cost functional. Moreover, both the state equation and the cost functional contain the nonhomogeneous terms.
As we can  see in Section 3, the cross-product terms and nonhomogeneous terms bring lots of difficulties in deriving  optimal control and optimal cost  of Problem (BLQ). Compared with \cite{LSX2019}, the   representations of  solution $Z$ and  optimal control $u$ in Theorem \ref{Th: no-feedback} deserves  more complex structures.
(iii) We provide some necessary and sufficient conditions for the uniform convexity of cost functional in terms of Riccati equations \eqref{eq.RE1} and \eqref{eq.RE2}. 
Due to the special structure of  backward systems, we  adopt the equivalent cost functional method  to prove the sufficiency in our paper, which is quite different  from mean-field LQ control problems of forward stochastic systems. Moreover, as we will see in
Example 6.2, the equivalent cost functional method also provides an alternative and effective way to obtain the uniform convexity of cost functional.
In fact, \eqref{eq:equi} implies that  the uniform convexity is equivalent for a family of  cost functional $J_{h}(\zeta;  u)$.
Thus, in
order to obtain the uniform convexity of cost functional for original control problem,
we need only to find an equivalent cost functional satisfying Assumption A3.

The rest of this paper is organized as follows.  In Section 2, we formulate an indefinite LQ control problem of mean-field BSDE and  give some preliminary results. Section 3 aims to derive explicit formulas  of optimal control and optimal cost in the case that the cost functional is uniformly convex. In Section 4, we  devoted to giving sufficient conditions for the uniform  convexity of  cost functional in terms of Riccati equations. Section 5 gives
several illustrative examples. Finally, we
conclude this paper.
\section{Problem formulation and preliminaries}
Throughout this paper,  $\mathbb R^{n\times m}$ denotes  the set of all $n\times m$   matrices and   $\mathbb S^n$  denotes  the set of all $n\times n$ symmetric matrices. 
In particular, we mean by $S\geq 0$ $(S>0)$ if $S$  is a nonnegative  (positive) definite matrix. For a matrix valued
function $S: [0, T] \to  \mathbb S^n$, we mean by $S\gg 0$  that $S(t)$ is uniformly positive,
i. e., there is a positive real number $\alpha$ such that $S(t) \geq \alpha I$ for any $t \in  [0, T ]$. For a matrix $M\in \mathbb R^{n\times m}$, let $M^\top$ be its transpose. The inner product $\langle\cdot,\cdot\rangle$ on $\mathbb R^{n\times m}$ is defined by $\langle M,N\rangle\mapsto tr(M^\top N)$ with an  induced norm  $|M|=\sqrt{tr(M^\top M)}$. Let $(\Omega,\mathcal F, \mathbb F, \mathbb P)$ be a complete filtered probability space and let $T>0$ be a fixed time horizon.  $W(t): 0\leq t\leq T$ is  an  $\mathbb R$-valued standard Wiener process, defined on $(\Omega,\mathcal F, \mathbb F, \mathbb P)$. $\mathbb F\equiv\{\mathcal F_t\}_{t\geq0}$ is a natural filtration of $W(\cdot)$ augmented by all   $\mathbb P$-null sets. 
For any Euclidean space $\mathbb{M}$, we give the following notations:\\
$\mathcal L_{\mathcal F_T}^2(\Omega; \mathbb{M})=\Big\{\zeta: \Omega\to \mathbb{M}| \zeta $ is an $\mathcal F_T$-measurable random variable, $\mathbb E[|\zeta|^2]<\infty$\Big\};\\
$\mathcal L^\infty(0,T;\mathbb{M})=\Big\{v:[0,T]\to \mathbb{M}|v$ is a bounded and deterministic  function\Big\};\\
$\mathcal L_{\mathbb F}^2(0,T;\mathbb{M})=\bigg\{v:[0,T]\times \Omega\to \mathbb{M}|v$  is an $\mathbb F$-progressively measurable stochastic process, $\mathbb E\left[\int_{0}^T|v(t)|^2dt\right]<\infty\bigg\};$\\
$\mathcal S_{\mathbb F}^2(0,T;\mathbb{M})=\bigg\{v:[0,T]\times \Omega\to  \mathbb{M}|v$ is an $\mathbb F$-progressively measurable stochastic process and has continuous paths, $\mathbb E\left[\sup_{t\in[0,T]}|v(t)|^2\right]<\infty\bigg\}$.

Consider a controlled mean-field linear BSDE
\begin{equation}\label{station 1}
\left\{ \begin{aligned}
dY(t)=&\big[A(t)Y(t)+\widehat{A}(t)\mathbb E[Y(t)]+B(t)u(t)+\widehat{B}(t)\mathbb E[u(t)]+C(t)Z(t)+\widehat{C}(t)\mathbb E[Z(t)]+f(t)\big]dt\\&+Z(t)dW(t),  \\
Y(T)=&\ \zeta,
\end{aligned}\right.
\end{equation}
where $\zeta\in \mathcal{L}_{\mathcal F_T}^2(\Omega; \mathbb R^n)$ and $u(\cdot)$, valued in $\mathbb R^m$, is a control process.
Introduce an admissible control set $\mathcal U[0, T]=\mathcal L_{\mathbb F}^2(0, T;\mathbb{R}^m)$.
Any $u\in \mathcal U[0, T]$ is called an  admissible control.\\
\textbf{Assumption $A1$:} The coefficients of  system \eqref{station 1} satisfy
\begin{equation*}
A(\cdot),  \widehat{A}(\cdot), C(\cdot), \widehat{C}(\cdot)\in\mathcal L^\infty(0,T;\mathbb R^{n\times n}),\
 B(\cdot), \widehat{B}(\cdot) \in\mathcal L^\infty(0,T;\mathbb R^{n\times m}), f\in \mathcal L_{\mathbb F}^2(0, T;\mathbb R^n).
\end{equation*}
Under Assumption $A1$,  system   \eqref{station 1} admits a unique solution pair  $(Y,  Z)\in\mathcal S_{\mathbb F}^2(0, T;\mathbb R^n)\times\mathcal L_{\mathbb F}^2(0, T;\mathbb R^n)$, which is called the corresponding state process, for any $u\in \mathcal U[0, T]$ (see Li et. al. \cite{LSX2019}).
We introduce a quadratic cost functional
\begin{equation}\label{cost functional}
\begin{aligned}
J(\zeta;  u)=&\frac{1}{2}\mathbb E\left[\langle GY(0),  Y(0)\rangle
+\langle \widehat{G}\mathbb E[Y(0)],  \mathbb E[Y(0)]\rangle+2\langle g,  Y(0)\rangle\right.\\&+2\int_{0}^T\Bigg\langle\left(\begin{array}{c}q(t)\\\rho_1(t)\\\rho_2(t)\end{array}\right),  \left(\begin{array}{c}Y(t)\\Z(t)\\u(t)\end{array}\right)\Bigg\rangle dt
\\&+\int_{0}^T\Bigg\langle\left(\begin{array}{ccc}
Q(t)&S_{1}(t)&S_{2}(t)
\\S_{1}^\top(t)&R_{11}(t)&R_{12}(t)
\\S_{2}^\top(t)&R_{12}^\top(t)&R_{22}(t) \end{array}\right)\left(\begin{array}{c}Y(t)\\Z(t)\\u(t)\end{array}\right),  \left(\begin{array}{c}Y(t)\\Z(t)\\u(t)\end{array}\right)\Bigg\rangle dt
\\&\left.+\int_{0}^T\Bigg\langle\left(\begin{array}{ccc}
\widehat{Q}(t)&\widehat{S}_{1}(t)&\widehat{S}_{2}(t)
\\\widehat{S}_{1}^\top(t)&\widehat{R}_{11}(t)&\widehat{R}_{12}(t)
\\\widehat{S}_{2}^\top(t)&\widehat{R}_{12}^\top(t)&\widehat{R}_{22}(t) \end{array}\right)\left(\begin{array}{c}\mathbb E[Y(t)]\\\mathbb E[Z(t)]\\\mathbb E[u(t)]\end{array}\right),  \left(\begin{array}{c}\mathbb E[Y(t)]\\\mathbb E[Z(t)]\\\mathbb E[u(t)]\end{array}\right)\Bigg\rangle dt\right],
\end{aligned}
\end{equation}
where $g$ is an $\mathcal F_0$-measurable random variable satisfying $\mathbb E[|g|^2] < \infty$.
For simplicity, we may suppress time $t$  and use the following notations in this paper:
\begin{equation*}
\begin{aligned}
& S=\left(\begin{array}{cc}S_1&S_2\end{array}\right), \ \widehat{S}=\left(\begin{array}{cc}\widehat{S}_1&\widehat{S}_2\end{array}\right),
R=\left(\begin{array}{cc}R_{11}&R_{12}\\R_{12}^\top&R_{22}\end{array}\right),  \ \widehat{R}=\left(\begin{array}{cc}\widehat{R}_{11}&\widehat{R}_{12}
\\\widehat{R}_{12}^\top&\widehat{R}_{22}\end{array}\right),
\\&\widetilde{P}=P+\widehat{P},  \text{ for } P=A,  B,  C,  Q,  S_1,  S_2,  S,  R_{11},  R_{12},  R_{22},  R,   G.
\end{aligned}
\end{equation*}
\textbf{Assumption $A2$:} The weighting matrices in  cost functional \eqref{cost functional} satisfy
\begin{equation*}
\left\{\begin{aligned}
&G,  \widehat{G}\in \mathbb S^{n}, \ Q,  \widehat{Q}\in\mathcal L^\infty(0, T;\mathbb S^{n}),  \ S_1,  \widehat{S}_1\in\mathcal L^\infty(0, T;\mathbb R^{n\times n}),  \ S_2,  \widehat{S}_2\in\mathcal L^\infty(0, T;\mathbb R^{n\times m}),
\\&R_{11},  \widehat{R}_{11}\in\mathcal L^\infty(0, T;\mathbb S^{n}),  R_{12},  \widehat{R}_{12}\in\mathcal L^\infty(0, T;\mathbb R^{n\times m}), R_{22},  \widehat{R}_{22}\in\mathcal L^\infty(0, T;\mathbb S^{m}),
\\&q\in \mathcal L_{\mathbb F}^2(0, T;\mathbb R^n), \rho_1\in\mathcal L_{\mathbb F}^2(0, T;\mathbb R^n), \rho_2\in\mathcal L_{\mathbb F}^2(0, T;\mathbb R^m).
\end{aligned}\right.
\end{equation*}
Our mean-field backward stochastic LQ control problem   can be stated as follows.
\\\textbf{Problem (BLQ)}. Find a $u^*\in \mathcal U[0, T]$ such that
\begin{equation}\label{inf}
V(\zeta)=J(\zeta;  u^*)=\inf_{u\in \mathcal U[0, T]}J(\zeta;  u).
\end{equation}
Any $u^*\in \mathcal U[0,T]$ satisfying \eqref{inf} is called an optimal control of Problem (BLQ), and the corresponding state process $(Y^*,Z^*)$   is called an optimal state process. $(Y^*,Z^*, u^*)$ is called an optimal triple. In the special case that  $f, g, q, \rho_1, \rho_2$ vanish, we
denote the corresponding Problem (BLQ) by Problem (BLQ)$_0$. The corresponding cost functional is denoted by $J_0(\zeta; u)$. Under Assumption  $A2$,  cost functional \eqref{cost functional} is well-defined.  
The following theorem gives a characterization of optimal control for Problem (BLQ).
\begin{theorem}\label{necessary condition}
Suppose that  Assumptions $A1$-$A2$ hold and $\zeta\in\mathcal L_{\mathcal F_T}^2(\Omega; \mathbb R^n)$ is given.  $u^*\in \mathcal U[0, T]$ is  an optimal control of Problem (BLQ)   if and only if the following conditions hold:\\
(i) $J_0(0; u)\geq 0$, $\forall u\in \mathcal U[0,T]$.
\\(ii) The mean-field FBSDE
\begin{equation}\label{eq: Hamiltonian}
\left\{\begin{aligned}
dX^*=&-\left(A^\top X^*+\widehat{A}^\top \mathbb E[X^*]+QY^*+\widehat{Q}\mathbb E[Y^*]+S_1Z^*+\widehat{S_1} \mathbb E[Z^*]+S_2u^*+\widehat{S}_2\mathbb E[u^*]+q\right)dt
\\&-\left(C^\top X^*+\widehat{C}^\top \mathbb E[X^*]+S_1^\top Y^*+\widehat{S}_1^\top \mathbb E[Y^*]+R_{11} Z^*+\widehat{R}_{11}\mathbb E[Z^*]+R_{12}u^*+\widehat{R}_{12}\mathbb E[u^*]+\rho_1\right)dW,
\\dY^*=&\ \big(AY^*+\widehat{A}\mathbb E[Y^*]+Bu^*+\widehat{B}\mathbb E[u^*]+CZ^*+\widehat{C}\mathbb E[Z^*]+f\big)dt+Z^*dW,
\\X^*(0)=&-GY^*(0)-\widehat G\mathbb E[Y^*(0)]-g,     Y(T)= \zeta 
\end{aligned}
\right.
\end{equation}
admits unique solution $(X^*,  Y^*,  Z^*)$, such that
\begin{equation*}
B^\top X^*+\widehat{B}^\top \mathbb E[X^*]+S_2^\top Y^*+\widehat{S}_2^\top \mathbb E[Y^*]+R_{12}^\top Z^*+\widehat{R}_{12}^\top \mathbb E[Z^*]+R_{22}u^*+\widehat{R}_{22}\mathbb E[u^*]+\rho_2=0.
\end{equation*}
\end{theorem}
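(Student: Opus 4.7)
The plan is a standard variational/duality argument adapted to the mean-field backward setting. Fix $u^{*}\in\mathcal U[0,T]$ with associated state $(Y^{*},Z^{*})$, and for an arbitrary $v\in\mathcal U[0,T]$ and $\varepsilon\in\mathbb R$ let $(Y^{\varepsilon},Z^{\varepsilon})$ be the state for $u^{*}+\varepsilon v$. Setting $Y^{v}:=(Y^{\varepsilon}-Y^{*})/\varepsilon$ and $Z^{v}:=(Z^{\varepsilon}-Z^{*})/\varepsilon$, the linearity of \eqref{station 1} shows that $(Y^{v},Z^{v})$ satisfies the homogeneous mean-field BSDE driven by $v$ with terminal value $0$. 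Because $J(\zeta;\cdot)$ is quadratic, a direct expansion yields the exact identity
\begin{equation*}
J(\zeta;u^{*}+\varepsilon v)=J(\zeta;u^{*})+\varepsilon\,\Lambda(v)+\varepsilon^{2}J_{0}(0;v),
\end{equation*}
in which $\Lambda(v)$ is linear in $v$ and $J_{0}(0;v)$ is, by definition, the cost of Problem (BLQ)$_{0}$ evaluated at terminal data $0$ and control $v$. The elementary first/second order optimality argument (taking $\varepsilon\to 0$ with $\pm v$) then shows that $u^{*}$ is optimal \emph{iff} $\Lambda(v)=0$ and $J_{0}(0;v)\ge 0$ for every $v\in\mathcal U[0,T]$. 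The second condition is precisely (i).

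The next step is to identify $\Lambda\equiv 0$ with the FBSDE/stationarity statement (ii) via duality. To this end I introduce the adjoint process $X^{*}$ as the unique solution of the forward mean-field SDE appearing in \eqref{eq: Hamiltonian}, driven by the already-fixed $(Y^{*},Z^{*},u^{*})$ and initialized at $X^{*}(0)=-GY^{*}(0)-\widehat{G}\mathbb E[Y^{*}(0)]-g$; existence and uniqueness of $X^{*}$ follow from Assumption $A1$. Applying It\^o's formula to $\langle X^{*},Y^{v}\rangle$ on $[0,T]$ and using $Y^{v}(T)=0$, the boundary term $-\mathbb E\langle X^{*}(0),Y^{v}(0)\rangle$ reproduces the $G$-, $\widehat{G}$- and $g$-contributions of $\Lambda(v)$, while the drift/diffusion terms, after repeatedly invoking the symmetrization identity $\mathbb E\langle M\mathbb E[\xi],\eta\rangle=\mathbb E\langle M^{\top}\mathbb E[\eta],\xi\rangle=\langle M\mathbb E[\xi],\mathbb E[\eta]\rangle$, cancel all remaining $Y^{v}$- and $Z^{v}$-contributions of $\Lambda(v)$. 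What survives is
\begin{equation*}
\Lambda(v)=\mathbb E\!\int_{0}^{T}\!\bigl\langle \Theta^{*}(t)+\rho_{2}(t),\,v(t)\bigr\rangle dt,
\end{equation*}
where $\Theta^{*}:=B^{\top}X^{*}+\widehat{B}^{\top}\mathbb E[X^{*}]+S_{2}^{\top}Y^{*}+\widehat{S}_{2}^{\top}\mathbb E[Y^{*}]+R_{12}^{\top}Z^{*}+\widehat{R}_{12}^{\top}\mathbb E[Z^{*}]+R_{22}u^{*}+\widehat{R}_{22}\mathbb E[u^{*}]$ is exactly the left-hand side of the stationarity condition in (ii) prior to adding $\rho_{2}$. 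Since $v$ ranges over $\mathcal U[0,T]$, the condition $\Lambda\equiv 0$ is equivalent to $\Theta^{*}+\rho_{2}=0$, $dt\otimes d\mathbb P$-a.e.

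Combining these computations closes both directions. For the ``only if'' part, optimality forces $\Lambda(v)=0$ and $J_{0}(0;v)\ge 0$; the state $(Y^{*},Z^{*})$ exists uniquely under Assumption $A1$, and $X^{*}$ is then produced uniquely as the solution of the forward SDE, so \eqref{eq: Hamiltonian} admits the required unique triple satisfying the stationarity identity. For the ``if'' part, the duality computation converts (ii) into $\Lambda\equiv 0$; combined with (i) and the quadratic decomposition this gives $J(\zeta;u^{*}+v)\ge J(\zeta;u^{*})$ for every $v\in\mathcal U[0,T]$, and since any admissible control has the form $u^{*}+v$ with $v:=u-u^{*}\in\mathcal U[0,T]$, $u^{*}$ is optimal. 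The main technical obstacle is the bookkeeping in the duality step: the mean-field coefficients $\widehat{A},\widehat{B},\widehat{C},\widehat{Q},\widehat{S}_{i},\widehat{R}_{ij}$ produce extra cross terms of the form $\mathbb E\langle \widehat{M}\mathbb E[\xi],\eta\rangle$ that must be matched, via the symmetrization identity, with the corresponding hatted coefficients in the adjoint drift, and it is precisely this matching that forces the particular form of the FBSDE in \eqref{eq: Hamiltonian}.
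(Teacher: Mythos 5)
Your proposal is correct and follows essentially the same route as the paper: a first/second-order variational expansion of the quadratic cost along $u^{*}+\varepsilon v$, giving the decomposition into a linear term $\Lambda(v)$ plus a multiple of $J_{0}(0;v)$, and then identifying $\Lambda(v)$ with the stationarity expression by applying It\^o's formula to $\langle X^{*},Y^{v}\rangle$ for the decoupled adjoint forward equation (the paper compresses this duality computation into the second equality of its displayed expansion, whereas you spell it out, including the symmetrization of the mean-field cross terms and the observation that the FBSDE is decoupled so uniqueness of $(X^{*},Y^{*},Z^{*})$ is automatic under Assumption $A1$). The only cosmetic discrepancy is the constant in front of the quadratic term ($\varepsilon^{2}$ versus the paper's $\tfrac{\varepsilon^{2}}{2}$), which is immaterial to the equivalence.
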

\begin{proof}
$u^*\in\mathcal U[0,T]$ is an optimal  control of  Problem (BLQ) if and only if
$$J(\zeta; u^*+\varepsilon u)-J(\zeta; u^*)\geq 0, \forall u\in\mathcal U[0,T], \varepsilon\in \mathbb R.$$
Let $(Y^\varepsilon, Z^\varepsilon)$   be the  solution to  system \eqref{station 1} corresponding to $u^{\varepsilon}=u^*+\varepsilon u$.  It is  clearly that $(Y^\varepsilon, Z^\varepsilon)=(Y^*, Z^*)+\varepsilon(\delta Y, \delta Z)$, where $(\delta Y, \delta Z)$ satisfies
\begin{equation*}
\left\{ \begin{aligned}
d\delta Y=&\Big(A\delta Y+\widehat{A}\mathbb E[\delta Y]+Bu+\widehat{B}\mathbb E[u]+C\delta Z+\widehat{C}\mathbb E[\delta Z]\Big)dt+\delta ZdW, \\
\delta Y(T)=&\ 0.
\end{aligned}\right.
\end{equation*}
Consequently, we have
\begin{equation*}
\begin{aligned}
&J(\zeta; u^{\varepsilon})-J(\zeta; u^*)
\\=&\ \frac{\varepsilon^2}{2}J_0(0; u)+\varepsilon\mathbb E[ \langle g, \delta Y(0)\rangle]
+\varepsilon\mathbb E[\langle G\delta Y(0), Y^*(0)\rangle]+\varepsilon\mathbb E[\langle \widehat{G}\mathbb E[\delta Y(0)],\mathbb E[Y^*(0)]\rangle]
\\&+\varepsilon\mathbb E\left[\int_{0}^T\left \langle \left(\begin{array}{ccc}
Q&S_1&S_2\\
S_1^{\top}&R_{11}& R_{12}\\S_2^{\top}& R_{12}^{\top}&R_{22}\end{array}
\right)\left(\begin{array}{ccc}
\delta Y\\
 \delta Z\\ u\end{array}
\right), \left(\begin{array}{ccc}
Y^*\\
 Z^*\\   u^*\end{array}
\right)\right\rangle dt\right]+\varepsilon
\mathbb E\left[\int_{0}^T\left\langle\left(\begin{array}{ccc}
q\\
 \rho_1\\  \rho_2\end{array}
\right), \left(\begin{array}{ccc}
\delta Y\\
 \delta Z\\   u\end{array}
\right)\right\rangle dt\right]
\\&+\varepsilon\mathbb E\left[\int_{0}^T\left \langle \left(\begin{array}{ccc}
\widehat{Q}&\widehat S_1&\widehat S_2\\
\widehat S_1^{\top}&\widehat R_{11}& \widehat R_{12}\\\widehat S_2^{\top}& \widehat R_{12}^{\top}&\widehat R_{22}\end{array}
\right)\left(\begin{array}{ccc}
\mathbb E[\delta Y]\\
 \mathbb E[\delta Z]\\ \mathbb E[u]\end{array}
\right), \left(\begin{array}{ccc}
\mathbb E[Y^*]\\
 \mathbb E[Z^*]\\   \mathbb E[u^*]\end{array}
\right)\right\rangle dt\right]
\\=&\ \varepsilon\mathbb E\left[\int_{0}^T\langle B^\top X^*+\widehat{B}^\top \mathbb E[X^*]+S_2^\top Y^*+\widehat{S}_2^\top \mathbb E[Y^*]+R_{12}^\top Z^*+\widehat{R}_{12}^\top \mathbb E[Z^*]+R_{22}u^*+\widehat{R}_{22}\mathbb E[u^*]+\rho_2, u\rangle\right]
\\&+\frac{\varepsilon^2}{2}J_0(0; u).
\end{aligned}
\end{equation*}
From the above arguments, it is easy to draw the conclusions.
\end{proof}

\section{The optimal control and Riccati equations}
From the above arguments, it is difficult to check whether Problem (BLQ) admits a unique optimal control when  $J_0(0; u)\geq 0, \forall u\in \mathcal U[0,T]$, due to the fact that the unique solvability of mean-field FBSDE  \eqref{eq: Hamiltonian} is hard  to tackle without the positive definiteness assumptions on the weighting matrices.
In the following, we introduce a condition slightly stronger than $J_0(0; u)\geq 0$, $\forall u\in \mathcal U[0,T]$, which enables us to overcome the challenge brought by the indefiniteness of weighting matrices.\\
\textbf{Assumption $A3$:} There exists a constant $\alpha>0$, such that
$$J_0(0, u)\geq\alpha \mathbb E\left[\int_{0}^T|u|^2dt\right],  \forall u\in \mathcal U[0,T].$$
Actually, Assumption $A3$ is also called the uniform convexity of cost functional, which is sufficient for  the existence and uniqueness of optimal control of Problem (BLQ). In this section, we will give a more   specific characterization of optimal control  and construct an explicit formula of the corresponding  optimal cost under Assumption $A3$ via two Riccati equations with terminal conditions, a mean-field BSDE.

\subsection{Connections with  LQ problems of  mean-field  forward systems}
Consider a controlled mean-field forward system
\begin{equation}\label{eq. Fstate}
\left\{\begin{aligned}
dX=&\ \big(AX+\widehat{A}\mathbb E[X]+Bu+\widehat{B}\mathbb E[u]+Cv+\widehat{C}\mathbb E[v]\big)dt+vdW,
\\X(0)=&\ \xi,  
\end{aligned}
\right.
\end{equation}
where $\xi$ is an $\mathcal F_{0}$-measurable random variable satisfying $\mathbb E[|\xi|^2]<\infty$. The control process is the pair $(u,v)\in  \mathcal L_{\mathbb F}^2(0, T;\mathbb{R}^m)\times\mathcal L_{\mathbb F}^2(0, T;\mathbb{R}^n)=\mathcal U[0, T]\times\mathcal V[0, T]$.
Introduce a quadratic cost functional
\begin{equation} \label{eq. Fcost}
\begin{aligned}
\mathcal{J}_{\lambda}(\xi;  u,  v)=&\frac{1}{2}\mathbb E\left[\lambda\langle X(T),  X(T)\rangle+\int_{0}^T\Bigg\langle\left(\begin{array}{ccc}
Q&S_{1}&S_{2}
\\S_{1}^\top&R_{11}&R_{12}
\\S_{2}^\top&R_{12}^\top&R_{22} \end{array}\right)\left(\begin{array}{c}X\\v\\u\end{array}\right),  \left(\begin{array}{c}X\\v\\u\end{array}\right)\Bigg\rangle dt\right.
\\&\left.+\int_{0}^T\Bigg\langle\left(\begin{array}{ccc}
\widehat{Q}&\widehat{S}_{1}&\widehat{S}_{2}
\\\widehat{S}_{1}^\top&\widehat{R}_{11}&\widehat{R}_{12}
\\\widehat{S}_{2}^\top&\widehat{R}_{12}^\top&\widehat{R}_{22} \end{array}\right)\left(\begin{array}{c} \mathbb E[X]\\\mathbb E[v]\\\mathbb E[u]\end{array}\right),  \left(\begin{array}{c}\mathbb E[X]\\\mathbb E[v]\\\mathbb E[u]\end{array}\right)\Bigg\rangle dt\right],
\end{aligned}
\end{equation}
where $\lambda>0$ is a constant parameter.
We pose an  LQ  problem associate with  system \eqref{eq. Fstate} and  cost functional \eqref{eq. Fcost} as follows.\\
\textbf{Problem (FLQ$_\lambda$)}: Find a $(u^*,  v^*)\in\mathcal U[0, T]\times\mathcal V[0, T]$, such that
\begin{equation*}
\mathcal{V}_{\lambda}(\xi)=\mathcal{J}_{\lambda}(\xi;  u^*,  v^*)=\inf_{(u,  v)\in\mathcal U[0, T]\times\mathcal V[0, T]}\mathcal{J}_{\lambda}(\xi;  u,  v).
\end{equation*}
We now give the following result, which indicates some connections between Problem (BLQ) and Problem (FLQ$_\lambda$).
\begin{theorem}\label{th: connection}
Let Assumptions $A1$-$A3$ hold. Then there exist   constants $\rho>0$, and $\lambda_0>0$, such that for any $\lambda\geq\lambda_0$,
$$\mathcal{J}_\lambda(0; u,  v)\geq\rho \mathbb E\left[\int_{0}^T(|u|^2+|v|^2)dt\right],   \forall (u, v)\in \mathcal U[0, T]\times\mathcal V[0, T].$$
If, in addition, $-G\geq 0$, $-\widehat G\geq 0$, then for $\lambda\geq\lambda_0$,
$$\mathcal{J}_\lambda(\xi;  u,  v)\geq \rho \mathbb E\left[\int_{0}^T(|u|^2+|v|^2)dt\right],   \forall (u, v)\in \mathcal U[0, T]\times\mathcal V[0, T].$$
\end{theorem}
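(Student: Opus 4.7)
The overall strategy is to exploit a one-to-one correspondence between trajectories of the forward system \eqref{eq. Fstate} and those of the BSDE \eqref{station 1}. Given $(u,v) \in \mathcal{U}[0,T] \times \mathcal{V}[0,T]$ and initial condition $\xi$, let $X$ be the corresponding forward state and set $\zeta := X(T)$. Since $(X, v)$ satisfies exactly the dynamics of \eqref{station 1} (with $f \equiv 0$) together with terminal $\zeta$ and control $u$, the uniqueness of mean-field BSDE solutions forces $(Y, Z) = (X, v)$ and $Y(0) = X(0) = \xi$. Matching the two cost functionals term by term then yields the identity
$$
\mathcal{J}_\lambda(\xi; u, v) = \tfrac{\lambda}{2}\mathbb{E}|\zeta|^2 + J_0(\zeta; u) - \tfrac{1}{2}\mathbb{E}\bigl[\langle G\xi, \xi\rangle + \langle \widehat{G}\mathbb{E}[\xi], \mathbb{E}[\xi]\rangle\bigr].
$$
The final bracket vanishes for $\xi = 0$ and is nonnegative when $-G, -\widehat{G} \geq 0$, so in both cases of the theorem it is enough to lower bound the quantity $\tfrac{\lambda}{2}\mathbb{E}|\zeta|^2 + J_0(\zeta; u)$.

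For this, I would expand $J_0(\zeta; u)$ using linearity of the BSDE: writing $(Y, Z) = (Y_1, Z_1) + (Y_2, Z_2)$, with $(Y_1, Z_1)$ the solution corresponding to $(\zeta, 0)$ and $(Y_2, Z_2)$ to $(0, u)$, the quadratic structure of $J_0$ gives $J_0(\zeta; u) = J_0(\zeta; 0) + 2\Phi(\zeta, u) + J_0(0; u)$ for some symmetric bilinear form $\Phi$. Standard a priori estimates for mean-field BSDEs with bounded coefficients supply $\mathbb{E}[\sup_t |Y_1|^2 + \int_0^T|Z_1|^2 dt] \leq C_1\mathbb{E}|\zeta|^2$ and $\mathbb{E}[\sup_t |Y_2|^2 + \int_0^T|Z_2|^2 dt] \leq C_2\mathbb{E}\int_0^T|u|^2 dt$; combined with the boundedness of the weights in Assumption $A2$ and Cauchy--Schwarz, these produce $|J_0(\zeta; 0)| \leq K_1\mathbb{E}|\zeta|^2$ and $|\Phi(\zeta, u)| \leq K_2(\mathbb{E}|\zeta|^2)^{1/2}\bigl(\mathbb{E}\int_0^T|u|^2 dt\bigr)^{1/2}$. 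Invoking Assumption $A3$ on $J_0(0; u)$ and Young's inequality on the cross term then yields a constant $K > 0$ with
$$
J_0(\zeta; u) \geq \tfrac{\alpha}{2}\mathbb{E}\int_0^T|u|^2 dt - K \mathbb{E}|\zeta|^2.
$$

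Inserting this into the reduced inequality leaves
$$
\mathcal{J}_\lambda(\xi; u, v) \geq M_\lambda\,\mathbb{E}|\zeta|^2 + \tfrac{\alpha}{2}\mathbb{E}\int_0^T|u|^2 dt, \qquad M_\lambda := \tfrac{\lambda}{2} - K,
$$
and the last step is to trade part of the $\mathbb{E}|\zeta|^2$ term for an $\mathbb{E}\int_0^T|v|^2 dt$ term via the BSDE estimate $\mathbb{E}\int_0^T|v|^2 dt = \mathbb{E}\int_0^T|Z|^2 dt \leq K_v\bigl(\mathbb{E}|\zeta|^2 + \mathbb{E}\int_0^T|u|^2 dt\bigr)$. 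I expect this to be the main technical obstacle: the naive substitution $\mathbb{E}|\zeta|^2 \geq K_v^{-1}\mathbb{E}\int_0^T|v|^2 dt - \mathbb{E}\int_0^T|u|^2 dt$ turns the coefficient of $\mathbb{E}\int_0^T|u|^2 dt$ negative as soon as $\lambda$ is large. The remedy is a convex-combination trick: split $M_\lambda\mathbb{E}|\zeta|^2 = \theta M_\lambda\mathbb{E}|\zeta|^2 + (1-\theta)M_\lambda\mathbb{E}|\zeta|^2$, discard the nonnegative first piece, and apply the rearrangement only to the second. Taking $\lambda_0 := 2K + \alpha/2$ and, for $\lambda \geq \lambda_0$, setting $1-\theta := \alpha/(4 M_\lambda) \in (0,1)$ leaves strictly positive multipliers in front of both $\mathbb{E}\int_0^T|v|^2 dt$ and $\mathbb{E}\int_0^T|u|^2 dt$, which yields the asserted estimate with $\rho = (\alpha/4)\min\{1, K_v^{-1}\}$.
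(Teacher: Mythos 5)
Your proposal is correct and follows essentially the same route as the paper: identify $(X,v)$ with the backward state $(Y,Z)$ for terminal datum $\zeta=X(T)$, split the backward state into a terminal-driven part and a control-driven part, apply the a priori BSDE estimates together with Assumption $A3$, and finally trade part of the $\mathbb E[|\zeta|^2]$ term for $\mathbb E\bigl[\int_0^T|v|^2dt\bigr]$ with a suitable choice of $\lambda_0$. Your bookkeeping via the exact identity $\mathcal J_\lambda(\xi;u,v)=\tfrac{\lambda}{2}\mathbb E[|\zeta|^2]+J_0(\zeta;u)-\tfrac12\mathbb E\bigl[\langle G\xi,\xi\rangle+\langle\widehat G\,\mathbb E[\xi],\mathbb E[\xi]\rangle\bigr]$ is a slightly cleaner packaging of the same computation (the paper instead carries the $\langle Gy_0(0),y_0(0)\rangle$ terms explicitly), but it is not a different argument.
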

\begin{proof}
For any $(u,v)\in \mathcal U[0,T]\times\mathcal V[0,T]$, let $X$ be the corresponding solution of \eqref{eq. Fstate}. It is obvious that $\zeta\triangleq X(T)\in \mathcal L_{\mathcal F_T}^2(\Omega; \mathbb R^n)$. Regard $(X, u)$ as the solution of
\begin{equation*}
\left\{\begin{aligned}
dX=&\ \big(AX+\widehat{A}\mathbb E[X]+Bu+\widehat{B}\mathbb E[u]+Cv+\widehat{C}\mathbb E[v]\big)dt+vdW,
\\X(T)=&\ \zeta.  
\end{aligned}
\right.
\end{equation*}
Further, we introduce the following two mean-field BSDEs:
\begin{equation*}
\left\{\begin{aligned}
dy_0=&\ \big(Ay_0+\widehat{A}\mathbb E[y_0]+Bu+\widehat{B}\mathbb E[u]+Cz_0+\widehat{C}\mathbb E[z_0]\big)dt+z_0dW,
\\y_{0}(T)=&\ 0, 
\end{aligned}
\right.
\end{equation*}
and
\begin{equation*}
\left\{\begin{aligned}
dy_1=&\ \big(Ay_1+\widehat{A}\mathbb E[y_1]+Cz_1+\widehat{C}\mathbb E[z_1]\big)dt+z_1dW,
\\y_{1}(T)=&\ \zeta.  
\end{aligned}
\right.
\end{equation*}
Thus we have $$X=y_0+y_1, v=z_0+z_1.$$
Denote
$$M=\left(\begin{array}{ccc}
Q&S_{1}&S_{2}
\\S_{1}^\top&R_{11}&R_{12}
\\S_{2}^\top&R_{12}^\top&R_{22} \end{array}\right),  \widehat M=\left(\begin{array}{ccc}
\widehat{Q}&\widehat{S}_{1}&\widehat{S}_{2}
\\\widehat{S}_{1}^\top&\widehat{R}_{11}&\widehat{R}_{12}
\\\widehat{S}_{2}^\top&\widehat{R}_{12}^\top&\widehat{R}_{22} \end{array}\right),  \theta_0=\left(\begin{array}{c}y_0\\z_0\\u\end{array}\right),  \theta_1=\left(\begin{array}{c}y_1\\z_1\\0\end{array}\right).$$
With these notations,
\begin{equation*}
\begin{aligned}
J_0(0;  u)=&\frac{1}{2}\mathbb E\left[\langle Gy_0(0),  y_0(0)\rangle+\langle \widehat{G}\mathbb E[y_0(0)],  \mathbb E[y_0(0)]\rangle\right.\\&\left.+\int_{0}^T\langle M\theta_0,  \theta_0\rangle dt+\int_{0}^T\big\langle \widehat{M}\mathbb E[\theta_0],  \mathbb E[\theta_0]\big\rangle dt\right].
\end{aligned}
\end{equation*}
Let $K >0$ be a constant, which is large enough such that $\max\{|M(t)|, |\widehat{M}(t)|\}\leq K$ for a. e. $t\in[0, T]$. We then further obtain
\begin{equation*}
\begin{aligned}
&\mathcal{J}_{\lambda}(\xi;  u,  v)\\=&\ \frac{1}{2}\mathbb E\left[\lambda\langle X(T),  X(T)\rangle+\int_{0}^T\left(\big\langle M (\theta_0+\theta_1),  \theta_0+\theta_1\big\rangle+\big\langle \widehat{M}\mathbb E[\theta_0+\theta_1],  \mathbb E[\theta_0+\theta_1]\big\rangle\right) dt\right]
\\=&\  J_0(0;  u)+\frac{1}{2}\mathbb E\Big[\lambda\langle X(T),  X(T)\rangle-\langle Gy_0(0),  y_0(0)\rangle-\langle \widehat{G}\mathbb E[y_0(0)],  \mathbb E[y_0(0)]\rangle\Big]
\\&+\frac{1}{2}\mathbb E\left[\int_{0}^T\Big(2\big\langle M\theta_0,  \theta_1\big\rangle+\big\langle M\theta_1,  \theta_1\big\rangle+2\big\langle \widehat{M}\mathbb E[\theta_0],  \mathbb E[\theta_1]\big\rangle+\big\langle \widehat{M}\mathbb E[\theta_1],  \mathbb E[\theta_1]\big\rangle\Big) dt\right]
\\\geq&\ J_0(0;  u)+\frac{1}{2}\mathbb E\Big[\lambda\langle X(T),  X(T)\rangle-\langle Gy_0(0),  y_0(0)\rangle-\langle \widehat{G}\mathbb E[y_0(0)],  \mathbb E[y_0(0)]\rangle\Big]
\\&-K\left\{(\mu+1)\mathbb E\left[\int_{0}^T|\theta_1|^2dt\right]+\frac{1}{\mu}\mathbb E\left[\int_{0}^T|\theta_0|^2dt\right]\right\},
\end{aligned}
\end{equation*}
where  $\mu>0$ is a constant to be determined later.
According to Theorem 2.1 in Li et. al. \cite{LSX2019},
$$\mathbb E\left[\int_{0}^T|\theta_0|^2dt\right]\leq K\mathbb E\left[\int_{0}^T|u|^2dt\right], \ \  \mathbb E\left[\int_{0}^T|\theta_1|^2dt\right]\leq K\mathbb E\left[|\zeta|^2\right]=K\mathbb E\left[|X(T)|^2\right].$$
Thus, we have
\begin{equation*}
\begin{aligned}
\mathbb E\left[\int_{0}^T|v|^2dt\right]&\leq\mathbb E\left[\int_{0}^T2(|z_0|^2+|z_1|^2)dt\right]
\\&\leq 2\mathbb E\left[\int_{0}^T|\theta_0|^2dt\right]+2\mathbb E\left[\int_{0}^T|\theta_1|^2dt\right]
\\&\leq 2K\mathbb E\left[\int_{0}^T|u|^2dt\right]+2K\mathbb E\left[|X(T)|^2\right].
\end{aligned}
\end{equation*}
Moreover, if $\xi=0$, we further have
$$\mathbb E\Big[\langle Gy_0(0),  y_0(0)\rangle\Big]=\mathbb E [\langle Gy_1(0),  y_1(0)\rangle\Big]\leq K \mathbb E\left[|X(T)|^2\right], $$
and
$$\langle \widehat{G}\mathbb E[y_0(0)],  \mathbb E[y_0(0)]\rangle=\langle \widehat{G}\mathbb E[y_1(0)],  \mathbb E[y_1(0)]\rangle\leq K\mathbb E\left[|X(T)|^2\right],$$
by Theorem 2.1 in Li et. al. \cite{LSX2019}.
Combining the above equations and letting $\mu=\frac{2K^2}{\alpha}, \lambda\geq\lambda_0=\frac{\alpha}{2}+2K^2(\mu+1)+2K$, we derive
\begin{equation*}
\begin{aligned}
\mathcal{J}_{\lambda}(\xi;  u,  v)\geq&
\frac{1}{2}\left[\lambda-2K^2(\mu+1)\right]\mathbb E\left[|X(T)|^2\right]+\left(\alpha-\frac{K^2}{\mu}\right)\mathbb E\left[\int_{0}^T|u|^2dt\right]
\\&-\frac{1}{2}\mathbb E\Big[\langle Gy_0(0),  y_0(0)\rangle+\langle \widehat{G}\mathbb E[y_0(0)],  \mathbb E[y_0(0)]\rangle\Big]
\\\geq&\frac{\alpha}{4}\mathbb E\left[|X(T)|^2\right]+\frac{\alpha}{2}\mathbb E\left[\int_{0}^T|u|^2dt\right]+K\mathbb E\left[|X(T)|^2\right]
\\&-\frac{1}{2}\mathbb E\Big[\langle Gy_0(0),  y_0(0)\rangle+\langle \widehat{G}\mathbb E[y_0(0)],  \mathbb E[y_0(0)]\rangle\Big]
\\\geq&\frac{\alpha}{8K}\mathbb E\left[\int_{0}^T|v|^2dt\right]+\frac{\alpha}{4}\mathbb E\left[\int_{0}^T|u|^2dt\right]+K\mathbb E\left[|X(T)|^2\right]
\\&-\frac{1}{2}\mathbb E\Big[\langle Gy_0(0),  y_0(0)\rangle+\langle \widehat{G}\mathbb E[y_0(0)],  \mathbb E[y_0(0)]\rangle\Big].
\end{aligned}
\end{equation*}
If  $\xi=0$, it is obvious that
\begin{equation*}
\begin{aligned}
\mathcal{J}_{\lambda}(0;  u,  v)\geq &\frac{\alpha}{8K}\mathbb E\left[\int_{0}^T|v|^2dt\right]+\frac{\alpha}{4}\mathbb E\left[\int_{0}^T|u|^2dt\right].
\end{aligned}
\end{equation*}
Further, if $-G\geq 0$,  $-\widetilde G\geq 0$,
\begin{equation*}
\begin{aligned}
\mathcal{J}_{\lambda}(\xi;  u,  v)\geq&\frac{\alpha}{8K}\mathbb E\left[\int_{0}^T|v|^2dt\right]+\frac{\alpha}{4}\mathbb E\left[\int_{0}^T|u|^2dt\right]+K\mathbb E\left[|X(T)|^2\right].
\end{aligned}
\end{equation*}
The proof is completed.
\end{proof}
\begin{corollary}\label{coro: 1}
Let Assumptions $A1$-$A3$ hold. Problem (FLQ$_\lambda$) is uniquely solvable for $\lambda\geq \lambda_0$. If, in addition, $-G\geq 0$,  $-\widetilde G\geq 0$, then for $\lambda\geq\lambda_0$,
$$\mathcal{V}_{\lambda}(\xi)\geq0.$$
\end{corollary}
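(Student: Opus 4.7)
The plan is to derive both statements directly from the two inequalities established in Theorem \ref{th: connection}. The first one will yield unique solvability via a standard Hilbert-space minimization argument, while the second will give nonnegativity of the value function simply by passing to the infimum.

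For the solvability part, I would view $(u, v) \mapsto \mathcal{J}_\lambda(\xi; u, v)$ as a quadratic functional on the Hilbert space $\mathcal{H} := \mathcal{U}[0,T] \times \mathcal{V}[0,T]$. Because the state equation \eqref{eq. Fstate} is affine in $(\xi, u, v)$, one splits $X = X_0 + X_\xi$, where $X_0$ is driven by $(u, v)$ from zero initial data and $X_\xi$ is generated by the initial datum $\xi$ with zero control. Substituting this decomposition into \eqref{eq. Fcost} and expanding every quadratic term yields
\begin{equation*}
\mathcal{J}_\lambda(\xi; u, v) = \mathcal{J}_\lambda(0; u, v) + L_\xi(u, v) + c(\xi),
\end{equation*}
where $L_\xi : \mathcal{H} \to \mathbb{R}$ is a continuous linear functional depending on $\xi$ and $c(\xi) = \mathcal{J}_\lambda(\xi; 0, 0)$ is a constant depending only on $\xi$. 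By Theorem \ref{th: connection}, for $\lambda \geq \lambda_0$ the quadratic part obeys
\begin{equation*}
\mathcal{J}_\lambda(0; u, v) \geq \rho \, \mathbb{E}\left[\int_0^T (|u|^2 + |v|^2)\, dt\right] = \rho \, \|(u, v)\|_{\mathcal{H}}^2,
\end{equation*}
so $\mathcal{J}_\lambda(\xi; \cdot, \cdot)$ is continuous, strictly convex, and coercive on $\mathcal{H}$. The direct method of the calculus of variations (or equivalently Lax--Milgram applied to the first-order optimality condition) then produces a unique minimizer $(u^*, v^*) \in \mathcal{H}$.

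For the nonnegativity claim, under the additional assumptions $-G \geq 0$ and $-\widehat{G} \geq 0$ the second inequality of Theorem \ref{th: connection} gives, for every $(u, v) \in \mathcal{H}$ and every $\lambda \geq \lambda_0$,
\begin{equation*}
\mathcal{J}_\lambda(\xi; u, v) \geq \rho \, \mathbb{E}\left[\int_0^T (|u|^2 + |v|^2)\, dt\right] \geq 0.
\end{equation*}
Taking the infimum over $(u, v)$ immediately yields $\mathcal{V}_\lambda(\xi) \geq 0$. No substantial obstacle arises: Theorem \ref{th: connection} already contains all of the nontrivial estimates, and this corollary merely repackages them as statements about well-posedness and the sign of the value function.
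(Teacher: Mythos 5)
Your proof is correct and is essentially the argument the paper intends: the corollary is stated without proof as an immediate consequence of Theorem \ref{th: connection}, and your decomposition of $\mathcal{J}_\lambda(\xi;\cdot,\cdot)$ into a uniformly convex quadratic part $\mathcal{J}_\lambda(0;\cdot,\cdot)$ plus a bounded linear functional and a constant, followed by the direct method, is the standard way to fill it in. The nonnegativity claim indeed follows by taking the infimum in the second inequality of Theorem \ref{th: connection}.
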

\begin{corollary}\label{coro: 2}
Let Assumptions $A1$-$A3$ hold. Then for any $\lambda\geq \lambda_0$,  Riccati equations
\begin{equation}\label{eq: Pi riccati1}
\left\{\begin{aligned}&\dot{\Pi}_{\lambda}+\Pi_{\lambda}A+A^\top \Pi_{\lambda}+Q\\&\ \ -\left(\begin{array}{c}
C^\top\Pi_{\lambda}+S_1^\top\\B^\top\Pi_{\lambda}+S_2^\top\end{array}\right)^\top\left(\begin{array}{cc}
R_{11}+\Pi_{\lambda}&R_{12}\\R_{12}^\top&R_{22}\end{array}\right)^{-1}\left(\begin{array}{c}
C^\top \Pi_{\lambda}+S_1^\top\\B^\top \Pi_{\lambda}+S_2^\top\end{array}\right)=0,\\&\Pi_{\lambda}(T)=\lambda I,
\end{aligned}\right.
\end{equation}
and
\begin{equation}\label{eq: Pi riccati2}
\left\{\begin{aligned}&\dot{\widetilde{\Pi}}_{\lambda}+\widetilde{\Pi}_{\lambda}\widetilde{A}+\widetilde{A}^\top \widetilde{\Pi}_{\lambda}+\widetilde{Q}\\&\ \ -\left(\begin{array}{c}
\widetilde{C}^\top\widetilde{\Pi}_{\lambda}+\widetilde{S}_1^\top\\\widetilde{B}^\top \widetilde{\Pi}_{\lambda}+\widetilde{S}_2^\top\end{array}\right)^\top\left(\begin{array}{cc}
\widetilde{R}_{11}+\Pi_{\lambda}&\widetilde{R}_{12}\\\widetilde{R}_{12}^\top&\widetilde{R}_{22}\end{array}\right)^{-1}\left(\begin{array}{c}
\widetilde{C}^\top \widetilde{\Pi}_{\lambda}+\widetilde{S}_1^\top\\\widetilde{B}^\top \widetilde{\Pi}_{\lambda}+\widetilde{S}_2^\top\end{array}\right)=0, \\&\widetilde{\Pi}_{\lambda}(T)=\lambda I
\end{aligned}
\right.
\end{equation}
admit unique solutions $\Pi_{\lambda},  \widetilde{\Pi}_{\lambda}\in \mathcal L^\infty(0, T;\mathbb S^n)$, respectively, such that
\begin{equation}\label{eq. weight1}
\left(\begin{array}{cc}
R_{11}+\Pi_{\lambda}&R_{12}\\R_{12}^\top&R_{22}\end{array}\right)\gg 0, \  \left(\begin{array}{cc}
\widetilde{R}_{11}+\Pi_{\lambda}&\widetilde{R}_{12}\\\widetilde{R}_{12}^\top&\widetilde{R}_{22}\end{array}\right)\gg 0.\end{equation}
Further, we have  $$\mathcal{V}_{\lambda}(\xi)=\frac{1}{2}\mathbb E\Big[\langle\Pi_{\lambda}(0)(\xi-\mathbb E[\xi]),  \xi-\mathbb E[\xi]\rangle+\langle \widetilde{\Pi}_{\lambda}(0)\mathbb E[\xi],  \mathbb E[\xi]\rangle\Big].
$$
\end{corollary}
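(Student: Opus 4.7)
The plan is to reduce Problem (FLQ$_\lambda$) to two standard (non--mean-field) forward LQ problems via the mean--fluctuation decomposition, derive uniform convexity of each subproblem from Theorem~\ref{th: connection}, and then invoke a known indefinite LQ solvability result to produce $\Pi_\lambda$ and $\widetilde{\Pi}_\lambda$ with the positivity~\eqref{eq. weight1}. Concretely, set $\bar X=\mathbb E[X]$, $\tilde X=X-\bar X$, $\bar u=\mathbb E[u]$, $\tilde u=u-\bar u$ (and analogously for $v$). Then $\bar X$ satisfies a deterministic ODE with coefficients $\widetilde A,\widetilde B,\widetilde C$ driven by $(\bar u,\bar v)$, while $\tilde X$ satisfies an SDE with coefficients $A,B,C$ driven by $(\tilde u,\tilde v)$ and initial condition $\xi-\mathbb E[\xi]$. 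Using the identity $\mathbb E\langle M X,X\rangle+\langle\widehat M\bar X,\bar X\rangle=\mathbb E\langle M\tilde X,\tilde X\rangle+\langle\widetilde M\bar X,\bar X\rangle$ for $M$, $\widehat M$ symmetric (applied blockwise to the weight matrices) and the analogous identity for the terminal penalty $\lambda|X(T)|^2$, the cost functional splits as
\begin{equation*}
\mathcal J_\lambda(\xi;u,v)=\mathcal J_\lambda^{f}(\xi-\mathbb E[\xi];\tilde u,\tilde v)+\mathcal J_\lambda^{m}(\mathbb E[\xi];\bar u,\bar v),
\end{equation*}
where $\mathcal J_\lambda^{f}$ is a classical stochastic LQ cost with coefficients $(A,B,C)$ and weights $(Q,S,R)$, and $\mathcal J_\lambda^{m}$ is a deterministic LQ cost with coefficients $(\widetilde A,\widetilde B,\widetilde C)$ and weights $(\widetilde Q,\widetilde S,\widetilde R)$.

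Next I would transfer the uniform convexity from Theorem~\ref{th: connection} to each subproblem. Plugging $\xi=0$ and a control $(u,v)$ with $\mathbb E[u]=\mathbb E[v]=0$ into Theorem~\ref{th: connection} yields $\mathcal J_\lambda^{f}(0;\tilde u,\tilde v)\ge\rho\,\mathbb E\int_0^T(|\tilde u|^2+|\tilde v|^2)dt$ for all centered $(\tilde u,\tilde v)$. Plugging in deterministic $(u,v)$ yields $\mathcal J_\lambda^{m}(0;\bar u,\bar v)\ge\rho\int_0^T(|\bar u|^2+|\bar v|^2)dt$ for all deterministic $(\bar u,\bar v)$. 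Each of these is a standard (non--mean-field) indefinite forward LQ problem that is uniformly convex in its control pair for $\lambda\ge\lambda_0$.

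Now I invoke the known equivalence, in the setting of indefinite forward stochastic LQ problems (see, e.g., \cite{SWX2019,SWX2022} which underpins the reference \cite{SWX2023} already cited in the paper), between uniform convexity of the cost and unique solvability of the Riccati equation in $\mathcal L^\infty(0,T;\mathbb S^n)$ with the uniform positivity of the block coefficient of the quadratic term in $(v,u)$. Applied to $\mathcal J_\lambda^{f}$ this produces $\Pi_\lambda$ satisfying \eqref{eq: Pi riccati1} together with the first inequality of \eqref{eq. weight1}; applied to $\mathcal J_\lambda^{m}$ it produces $\widetilde\Pi_\lambda$ satisfying \eqref{eq: Pi riccati2} together with the second. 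The fact that $\Pi_\lambda$ (and not $\widetilde\Pi_\lambda$) also appears inside the weight matrix of \eqref{eq: Pi riccati2} is harmless: the mean subproblem is deterministic, so $v$ is only a nuisance variable whose cost weight is $\widetilde R_{11}+\Pi_\lambda$ inherited from the fluctuation problem via the decomposition of $\langle R_{11}v,v\rangle+\langle\widehat R_{11}\bar v,\bar v\rangle$, and uniform positivity there is exactly the positivity already obtained for $\Pi_\lambda$.

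Finally, I would derive the value formula by applying It\^o's formula to $\langle\Pi_\lambda\tilde X,\tilde X\rangle$ and the chain rule to $\langle\widetilde\Pi_\lambda\bar X,\bar X\rangle$ along the dynamics of $\tilde X$ and $\bar X$, substituting the Riccati identities \eqref{eq: Pi riccati1}--\eqref{eq: Pi riccati2}, and completing the squares in $(\tilde u,\tilde v)$ and $(\bar u,\bar v)$ using the uniformly positive weights in \eqref{eq. weight1}. The square terms are non-negative and vanish at the unique minimizer, leaving exactly $\tfrac12\mathbb E[\langle\Pi_\lambda(0)(\xi-\mathbb E[\xi]),\xi-\mathbb E[\xi]\rangle+\langle\widetilde\Pi_\lambda(0)\mathbb E[\xi],\mathbb E[\xi]\rangle]$. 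The main obstacle is the middle step: carefully verifying that the mean--fluctuation decomposition distributes the mixed quadratic form correctly so that the reduced problems are truly standard forward LQ problems with the weights claimed, and that the uniform-convexity-to-Riccati theorem indeed applies to the deterministic mean subproblem (which is simpler, but the decomposition of the control pair into $(\bar u,\bar v)$ versus $(\tilde u,\tilde v)$ must be handled cleanly so that the two positivity conditions in \eqref{eq. weight1} line up with the two Riccati equations).
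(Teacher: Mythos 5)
The paper states this corollary without proof, as a consequence of Theorem~\ref{th: connection} together with known solvability results for uniformly convex mean-field forward LQ problems, so your attempt to prove it from scratch via the mean--fluctuation decomposition is in the right spirit. However, the central splitting step contains a genuine gap. Taking expectations of the state equation~\eqref{eq. Fstate} gives $d\bar X=(\widetilde A\bar X+\widetilde B\bar u+\widetilde C\bar v)dt$ and $d\tilde X=(A\tilde X+B\tilde u+C\tilde v)dt+v\,dW$ with $v=\tilde v+\bar v$: the diffusion of the fluctuation state involves $\bar v$, not $\tilde v$ alone. Consequently the claimed identity $\mathcal J_\lambda(\xi;u,v)=\mathcal J_\lambda^{f}(\xi-\mathbb E[\xi];\tilde u,\tilde v)+\mathcal J_\lambda^{m}(\mathbb E[\xi];\bar u,\bar v)$ is false: the terms $\mathbb E[\lambda|\tilde X(T)|^2]$ and $\mathbb E\int\langle Q\tilde X,\tilde X\rangle dt$ depend on $\bar v$ through the noise. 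In particular, plugging deterministic $(u,v)$ into Theorem~\ref{th: connection} bounds the \emph{full} cost (which still contains a nontrivial, possibly sign-indefinite fluctuation contribution driven by $\bar v\,dW$), not your purely deterministic functional with weights $(\widetilde Q,\widetilde S,\widetilde R)$; the latter need not be convex at all when $\widetilde R_{11}$ is very negative. This is precisely why $\Pi_\lambda$, and not $\widetilde\Pi_\lambda$, sits inside the weight of~\eqref{eq: Pi riccati2}: it enters through the quadratic-variation term $\langle\Pi_\lambda v,v\rangle=\langle\Pi_\lambda\tilde v,\tilde v\rangle+\langle\Pi_\lambda\bar v,\bar v\rangle$ when you apply It\^o's formula to $\langle\Pi_\lambda\tilde X,\tilde X\rangle$, not (as you write) from the decomposition of $\langle R_{11}v,v\rangle+\langle\widehat R_{11}\bar v,\bar v\rangle$, which contributes no $\Pi_\lambda$ at all.

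The argument can be repaired, but the order of operations matters and must be made explicit. First restrict to $\mathbb E[\xi]=0$ and centered controls (where $\bar X\equiv 0$ and the diffusion is genuinely $\tilde v$), obtain uniform convexity of the resulting classical LQ problem, and solve~\eqref{eq: Pi riccati1} with the first positivity in~\eqref{eq. weight1}. Then, with $\Pi_\lambda$ in hand, complete the square in the fluctuation variables for a \emph{general} control; the residual $\int_0^T\langle\Pi_\lambda\bar v,\bar v\rangle dt$ migrates into the mean problem, whose effective running weight in $(\bar v,\bar u)$ is the second matrix in~\eqref{eq. weight1}. Only this corrected deterministic functional inherits the lower bound $\rho\int_0^T(|\bar u|^2+|\bar v|^2)dt$ from Theorem~\ref{th: connection} (because for deterministic controls and deterministic $\xi$ the fluctuation square terms vanish identically), and only then does the deterministic LQ theory yield~\eqref{eq: Pi riccati2}. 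You should also note that the classical solvability theorem you invoke requires uniform convexity over all of $\mathcal L^2_{\mathbb F}$, whereas your reduction only establishes it on the centered subspace; the restriction-to-centered-controls formulation above avoids this mismatch. The final value-function computation by completion of squares is fine once these pieces are in place.
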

\begin{remark}
From \eqref{eq. weight1} and Schur Lemma \cite{BGFB1994}, we can see that $R_{22}\gg 0,  \widetilde R_{22}\gg0$ under Assumption $A3$. These  features are  quite different from mean-field LQ problems of forward stochastic systems, where the uniform positive definiteness of control weighting matrices are neither necessary nor  sufficient for  the uniform convexity of cost functional (see  \cite{LSY2016,sun2017}).
\end{remark}
\subsection{Reductions of Problem (BLQ)}

Based on the above arguments, we make some reductions of Problem (BLQ).  For this end, we introduce a controlled system
\begin{equation*}
\left\{\begin{aligned}
dY_0=&\ \left(AY_0+\widehat{A}\mathbb E[Y_0]+Bu_0+\widehat{B}\mathbb E[u_0]+\mathcal{C}Z_0+\widehat{\mathcal{C}}\mathbb E[Z_0]+f\right)dt+Z_0dW,
\\Y_0(T)=&\ \zeta,  
\end{aligned}
\right.
\end{equation*}
and a cost functional
\begin{equation*}
\begin{aligned}
\widetilde{J}(\zeta;  u_0)=&\frac{1}{2}\mathbb E\left[ 2\langle g,  Y_0(0)\rangle+2\int_{0}^T\Bigg\langle\left(\begin{array}{c}\widetilde{q}\\\widetilde{\rho}_1\\\rho_2\end{array}\right),  \left(\begin{array}{c}Y_0\\Z_0\\u_0\end{array}\right)\Bigg\rangle dt
\right.\\&+\int_{0}^T\Bigg\langle\left(\begin{array}{ccc}
0&\mathcal{S}_{1}&\mathcal{S}_{2}
\\\mathcal{S}_{1}^\top&\mathcal{R}_{11}&0
\\\mathcal{S}_{2}^\top&0&R_{22} \end{array}\right)\left(\begin{array}{c}Y_0\\Z_0\\u_0\end{array}\right),  \left(\begin{array}{c}Y_0\\Z_0\\u_0\end{array}\right)\Bigg\rangle dt
\\&\left.+\int_{0}^T\Bigg\langle\left(\begin{array}{ccc}
0&\widehat{\mathcal{S}}_{1}&\widehat{\mathcal{S}}_{2}
\\\widehat{\mathcal{S}}_{1}^\top&\widehat{\mathcal{R}}_{11}&0
\\\widehat{\mathcal{S}}_{2}^\top&0&\widehat{R}_{22} \end{array}\right)\left(\begin{array}{c}\mathbb E[Y_0]\\\mathbb E[Z_0]\\\mathbb E[u_0]\end{array}\right),  \left(\begin{array}{c}\mathbb E[Y_0]\\\mathbb E[Z_0]\\\mathbb E[u_0]\end{array}\right)\Bigg\rangle dt\right],
\end{aligned}
\end{equation*}
where
\begin{equation*}
\left\{\begin{aligned}
&\widetilde{q}=q+\Phi f+(\widetilde{\Phi}-\Phi) \mathbb E[f], \ \widetilde{\rho}_1=\rho_1-R_{12}R_{22}^{-1}\rho_2-(\widetilde{R}_{12}\widetilde{R}_{22}^{-1}-R_{12}R_{22}^{-1})\mathbb E[\rho_2],\\
&\mathcal C=C-BR_{22}^{-1}R_{12}^\top, \ \widetilde{\mathcal C}=\mathcal C+ \widehat {\mathcal C} =\widetilde{C}-\widetilde{B}\widetilde{R}_{22}^{-1}\widetilde{R}_{12}^\top,
\\&\mathcal S_1=S_1-S_2R_{22}^{-1}R_{12}^\top+\Phi\mathcal{C},
\ \widetilde{\mathcal S}_1=\mathcal S_1+\widehat{\mathcal S}_1=\widetilde{S}_1-\widetilde{S}_2\widetilde{R}_{22}^{-1}\widetilde{R}_{12}^\top
+\widetilde{\Phi}\widetilde{\mathcal{C}},
\\& \mathcal S_2=S_2+\Phi B, \ \widetilde{\mathcal S}_2=\mathcal S_2+\widehat{\mathcal S}_2=\widetilde{S}_2+\widetilde{\Phi} \widetilde{B},
\\& \mathcal R_{11}=R_{11}-R_{12}R_{22}^{-1}R_{12}^\top+\Phi,
\ \widetilde{\mathcal R}_{11}=\mathcal R_{11}+\widehat{\mathcal R}_{11}=\widetilde{R}_{11}-\widetilde{R}_{12}\widetilde{R}_{22}^{-1}\widetilde{R}_{12}^\top+\Phi,
\end{aligned}\right.
\end{equation*}
with $\Phi$ and $\widetilde{\Phi}$ being the solutions of
\begin{equation*}
\left\{\begin{aligned}
&\dot{\Phi}+\Phi A+A^\top \Phi+Q=0
\\&\Phi(0)=-G,
\end{aligned}\right.
\end{equation*}
and
\begin{equation*}
\left\{\begin{aligned}
&\dot{\widetilde{\Phi}}+\widetilde{\Phi} \widetilde{A}+\widetilde{A}^\top \widetilde{\Phi}+\widetilde{Q}=0
\\&\widetilde{\Phi}(0)=-\widetilde{G},
\end{aligned}\right.
\end{equation*}
respectively.
The corresponding  stochastic optimal control problem is  stated as follows.
\\\textbf{Problem (NC-BLQ)}. Find a $u_0^*\in \mathcal U[0,T]$ such that
\begin{equation*}
\widetilde{V}(\zeta)=\widetilde{J}(\zeta;  u_0^*)=\inf_{u_0\in \mathcal U[0, T]}\widetilde{J}(\zeta;  u_0).
\end{equation*}
Here, ``NC" implies that the  cross-product term  of   $u_0$ and $Z_0$  does not appear in  $\widetilde{J}(\zeta;  u_0)$.
\begin{theorem}\label{control system lemma}
 Let Assumptions $A1$-$A3$ hold. For any two pairs $(Y, Z,u)$ and $(Y_0,Z_0, u_0)$, we introduce a linear transformation
\begin{equation}\label{linear   trans1}
\left(\begin{array}{c}
Y_0-\mathbb E [Y_0]\\
Z_0-\mathbb E [Z_0]\\u_0-\mathbb E [u_0]\end{array}\right)=\left(\begin{array}{ccc}
I&0&0\\
0&I&0\\
0&R_{22}^{-1}R_{12}^\top&I\end{array}\right)\left(\begin{array}{c}
 Y-\mathbb E [Y]\\
Z-\mathbb E [Z]\\u-\mathbb E [u]\end{array}\right),
\end{equation}
 \begin{equation}\label{linear   trans2}
\left(\begin{array}{c}
\mathbb E [Y_0]\\
\mathbb E[Z_0]\\\mathbb E[u_0]\end{array}\right)=\left(\begin{array}{ccc}
I&0&0\\
0&I&0\\
0&\widetilde{R}_{22}^{-1}\widetilde{R}_{12}^\top&I\end{array}\right)\left(\begin{array}{c}
\mathbb E [Y]\\
\mathbb E [Z]\\\mathbb E [u]\end{array}\right).
\end{equation}
Then the following two statements are equivalent:
\begin{itemize}
\item[(1):] $(Y, Z,u)$ is an admissible (optimal) triple of Problem (BLQ).
\item[(2):] $(Y_0, Z_0, u_0)$ is an admissible (optimal) triple of Problem (NC-BLQ).
\end{itemize}
Moreover, we have  $$J(\zeta;  u)=\widetilde{J}(\zeta;  u_0)-\frac{1}{2}\mathbb E\left[\langle\Phi(T)(\zeta-\mathbb E[\zeta]),  \zeta-\mathbb E[\zeta]\rangle +\langle\widetilde{\Phi}(T)\mathbb E[\zeta], \mathbb E[\zeta]\rangle\right].$$
\end{theorem}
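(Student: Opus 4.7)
The plan is to proceed in two stages: (i) show that the given transformation is a bijection between admissible triples of Problem (BLQ) and Problem (NC-BLQ), and (ii) establish the explicit cost identity, from which the equivalence of optimality follows immediately since the discrepancy between $J(\zeta;u)$ and $\widetilde{J}(\zeta;u_0)$ depends only on $\zeta$ and not on the control.

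For stage (i), observe that the transformation yields $Y_0 \equiv Y$ and $Z_0 \equiv Z$ pathwise, while
\[
u_0 = u + R_{22}^{-1} R_{12}^\top (Z - \mathbb{E}[Z]) + \widetilde{R}_{22}^{-1} \widetilde{R}_{12}^\top \mathbb{E}[Z].
\]
Substituting into the NC-BLQ state equation and using
\[
\mathcal{C} Z + \widehat{\mathcal{C}}\mathbb{E}[Z] = C Z + \widehat{C}\mathbb{E}[Z] - B R_{22}^{-1} R_{12}^\top(Z-\mathbb{E}[Z]) - \widetilde{B}\widetilde{R}_{22}^{-1}\widetilde{R}_{12}^\top \mathbb{E}[Z],
\]
the added terms cancel exactly with the corresponding contributions of $B u_0 + \widehat{B}\mathbb{E}[u_0]$, reducing the NC-BLQ equation to the BLQ equation with the same terminal value $\zeta$. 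Since the transformation is invertible (the matrices are block-triangular with identity on the diagonal and $R_{22}^{-1}$, $\widetilde{R}_{22}^{-1}$ bounded by Corollary~\ref{coro: 2}), admissibility is preserved in both directions.

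For stage (ii), I would apply It\^o's formula to $\langle \Phi(t)(Y - \mathbb{E}[Y]), Y - \mathbb{E}[Y]\rangle$ and to the deterministic function $\langle \widetilde{\Phi}(t)\mathbb{E}[Y], \mathbb{E}[Y]\rangle$. The ODEs for $\Phi$ and $\widetilde{\Phi}$ cancel the $A$- and $\widetilde{A}$-induced drifts, and integration from $0$ to $T$ yields, on the boundary side, the contribution
\[
-\mathbb{E}[\langle G(Y(0) - \mathbb{E}[Y(0)]), Y(0) - \mathbb{E}[Y(0)]\rangle] - \langle \widetilde{G}\mathbb{E}[Y(0)], \mathbb{E}[Y(0)]\rangle + \mathbb{E}[\langle \Phi(T)(\zeta - \mathbb{E}[\zeta]), \zeta - \mathbb{E}[\zeta]\rangle] + \langle \widetilde{\Phi}(T)\mathbb{E}[\zeta], \mathbb{E}[\zeta]\rangle,
\]
together with an integrated dt-term containing $\langle \Phi Z, Z\rangle$, the cross product of $\Phi(Y-\mathbb{E}[Y])$ with $B(u-\mathbb{E}[u]) + C(Z-\mathbb{E}[Z]) + (f-\mathbb{E}[f])$, and an analogous term for $\widetilde{\Phi}$ on expectations. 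Adding these to $J(\zeta;u)$ and regrouping, the key algebraic identity is the block factorization
\[
\begin{pmatrix} R_{11}+\Phi & R_{12} \\ R_{12}^\top & R_{22} \end{pmatrix} = \begin{pmatrix} I & R_{12}R_{22}^{-1} \\ 0 & I \end{pmatrix}\begin{pmatrix} \mathcal{R}_{11} & 0 \\ 0 & R_{22} \end{pmatrix}\begin{pmatrix} I & 0 \\ R_{22}^{-1}R_{12}^\top & I \end{pmatrix},
\]
together with its tilded counterpart. The change of variable $u \mapsto u_0$ from stage (i) is precisely the unipotent factor that diagonalizes this block, eliminating the $R_{12}$ cross term, reproducing the coefficients $\mathcal{R}_{11}$, $\mathcal{S}_1$, $\mathcal{S}_2$ (and their tilded analogs), while the linear residues collect into exactly $\widetilde{q} = q + \Phi f + (\widetilde{\Phi} - \Phi)\mathbb{E}[f]$ and $\widetilde{\rho}_1 = \rho_1 - R_{12}R_{22}^{-1}\rho_2 - (\widetilde{R}_{12}\widetilde{R}_{22}^{-1} - R_{12}R_{22}^{-1})\mathbb{E}[\rho_2]$. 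What remains at the boundary is precisely the stated correction $-\tfrac12\mathbb{E}[\langle \Phi(T)(\zeta - \mathbb{E}[\zeta]), \zeta - \mathbb{E}[\zeta]\rangle + \langle \widetilde{\Phi}(T)\mathbb{E}[\zeta], \mathbb{E}[\zeta]\rangle]$.

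The main obstacle is the careful bookkeeping in this last step: one must simultaneously track the mean-zero and expectation pieces (governed by untilded and tilded matrices, respectively), exploit the orthogonality $\mathbb{E}[\langle M(X - \mathbb{E}[X]), \mathbb{E}[X]\rangle] = 0$ to decouple them, and verify that the driver terms arising from $\Phi f$ and $R_{12}R_{22}^{-1}\rho_2$ align correctly with $\widetilde{q}$ and $\widetilde{\rho}_1$. Once the cost identity is secured, the subtracted term is independent of $u$ and $u_0$, so $u^*$ is optimal for Problem (BLQ) if and only if the associated $u_0^*$ is optimal for Problem (NC-BLQ); combined with the admissibility bijection of stage (i), this establishes the full equivalence.
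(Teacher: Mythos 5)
Your proposal follows essentially the same route as the paper: apply It\^o's formula to $\langle\Phi(Y-\mathbb E[Y]),Y-\mathbb E[Y]\rangle+\langle\widetilde{\Phi}\mathbb E[Y],\mathbb E[Y]\rangle$ to absorb $G,\widetilde G,Q,\widetilde Q$ into the running cost, then use the unipotent change of control variable \eqref{linear   trans1}--\eqref{linear   trans2} to block-diagonalize the weighting matrices and eliminate the $R_{12},\widetilde R_{12}$ cross terms, with the residual boundary term giving exactly the stated correction. Your explicit verification that the state equations coincide under the substitution (via the cancellation in $\mathcal C Z_0+\widehat{\mathcal C}\mathbb E[Z_0]+Bu_0+\widehat B\mathbb E[u_0]$) is correct and merely spells out what the paper leaves as a direct calculation.
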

\begin{proof}
Applying It\^o formula to $\langle\Phi(Y-\mathbb E[Y]), Y-\mathbb E[Y]\rangle+\langle\widetilde{\Phi}\mathbb E[Y],\mathbb E[Y]\rangle$ on time interval $[0, T]$, we have
\begin{equation*}
\begin{aligned}
&\mathbb E\left[\langle\Phi(T)(\zeta-\mathbb E[\zeta]), \zeta-\mathbb E[\zeta]\rangle +\langle\widetilde{\Phi}(T)\mathbb E[\zeta],\mathbb E[\zeta]\rangle\right]
\\&+\mathbb E\left[\langle G(Y(0)-\mathbb E[Y(0)]), Y(0)-\mathbb E[Y(0)]\rangle
+\langle \widetilde{G}\mathbb E[Y(0)], \mathbb E[Y(0)]\rangle\right]
\\=&\mathbb E\left[\int_{0}^T\Bigg\langle\left(\begin{array}{ccc}
-Q&\Phi C&\Phi B
\\C^\top\Phi&\Phi&0
\\B^\top\Phi&0&0 \end{array}\right)\left(\begin{array}{c}Y-\mathbb E[Y]\\Z-\mathbb E[Z]\\u-\mathbb E[u]\end{array}\right), \left(\begin{array}{c}Y-\mathbb E[Y]\\Z-\mathbb E[Z]\\u-\mathbb E[u]\end{array}\right)\Bigg\rangle dt\right.
\\&\left.+\int_{0}^T\Bigg\langle\left(\begin{array}{ccc}
-\widetilde{Q}&\widetilde{\Phi} \widetilde{C}&\widetilde{\Phi} \widetilde{B}
\\\widetilde{C}^\top\widetilde{\Phi} &\Phi&0
\\\widetilde{B}^\top\widetilde{\Phi}&0&0 \end{array}\right)\left(\begin{array}{c}\mathbb E[Y]\\\mathbb E[Z]\\\mathbb E[u]\end{array}\right), \left(\begin{array}{c}\mathbb E[Y]\\\mathbb E[Z]\\\mathbb E[u]\end{array}\right)\Bigg\rangle dt\right.
\\&\left.+\int_{0}^T\Big(2\langle\Phi (Y-\mathbb E[Y]),f-\mathbb E[f]\rangle+2\langle\widetilde{\Phi}\mathbb E[Y], \mathbb E[f]\rangle\Big)\right].
\end{aligned}
\end{equation*}
It follows that
\begin{equation}\label{eq:equi1}
\begin{aligned}
J(\zeta;  u)=\widehat{J}(\zeta;  u)-\frac{1}{2}\mathbb E\left[\langle\Phi(T)(\zeta-\mathbb E[\zeta]),  \zeta-\mathbb E[\zeta]\rangle +\langle\widetilde{\Phi}(T)\mathbb E[\zeta], \mathbb E[\zeta]\rangle\right].
\end{aligned}
\end{equation}
where
\begin{equation*}
\begin{aligned}
&\widehat{J}(\zeta;  u)\\=&\frac{1}{2}\mathbb E\left[\int_{0}^T\Bigg\langle\left(\begin{array}{ccc}
0&S_{1}+\Phi C&S_{2}+\Phi B
\\(S_{1}+\Phi C)^\top&R_{11}+\Phi&R_{12}
\\(S_{2}+\Phi B)^\top&R_{12}^\top&R_{22} \end{array}\right)\left(\begin{array}{c}Y-\mathbb E[Y]\\Z-\mathbb E[Z]\\u-\mathbb E[u]\end{array}\right),  \left(\begin{array}{c}Y-\mathbb E[Y]\\Z-\mathbb E[Z]\\u-\mathbb E[u]\end{array}\right)\Bigg\rangle dt\right.
\\&+\int_{0}^T\Bigg\langle\left(\begin{array}{ccc}
0&\widetilde{S}_{1}+\widetilde{\Phi} \widetilde{C}&\widetilde{S}_{2}+\widetilde{\Phi} \widetilde{B}
\\(\widetilde{S}_{1}+\widetilde{\Phi} \widetilde{C})^\top&\widetilde{R}_{11}+\Phi&\widetilde{R}_{12}
\\(\widetilde{S}_{2}+\widetilde{\Phi} \widetilde{B})^\top&\widetilde{R}_{12}^\top&\widetilde{R}_{22} \end{array}\right)\left(\begin{array}{c}\mathbb E[Y]\\\mathbb E[Z]\\\mathbb E[u]\end{array}\right),  \left(\begin{array}{c}\mathbb E[Y]\\\mathbb E[Z]\\\mathbb E[u]\end{array}\right)\Bigg\rangle dt
\\&\left.+2\int_{0}^T\Bigg\langle\left(\begin{array}{c}\widetilde{q}\\\rho_1\\\rho_2\end{array}\right),  \left(\begin{array}{c}Y\\Z\\u\end{array}\right)\Bigg\rangle dt
+2\langle g,  Y(0)\rangle\right].
\end{aligned}
\end{equation*}
We pose an LQ problem as follows.\\
\textbf{Problem (BLQA)}: Find a $u^*\in \mathcal U[0,  T]$, such that
$$
\widehat{J}(\zeta;  u^*)=\inf_{u\in \mathcal U[0,  T]}\widehat{J}(\zeta;  u).
$$
We observe from  \eqref{eq:equi1} that  $J(\zeta;  u)$ and $\widehat{J}(\zeta;  u)$ differ by only a constant. It is obvious  that $(Y, Z, u)$ is an admissible (optimal) triple of Problem (BLQA) if and only if $(Y, Z, u)$   is an admissible (optimal) triple of Problem (BLQ). In the following, we investigate the relationship between Problem  (BLQA) and Problem (NC-BLQ).
From \eqref{linear   trans1} and \eqref{linear   trans2}, we have
 \begin{equation*}
\left(\begin{array}{c}
 Y-\mathbb E [Y]\\
Z-\mathbb E [Z]\\u-\mathbb E [u]\end{array}\right)=\left(\begin{array}{ccc}
I&0&0\\
0&I&0\\
0&-R_{22}^{-1}R_{12}^\top&I\end{array}\right)\left(\begin{array}{c}
Y_0-\mathbb E [Y_0]\\
Z_0-\mathbb E [Z_0]\\u_0-\mathbb E [u_0]\end{array}\right),
\end{equation*}
 \begin{equation*}
\left(\begin{array}{c}
\mathbb E [Y]\\
\mathbb E [Z]\\\mathbb E [u]\end{array}\right)=\left(\begin{array}{ccc}
I&0&0\\
0&I&0\\
0&-\widetilde{R}_{22}^{-1}\widetilde{R}_{12}^\top&I\end{array}\right)\left(\begin{array}{c}
\mathbb E [Y_0]\\
\mathbb E[Z_0]\\\mathbb E[u_0]\end{array}\right).
\end{equation*}
Linear transformation \eqref{linear   trans1} with \eqref{linear   trans2} is  invertible. Through direct calculations, we can  verify that $(Y, Z, u)$ is an admissible (optimal) triple of Problem (BLQA) if and only if $(Y_0, Z_0, u_0)$   is an admissible (optimal) triple of Problem (NC-BLQ). Moreover,  $\widetilde{J}(\zeta;  u_0)=\widehat{J}(\zeta;  u)$. The others  follow immediately.
\end{proof}

According to Theorem \ref{control system lemma}, we may assume
\begin{equation}\label{eq: simcoeff}
G=0, \widetilde{G}=0,   Q=0,  \widetilde{Q}=0,  R_{12}=0,  \widetilde{R}_{12}=0,
\end{equation}
in cost functional \eqref{cost functional} without lose of generality throughout this paper.
In the case that \eqref{eq: simcoeff} holds,   Riccati equations \eqref{eq: Pi riccati1} and \eqref{eq: Pi riccati2} take
\begin{equation} \label{eq: Pi riccati1 simp}
\left\{\begin{aligned}&\dot{\Pi}_{\lambda}+\Pi_{\lambda}A+A^\top \Pi_{\lambda}\\&\ \ -\left(\begin{array}{c}
C^\top\Pi_{\lambda}+S_1^\top\\B^\top\Pi_{\lambda}+S_2^\top\end{array}\right)^\top\left(\begin{array}{cc}
R_{11}+\Pi_{\lambda}&0\\0&R_{22}\end{array}\right)^{-1}\left(\begin{array}{c}
C^\top \Pi_{\lambda}+S_1^\top\\B^\top \Pi_{\lambda}+S_2^\top\end{array}\right)=0,\\&\Pi_{\lambda}(T)=\lambda I,
\end{aligned}\right.
\end{equation}
and
\begin{equation} \label{eq: Pi riccati2 simp}
\left\{\begin{aligned}&\dot{\widetilde{\Pi}}_{\lambda}+\widetilde{\Pi}_{\lambda}\widetilde{A}+\widetilde{A}^\top \widetilde{\Pi}_{\lambda}\\&\ \ -\left(\begin{array}{c}
\widetilde{C}^\top\widetilde{\Pi}_{\lambda}+\widetilde{S}_1^\top\\\widetilde{B}^\top \widetilde{\Pi}_{\lambda}+\widetilde{S}_2^\top\end{array}\right)^\top\left(\begin{array}{cc}
\widetilde{R}_{11}+\Pi_{\lambda}&0\\0&\widetilde{R}_{22}\end{array}\right)^{-1}\left(\begin{array}{c}
\widetilde{C}^\top \widetilde{\Pi}_{\lambda}+\widetilde{S}_1^\top\\\widetilde{B}^\top \widetilde{\Pi}_{\lambda}+\widetilde{S}_2^\top\end{array}\right)=0, \\&\widetilde{\Pi}_{\lambda}(T)=\lambda I,
\end{aligned}
\right.
\end{equation}
respectively.
\begin{proposition}\label{Th: monoticity}
Let Assumptions $A1$-$A3$ and \eqref{eq: simcoeff} hold. For $\lambda\geq\lambda_0$, let $\Pi_{\lambda},\widehat{\Pi}_{\lambda}$ be the unique solutions of \eqref{eq: Pi riccati1 simp} and \eqref{eq: Pi riccati2 simp}, respectively. Then we have $$\Pi_{\lambda}(t)\geq 0,\widehat{\Pi}_{\lambda}(t)\geq0, \forall t\in [0, T].$$
Moreover, for $\lambda_2>\lambda_1\geq\lambda_0$, we have
$$\Pi_{\lambda_2}(t)> \Pi_{\lambda_1}(t), \widehat{\Pi}_{\lambda_2}(t)> \widehat{\Pi}_{\lambda_1}(t), \forall t\in [0, T]. $$
\end{proposition}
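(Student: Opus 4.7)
The plan is to deduce non-negativity from the value-function representation of Corollary~\ref{coro: 2} applied at each time $t$, and then obtain strict monotonicity by differentiating each Riccati equation with respect to $\lambda$ and analysing the resulting linearised equations---directly by It\^o's formula for $\Pi_{\lambda}$, and by a deterministic Lyapunov representation for $\widetilde{\Pi}_{\lambda}$.

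\textbf{Non-negativity.} The derivation of Corollary~\ref{coro: 2} carries over verbatim to Problem~(FLQ$_{\lambda}$) restarted at any time $t\in[0,T]$ with initial data $\xi\in\mathcal{L}^{2}_{\mathcal{F}_{t}}(\Omega;\mathbb{R}^{n})$, yielding
\begin{equation*}
\mathcal{V}_{\lambda}(t;\xi)=\frac{1}{2}\mathbb{E}\!\left[\langle\Pi_{\lambda}(t)(\xi-\mathbb{E}[\xi]),\xi-\mathbb{E}[\xi]\rangle+\langle\widetilde{\Pi}_{\lambda}(t)\mathbb{E}[\xi],\mathbb{E}[\xi]\rangle\right].
\end{equation*}
Under \eqref{eq: simcoeff} we have $G=\widehat{G}=0$, so the hypothesis $-G\geq 0,\;-\widehat{G}\geq 0$ of Theorem~\ref{th: connection} is trivially met and $\mathcal{V}_{\lambda}(t;\xi)\geq 0$ for every $\lambda\geq\lambda_{0}$. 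Taking $\xi=\eta\in\mathbb{R}^{n}$ constant isolates $\langle\widetilde{\Pi}_{\lambda}(t)\eta,\eta\rangle\geq 0$, while taking $\xi=\eta(\mathbf{1}_{A}-\mathbb{P}(A))$ for $A\in\mathcal{F}_{t}$ with $\mathbb{P}(A)\in(0,1)$ isolates $\langle\Pi_{\lambda}(t)\eta,\eta\rangle\geq 0$; hence $\Pi_{\lambda}(t),\widetilde{\Pi}_{\lambda}(t)\geq 0$ for every $t$.

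\textbf{Strict monotonicity of $\Pi_{\lambda}$.} The uniform positivity \eqref{eq. weight1} and standard perturbation theory for Riccati ODEs make $\lambda\mapsto\Pi_{\lambda}$ of class $C^{1}$ on $[\lambda_{0},\infty)$. Writing $V_{\lambda}=(R_{11}+\Pi_{\lambda})^{-1}(C^{\top}\Pi_{\lambda}+S_{1}^{\top})$, $W_{\lambda}=R_{22}^{-1}(B^{\top}\Pi_{\lambda}+S_{2}^{\top})$ and $\mathbb{A}_{\lambda}=A-CV_{\lambda}-BW_{\lambda}$, differentiation of \eqref{eq: Pi riccati1 simp} in $\lambda$ shows that $Y_{\lambda}:=\partial_{\lambda}\Pi_{\lambda}$ satisfies
\begin{equation*}
\dot{Y}_{\lambda}+Y_{\lambda}\mathbb{A}_{\lambda}+\mathbb{A}_{\lambda}^{\top}Y_{\lambda}+V_{\lambda}^{\top}Y_{\lambda}V_{\lambda}=0,\qquad Y_{\lambda}(T)=I.
\end{equation*}
I would then introduce the auxiliary linear SDE $d\xi(s)=\mathbb{A}_{\lambda}(s)\xi(s)\,ds+V_{\lambda}(s)\xi(s)\,dW(s)$, $\xi(t_{0})=\eta\in\mathbb{R}^{n}$, and apply It\^o's formula to $\langle Y_{\lambda}(s)\xi(s),\xi(s)\rangle$. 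The finite-variation part vanishes by the linearised Riccati equation, giving $\langle Y_{\lambda}(t_{0})\eta,\eta\rangle=\mathbb{E}[|\xi(T)|^{2}]$. For $\eta\neq 0$, $\xi(\cdot)$ equals $\Phi(\cdot,t_{0})\eta$ with $\Phi(\cdot,t_{0})$ an almost surely invertible random fundamental matrix of the linear SDE, so $\mathbb{E}[|\xi(T)|^{2}]>0$; hence $Y_{\lambda}(t)>0$ for every $t\in[0,T]$ and $\lambda\geq\lambda_{0}$, and integrating yields $\Pi_{\lambda_{2}}(t)-\Pi_{\lambda_{1}}(t)=\int_{\lambda_{1}}^{\lambda_{2}}Y_{\lambda}(t)\,d\lambda>0$.

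\textbf{Strict monotonicity of $\widetilde{\Pi}_{\lambda}$ and main obstacle.} Because \eqref{eq: Pi riccati2 simp} couples $\widetilde{\Pi}_{\lambda}$ to $\Pi_{\lambda}$ through $(\widetilde{R}_{11}+\Pi_{\lambda})^{-1}$, differentiation in $\lambda$ (with the natural analogues $\widetilde{V}_{\lambda},\widetilde{W}_{\lambda},\widetilde{\mathbb{A}}_{\lambda}$) produces a purely linear Lyapunov-type ODE
\begin{equation*}
\dot{\widetilde{Y}}_{\lambda}+\widetilde{Y}_{\lambda}\widetilde{\mathbb{A}}_{\lambda}+\widetilde{\mathbb{A}}_{\lambda}^{\top}\widetilde{Y}_{\lambda}+\widetilde{V}_{\lambda}^{\top}Y_{\lambda}\widetilde{V}_{\lambda}=0,\qquad\widetilde{Y}_{\lambda}(T)=I,
\end{equation*}
for $\widetilde{Y}_{\lambda}=\partial_{\lambda}\widetilde{\Pi}_{\lambda}$, whose source $\widetilde{V}_{\lambda}^{\top}Y_{\lambda}\widetilde{V}_{\lambda}$ is positive semidefinite by the previous step. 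Solving this linear ODE via the deterministic fundamental matrix of $\dot{\phi}=\widetilde{\mathbb{A}}_{\lambda}\phi$ expresses $\widetilde{Y}_{\lambda}(t)$ as a sum of a conjugation of $I$ (positive definite) and a nonnegative integral, so $\widetilde{Y}_{\lambda}(t)>0$ and $\widetilde{\Pi}_{\lambda_{2}}(t)>\widetilde{\Pi}_{\lambda_{1}}(t)$. The main technical nuisance is the smooth dependence of $\Pi_{\lambda},\widetilde{\Pi}_{\lambda}$ on $\lambda$ together with the fact that the It\^o stochastic integral used above is a true martingale (so that its expectation vanishes); both follow from \eqref{eq. weight1}, the boundedness of the auxiliary SDE coefficients, and standard $L^{2}$ estimates, with a stopping-time localisation if full rigour is desired.
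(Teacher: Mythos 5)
Your proof is correct, but it reaches the strict monotonicity by a genuinely different route than the paper. For the non-negativity, the paper only evaluates the value function of Problem (FLQ$_\lambda$) at $t=0$ to get $\Pi_\lambda(0)\geq0$, $\widetilde\Pi_\lambda(0)\geq0$, and then propagates this to all $t$ through the explicit Lyapunov-type representation $\Pi_\lambda(t)=[\Phi(t)^{-1}]^\top[\Pi_\lambda(0)+\int_0^t\Phi^\top\mathcal Q_\lambda\Phi\,ds]\Phi(t)^{-1}$ with $\mathcal Q_\lambda\geq0$ coming from \eqref{eq. weight1}; you instead invoke the value-function representation of the problem restarted at time $t$, which is equivalent in substance but requires the (standard, and easily checked for the forward problem by extending controls by zero on $[0,t)$) observation that the restarted problem inherits the uniform convexity and the same Riccati representation of its value. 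For the monotonicity, the paper writes down the Riccati-type equations satisfied by the differences $\Delta=\Pi_{\lambda_2}-\Pi_{\lambda_1}$ and $\widetilde\Delta=\widetilde\Pi_{\lambda_2}-\widetilde\Pi_{\lambda_1}$ and then cites an external comparison theorem (Corollary 2.3 of Sun et al.\ \cite{SWX2022}) to conclude strict positivity; you differentiate in $\lambda$ and analyse the resulting linear Lyapunov equations for $\partial_\lambda\Pi_\lambda$ and $\partial_\lambda\widetilde\Pi_\lambda$ directly, via a stochastic representation in the first case and a deterministic fundamental-matrix formula in the second. Your computations of the linearised equations are correct (I checked both), and the argument is self-contained, at the price of having to justify $C^1$ dependence of $\Pi_\lambda,\widetilde\Pi_\lambda$ on $\lambda$ --- which does follow from smooth dependence of ODE solutions on terminal data, since \eqref{eq. weight1} keeps the right-hand sides of \eqref{eq: Pi riccati1 simp}--\eqref{eq: Pi riccati2 simp} smooth in a neighbourhood of the solutions. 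Two minor points worth tidying: at $t=0$ the trivial filtration $\mathcal F_0$ admits no set $A$ with $\mathbb P(A)\in(0,1)$, so the isolation of $\langle\Pi_\lambda(0)\eta,\eta\rangle$ should be completed by continuity from $t>0$ (the paper's own argument at $t=0$ has the same feature); and when $\lambda_1=\lambda_0$ one should either note one-sided differentiability at the endpoint or bound $\Pi_{\lambda_2}-\Pi_{\lambda_1}$ below by $\int_{\lambda'}^{\lambda_2}\partial_\lambda\Pi_\lambda\,d\lambda$ for some interior $\lambda'$, using the weak monotonicity that follows from $\mathcal J_{\lambda'}\geq\mathcal J_{\lambda_1}$. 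Neither is a real gap.
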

\begin{proof}
For Problem (FLQ$_{\lambda}$) with $\lambda\geq\lambda_0$,  Theorem \ref{th: connection} and Corollary \ref{coro: 1} give
$$\mathcal{V}_{\lambda}(\xi)=\frac{1}{2}\mathbb E\Big[\langle\Pi_{\lambda}(0)(\xi-\mathbb E[\xi]),  \xi-\mathbb E[\xi]\rangle+\langle \widetilde{\Pi}_{\lambda}(0)\mathbb E[\xi]),  \mathbb E[\xi]\rangle\Big]\geq0.$$
For any  $\xi\neq0$ with $\mathbb{E}[\xi]=0$,
$$
\mathcal{V}_{\lambda}(\xi)=\frac{1}{2}\mathbb E\Big[\langle\Pi_{\lambda}(0)(\xi-\mathbb E[\xi]),  \xi-\mathbb E[\xi]\rangle\Big]\geq0,
$$
which implies that $\Pi_{\lambda}(0)\geq 0$.
For any $\xi\neq0$ with $\xi=\mathbb{E}[\xi]$,
$$
\mathcal{V}_{\lambda}(\xi)=\frac{1}{2}\mathbb E\Big[\langle\widetilde{\Pi}_{\lambda}(0)\mathbb E[\xi], \mathbb E[\xi]\rangle\Big]\geq0,
$$
which implies that $\widetilde{\Pi}_{\lambda}(0)\geq 0$.
Denote
\begin{equation*}
\mathcal Q_{\lambda}=\left(\begin{array}{c}
C^\top\Pi_{\lambda}+S_1^\top\\B^\top\Pi_{\lambda}+S_2^\top\end{array}\right)^\top\left(\begin{array}{cc}
R_{11}+\Pi_{\lambda}&0\\0&R_{22}\end{array}\right)^{-1}\left(\begin{array}{c}
C^\top \Pi_{\lambda}+S_1^\top\\B^\top \Pi_{\lambda}+S_2^\top\end{array}\right),
\end{equation*}
and
\begin{equation*}
\widetilde{\mathcal Q}_{\lambda}=\left(\begin{array}{c}
\widetilde{C}^\top\widetilde{\Pi}_{\lambda}+\widetilde{S}_1^\top\\\widetilde{B}^\top \widetilde{\Pi}_{\lambda}+\widetilde{S}_2^\top\end{array}\right)^\top\left(\begin{array}{cc}
\widetilde{R}_{11}+\Pi_{\lambda}&0\\0&\widetilde{R}_{22}\end{array}\right)^{-1}\left(\begin{array}{c}
\widetilde{C}^\top \widetilde{\Pi}_{\lambda}+\widetilde{S}_1^\top\\\widetilde{B}^\top \widetilde{\Pi}_{\lambda}+\widetilde{S}_2^\top\end{array}\right).
\end{equation*}
Moreover,  let $\Phi$  and $\widetilde{\Phi} $ be     solutions of  matrix ODEs
\begin{equation*}
\left\{\begin{aligned}
&d\Phi=A\Phi dt,
\\&\Phi(0)= I,
\end{aligned}
\right.
\end{equation*}
and
\begin{equation*}
\left\{\begin{aligned}
&d\widetilde{\Phi}=\widetilde{A}\widetilde{\Phi} dt,
\\&\widetilde{\Phi}(0)= I,
\end{aligned}
\right.
\end{equation*}
respectively. Equations \eqref{eq: simcoeff} and \eqref{eq: Pi riccati1 simp} imply that
$$\Pi_\lambda(t)=\left[\Phi(t)^{-1}\right]^\top\left[\Pi_\lambda(0)+\int_0^t\Phi(s)^\top\mathcal Q_{\lambda}(s)\Phi(s)ds\right]\Phi(t)^{-1},$$
and
$$\widetilde{\Pi}_\lambda(t)=\left[\widetilde{\Phi}(t)^{-1}\right]^\top\left[\widetilde{\Pi}_\lambda(0)+\int_0^t\widetilde{\Phi}(s)^\top\widetilde{\mathcal Q}_{\lambda}(s)\widetilde{\Phi}(s)ds\right]\widetilde{\Phi}(t)^{-1}.$$
From Corollary \ref{coro: 2}, it is easy to  see that $$\Pi_{\lambda}(t)\geq 0, \widetilde{\Pi}_{\lambda}(t)\geq 0,  \forall t\in [0,  T].$$

Denote $\mathcal{B}=(C,  B),  \widehat{\mathcal{B}}=(\widehat{C},  \widehat{B}),  \widetilde{\mathcal{B}}=\mathcal{B}+\widehat{\mathcal{B}},  \mathcal{D}=(I,  0)$. With these notations, Riccati equations \eqref{eq: Pi riccati1 simp} and \eqref{eq: Pi riccati2 simp} can be rewritten as
\begin{equation*}
\left\{\begin{aligned}&\dot{\Pi}_{\lambda}+\Pi_{\lambda}A+A^\top \Pi_{\lambda}-\left(\Pi_{\lambda}\mathcal{B}+S\right)\left(
R+\mathcal{D}^\top \Pi_{\lambda}\mathcal{D}\right)^{-1}\left(\Pi_{\lambda}\mathcal{B}+S\right)^\top=0, \\&\Pi_{\lambda}(T)=\lambda I,
\end{aligned}\right.
\end{equation*}
and
\begin{equation*}
\left\{\begin{aligned}&\dot{\widetilde{\Pi}}_{\lambda}+\widetilde{\Pi}_{\lambda}\widetilde{A}+\widetilde{A}^\top \widetilde{\Pi}_{\lambda}-\left(\widetilde{\Pi}_{\lambda}\widetilde{\mathcal{B}}+\widetilde{S}\right)\left(
\widetilde{R}+\mathcal{D}^\top \Pi_{\lambda}\mathcal{D}\right)^{-1}\left(\widetilde{\Pi}_{\lambda}\widetilde{\mathcal{B}}+\widetilde{S}\right)^\top=0, \\&\widetilde{\Pi}_{\lambda}(T)=\lambda I,
\end{aligned}
\right.
\end{equation*}
respectively.
For $\lambda_2>\lambda_1\geq\lambda_0$, letting $\Delta=\Pi_{\lambda_2}-\Pi_{\lambda_1},  \widetilde{\Delta}=\widetilde{\Pi}_{\lambda_2}-\widetilde{\Pi}_{\lambda_1}$, we obtain
\begin{equation*}
\left\{\begin{aligned}&\dot{\Delta}+\Delta A+A^\top \Delta+\left(\Pi_{\lambda_1}\mathcal{B}+S\right)\left(
R+\mathcal{D}^\top \Pi_{\lambda_1}\mathcal{D}\right)^{-1}\left(\Pi_{\lambda_1}\mathcal{B}+S\right)^\top \\&-\left(\Delta\mathcal{B}+\Pi_{\lambda_1}\mathcal{B}+S\right)\left(
R+\mathcal{D}^\top\Pi_{\lambda_1}\mathcal{D}+\mathcal{D}^\top \Delta\mathcal{D}\right)^{-1}\left(\Delta\mathcal{B}+\Pi_{\lambda_1}\mathcal{B}+S\right)^\top=0,
\\&\Delta(T)=(\lambda_2-\lambda_1) I,
\end{aligned}\right.
\end{equation*}
and
\begin{equation*}
\left\{\begin{aligned}&\dot{\widetilde{\Delta}}+\widetilde{\Delta}\widetilde{A}+\widetilde{A}^\top \widetilde{\Delta}+\left(\widetilde{\Pi}_{\lambda_1}\widetilde{\mathcal{B}}+\widetilde{S}\right)\left(
\widetilde{R}+\mathcal{D}^\top \Pi_{\lambda_1}\mathcal{D}\right)^{-1}\left(\widetilde{\Pi}_{\lambda_1}\widetilde{\mathcal{B}}+S\right)^\top
\\&-\left(\widetilde{\Delta}\widetilde{\mathcal{B}}+\widetilde{\Pi}_{\lambda_1}\widetilde{\mathcal{B}}+\widetilde{S}\right)\left(
\widetilde{R}+\mathcal{D}^\top \Pi_{\lambda_1}\mathcal{D}+\mathcal{D}^\top \Delta\mathcal{D}\right)^{-1}\left(\widetilde{\Delta}\widetilde{\mathcal{B}}+\widetilde{\Pi}_{\lambda_1}\widetilde{\mathcal{B}}
+\widetilde{S}\right)^\top=0, \\&\widetilde{\Delta}(T)=(\lambda_2-\lambda_1) I.
\end{aligned}
\right.
\end{equation*}
According to Corollary  2.3 in Sun et. al. \cite{SWX2022}, we have
$$\Pi_{\lambda_2}(t)> \Pi_{\lambda_1}(t), \widetilde{\Pi}_{\lambda_2}(t)> \widetilde{\Pi}_{\lambda_1}(t), \forall t\in [0, T].$$
\end{proof}
\subsection{Optimal control and value function}
In this subsection,  we construct the optimal control of Problem (BLQ) in the case that  Assumption $A3$ and \eqref{eq: simcoeff} hold via an adjoint process, two Riccati equations with terminal conditions and a mean-field BSDE. For this end,
we introduce the following Ricccati  equations
\begin{equation}\label{eq.RE1}
\left\{
\begin{aligned}
&\dot\Upsilon-\Upsilon A^\top-A\Upsilon
\\&+\left(\begin{array}{cc}C^\top+S_1^\top\Upsilon\\B^\top+S_2^\top\Upsilon \end{array}\right)^\top\left(\begin{array}{cc}I+\Upsilon R_{11}& 0\\0&R_{22}\end{array}\right)^{-1}\left(\begin{array}{c}\Upsilon C^\top+\Upsilon S_1^\top\Upsilon\\B^\top+S_2^\top\Upsilon\end{array}\right)=0, \\
&\Upsilon(T)=0,
\end{aligned}
\right.
\end{equation}
and
\begin{equation}\label{eq.RE2}
\left\{
\begin{aligned}
&\dot{\widetilde{\Upsilon}}-\widetilde{\Upsilon} \widetilde{A}^\top-\widetilde{A}\widetilde{\Upsilon}
\\&+\left(\begin{array}{c}\widetilde{C}^\top+\widetilde{S}_1^\top\widetilde{\Upsilon}
\\\widetilde{B}^\top+\widetilde{S}_2^\top\widetilde{\Upsilon}\end{array}\right)^\top
\left(\begin{array}{cc}I+\Upsilon \widetilde{R}_{11}&0\\0&\widetilde{R}_{22}\end{array}\right)^{-1}\left(\begin{array}{c}\Upsilon \widetilde{C}^\top+\Upsilon  \widetilde{S}_1^\top\widetilde{\Upsilon}
\\\widetilde{B}^\top+\widetilde{S}_2^\top\widetilde{\Upsilon}\end{array}\right)=0, \\
&\widetilde{\Upsilon}(T)=0.
\end{aligned}
\right.
\end{equation}
\begin{theorem}\label{Th: solva RE backward}
Let Assumptions $A1$-$A3$ and \eqref{eq: simcoeff} hold. Then Riccati equations \eqref{eq.RE1} and  \eqref{eq.RE2} admit unique solutions $\Upsilon\geq 0,  \widetilde{\Upsilon}\geq 0$, such that $I+\Upsilon R_{11},  I+\Upsilon \widetilde{R}_{11}$ are invertible on $[0, T]$,  $(I+\Upsilon R_{11})^{-1},  (I+\Upsilon \widetilde{R}_{11})^{-1}\in\mathcal L^\infty(0, T;\mathbb R^{n\times n})$,  $(I+\Upsilon R_{11})^{-1}\Upsilon\geq 0,  (I+\Upsilon \widetilde{R}_{11})^{-1}\Upsilon\geq 0$.
\end{theorem}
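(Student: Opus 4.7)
The plan is to construct $\Upsilon$ and $\widetilde{\Upsilon}$ as pointwise limits of the inverses $\Upsilon_\lambda := \Pi_\lambda^{-1}$, $\widetilde{\Upsilon}_\lambda := \widetilde{\Pi}_\lambda^{-1}$ of the forward Riccati solutions from Corollary~\ref{coro: 2} as $\lambda\to\infty$, leveraging the monotonicity of Proposition~\ref{Th: monoticity}. First, I would verify the strict positive definiteness $\Pi_\lambda(t),\widetilde{\Pi}_\lambda(t)>0$ on $[0,T]$, so that the inverses make sense. This follows by combining the value-function representation $\mathcal V_\lambda(\xi)=\frac{1}{2}\mathbb E[\langle\Pi_\lambda(0)(\xi-\mathbb E[\xi]),\xi-\mathbb E[\xi]\rangle+\langle\widetilde{\Pi}_\lambda(0)\mathbb E[\xi],\mathbb E[\xi]\rangle]$ with the second assertion of Theorem~\ref{th: connection} (applicable since $G=\widetilde{G}=0$ under \eqref{eq: simcoeff}): a nonzero $\xi$ forces an optimal pair $(u^*,v^*)$ to have either strictly positive control effort or $\mathbb E|X(T)|^2>0$ from the invertible free evolution, both of which contribute strictly positively to the cost. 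A time-shifted version of the same argument extends strict positivity to the whole interval.

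Next, differentiating $\Pi_\lambda\Upsilon_\lambda=I$ and substituting \eqref{eq: Pi riccati1 simp}, I would use the algebraic identity $(R_{11}+\Pi_\lambda)^{-1}=(I+\Upsilon_\lambda R_{11})^{-1}\Upsilon_\lambda$ (obtained by writing $R_{11}+\Pi_\lambda=\Upsilon_\lambda^{-1}(I+\Upsilon_\lambda R_{11})$ and inverting) to check by a direct block computation that $\Upsilon_\lambda$ solves precisely \eqref{eq.RE1}, with terminal condition $\Upsilon_\lambda(T)=\lambda^{-1}I$. The same derivation applied to \eqref{eq: Pi riccati2 simp} yields \eqref{eq.RE2} for $\widetilde{\Upsilon}_\lambda$ with $\widetilde{\Upsilon}_\lambda(T)=\lambda^{-1}I$; note that $\Upsilon_\lambda$ (rather than $\widetilde{\Upsilon}_\lambda$) appears in the weighting block of \eqref{eq.RE2}, precisely because $\Pi_\lambda$ (rather than $\widetilde{\Pi}_\lambda$) appears inside the inverted matrix in \eqref{eq: Pi riccati2 simp}. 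Inverting the strict order $\Pi_{\lambda_2}>\Pi_{\lambda_1}$ from Proposition~\ref{Th: monoticity} gives $0\leq\Upsilon_{\lambda_2}<\Upsilon_{\lambda_1}\leq\Upsilon_{\lambda_0}\in\mathcal L^\infty$, so $\Upsilon_\lambda$ is decreasing and uniformly bounded, hence converges pointwise to some $\Upsilon\geq0$ with $\Upsilon(T)=0$; the same holds for $\widetilde{\Upsilon}_\lambda\downarrow\widetilde{\Upsilon}\geq 0$.

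The crux is passing to the limit in the Riccati equations while keeping $(I+\Upsilon R_{11})^{-1}$ under control. Introduce $\Theta_\lambda:=(R_{11}+\Pi_\lambda)^{-1}$; by monotonicity and Corollary~\ref{coro: 2}, $0\leq\Theta_\lambda\leq\Theta_{\lambda_0}\in\mathcal L^\infty$ uniformly, and $\Theta_\lambda\downarrow\Theta$ for some bounded $\Theta\geq 0$. The identity $(I+\Upsilon_\lambda R_{11})^{-1}=\Theta_\lambda\Pi_\lambda=I-\Theta_\lambda R_{11}$ (the second equality coming from $\Theta_\lambda(R_{11}+\Pi_\lambda)=I$) provides a uniform-in-$\lambda$ bound on $(I+\Upsilon_\lambda R_{11})^{-1}$ and identifies its pointwise limit as $I-\Theta R_{11}$. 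Passing to the limit in $(I+\Upsilon_\lambda R_{11})(I-\Theta_\lambda R_{11})=I$ then yields $(I+\Upsilon R_{11})(I-\Theta R_{11})=I$, so $I+\Upsilon R_{11}$ is invertible on $[0,T]$ with bounded inverse $I-\Theta R_{11}$, and $(I+\Upsilon R_{11})^{-1}\Upsilon=\Theta\geq 0$ as a pointwise limit of nonnegative definite matrices. Dominated convergence then passes the integrated form of the Riccati equation for $\Upsilon_\lambda$ to the limit to give \eqref{eq.RE1}; uniqueness follows from a standard Gronwall argument on the linear matrix ODE satisfied by the difference of two solutions with zero terminal condition. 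The analogous argument handles $\widetilde{\Upsilon}$, using the companion identity $(\widetilde{R}_{11}+\Pi_\lambda)^{-1}=(I+\Upsilon_\lambda\widetilde{R}_{11})^{-1}\Upsilon_\lambda$.

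The hard part will be precisely the last point: ensuring that $I+\Upsilon R_{11}$ (and $I+\Upsilon\widetilde{R}_{11}$) remains invertible with uniformly bounded inverse in the limit, despite $R_{11}$ being possibly indefinite and $\Upsilon$ degenerating at $T$. This is resolved by the identity $(I+\Upsilon_\lambda R_{11})^{-1}=I-\Theta_\lambda R_{11}$, which transfers the required control onto the manifestly bounded and monotone quantity $\Theta_\lambda=(R_{11}+\Pi_\lambda)^{-1}$ from Corollary~\ref{coro: 2}.
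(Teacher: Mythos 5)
Your proposal is correct and follows the same skeleton as the paper's proof: invert the forward Riccati solutions $\Pi_\lambda,\widetilde{\Pi}_\lambda$ (strictly positive for $\lambda>\lambda_0$ by Proposition~\ref{Th: monoticity} — you do not need your value-function/time-shift argument for this, since $\Pi_\lambda>\Pi_{\lambda_0}\geq 0$ already gives it), derive the ODE for $\Upsilon_\lambda=\Pi_\lambda^{-1}$, use monotonicity to get a pointwise decreasing limit $\Upsilon\geq 0$, and pass to the limit in the integrated equation. The one place where you genuinely diverge is the crux you correctly identify: keeping $(I+\Upsilon R_{11})^{-1}$ under control as $\lambda\to\infty$. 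The paper does this by bounding $\Pi_\lambda(R_{11}+\Pi_\lambda)^{-2}\Pi_\lambda$ from above via a somewhat lengthy norm estimate (using $(R_{11}+\Pi_\lambda)^{-1}\leq(R_{11}+\Pi_{\lambda_0})^{-1}$), which yields a uniform lower bound on $(I+\Upsilon_\lambda R_{11})(I+\Upsilon_\lambda R_{11})^\top$ and hence invertibility of the limit. Your identity $(I+\Upsilon_\lambda R_{11})^{-1}=\Theta_\lambda\Pi_\lambda=I-\Theta_\lambda R_{11}$ with $\Theta_\lambda=(R_{11}+\Pi_\lambda)^{-1}$ monotone, bounded and convergent is cleaner: it identifies the limit inverse explicitly as $I-\Theta R_{11}$ and gives the uniform bound and the nonnegativity $(I+\Upsilon R_{11})^{-1}\Upsilon=\Theta\geq 0$ in one stroke (the paper obtains the latter separately from $(I+\Upsilon_\lambda R_{11})^{-1}\Upsilon_\lambda=(\Pi_\lambda+R_{11})^{-1}$, which is the same object as your $\Theta_\lambda$). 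Two small points to tidy up: $\Pi_{\lambda_0}$ itself need not be invertible (strict positivity is only guaranteed for $\lambda>\lambda_0$), so the upper bound $\Upsilon_\lambda\leq\Upsilon_{\lambda_0}$ should be replaced by $\Upsilon_\lambda\leq\Upsilon_{\lambda_1}$ for some fixed $\lambda_1>\lambda_0$; and the difference of two solutions of \eqref{eq.RE1} satisfies an equation with a Lipschitz (not literally linear) right-hand side, which is what the Gronwall step actually uses — this matches the paper's uniqueness argument.
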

\begin{proof}
Uniqueness: Suppose  that Riccati equation  \eqref{eq.RE1} admits two solutions $\Upsilon^1$ and  $\Upsilon^2$.   Letting $\delta\Upsilon=\Upsilon^1-\Upsilon^2$, we then have
\begin{equation*}
\begin{aligned}
&\frac{d}{dt}\delta\Upsilon\\=
&\delta\Upsilon A^\top+A\delta\Upsilon-\delta\Upsilon S_1(I+\Upsilon^1R_{11})^{-1}\Upsilon^1 (C^\top+S_1^\top\Upsilon^1)
\\&-(C+\Upsilon^2 S_1)\left[(I+\Upsilon^1 R_{11})^{-1}\Upsilon^1(C^\top+ S_1^\top\Upsilon^1)-(I+\Upsilon^2 R_{11})^{-1}\Upsilon^2(C^\top+ S_1^\top\Upsilon^2)\right]
\\&-\delta\Upsilon S_2 R_{22}^{-1}(B^\top+S_2^\top\Upsilon^1)-(B+\Upsilon^2 S_2)R_{22}^{-1}S_2^\top \delta\Upsilon
\\=&\delta\Upsilon A^\top+A\delta\Upsilon-\delta\Upsilon S_1(I+\Upsilon^1R_{11})^{-1}\Upsilon^1 (C^\top+S_1^\top\Upsilon^1)
\\&-(C+\Upsilon^2 S_1)\Big\{-(I+\Upsilon^1R_{11})^{-1}\delta\Upsilon R_{11}(I+\Upsilon^2R_{11})^{-1} \Upsilon^1 (C^\top+S_1^\top\Upsilon^1)\\&+(I+\Upsilon^2R_{11})^{-1}\left[\delta\Upsilon
(C^\top+S_1^\top\Upsilon^1)+\Upsilon^2S_1^\top \delta\Upsilon\right]\Big\}
\\&-\delta\Upsilon S_2 R_{22}^{-1}(B^\top+S_2^\top\Upsilon^1)-(B+\Upsilon^2 S_2)R_{22}^{-1} S_2^\top \delta\Upsilon
\\\triangleq& f(t,  \delta\Upsilon).
\end{aligned}
\end{equation*}
Recalling that $\delta\Upsilon(T)=0$ and $f(t,x)$ is Lipschitz with respect to $x$, Gronwall's inequality implies that $\delta\Upsilon=0$ for all $t\in [0, T]$.

Existence:  For $\lambda>\lambda_0$,  let  $\Pi_{\lambda}$ and $\widetilde{\Pi}_{\lambda}$ be unique solutions of \eqref{eq: Pi riccati1 simp} and \eqref{eq: Pi riccati2 simp}, respectively. According to Proposition \ref{Th: monoticity},    $$\Pi_{\lambda}(t)> 0, \widetilde{\Pi}_{\lambda}(t)> 0,  \forall t\in [0,  T].$$
Defining $$\Upsilon_{\lambda}=\Pi_{\lambda}^{-1},  \widetilde{\Upsilon}_{\lambda}=\widetilde{\Pi}_{\lambda}^{-1},  $$
we have
$\Upsilon_{\lambda}(t)\geq 0$ and $\widetilde{\Upsilon}_{\lambda}(t)\geq 0$ are decreasing with respect to $\lambda$. 
Assuming that $\Upsilon_{\lambda}$ and $\widetilde{\Upsilon}_{\lambda}$ converge pointwise to $\Upsilon$ and $\widetilde{\Upsilon}$, respectively,  we obtain $\Upsilon\geq 0$ and $\widetilde{\Upsilon}\geq 0$.

We now proceed to prove the following:
\begin{itemize}
\item[(i).] $I+\Upsilon R_{11}, I+\Upsilon \widetilde{R}_{11}$ are invertible on $[0, T]$;
\item[(ii).] $(I+\Upsilon R_{11})^{-1}\in\mathcal L^\infty(0,T;\mathbb R^{n\times n}), (I+\Upsilon \widetilde{R}_{11})^{-1}\in\mathcal L^\infty(0,T;\mathbb R^{n\times n})$;
\item[(iii).] $(I+\Upsilon R_{11})^{-1}\Upsilon\geq 0,  (I+\Upsilon \widetilde{R}_{11})^{-1}\Upsilon\geq 0$;
\item[(iv).] $\Upsilon$ and $\widetilde{\Upsilon}$ satisfy  equations \eqref{eq.RE1} and  \eqref{eq.RE2}, respectively.
\end{itemize}

For (i)  and (ii), we note that for $\lambda>\lambda_0$,
\begin{equation}\label{eq: F-uniconvex}
\begin{aligned}
&\Pi_{\lambda}(I+\Upsilon_{\lambda} R_{11})=\Pi_{\lambda}+R_{11}\gg 0,
\\&\Pi_{\lambda}(I+\Upsilon_{\lambda} \widetilde{R}_{11})=\Pi_{\lambda}+\widetilde{R}_{11}\gg 0,
\end{aligned}
\end{equation}
which imply that $I+\Upsilon_{\lambda} R_{11}, I+\Upsilon_{\lambda} \widetilde{R}_{11}$ are invertible on $[0, T]$.
By Proposition \ref{Th: monoticity},   for $\lambda>\lambda_0$,
$$\Pi_{\lambda}(t)> \Pi_{\lambda_0}(t), \widetilde{\Pi}_{\lambda}(t)> \widetilde{\Pi}_{\lambda_0}(t), \forall t\in [0, T]. $$
Thus, $$0\leq (R_{11}+\Pi_{\lambda})^{-1}\leq (R_{11}+\Pi_{\lambda_0})^{-1},$$
and
$$0\leq (\widetilde{R}_{11}+\Pi_{\lambda})^{-1}\leq (\widetilde{R}_{11}+\Pi_{\lambda_0})^{-1}.$$
From the above arguments, we obtain for every $x\in  \mathbb R^n$
\begin{equation*}
\begin{aligned}
&\langle\Pi_{\lambda}(R_{11}+\Pi_{\lambda})^{-2}\Pi_{\lambda} x,  x\rangle\\=&|(R_{11}+\Pi_{\lambda_0}+\Pi_{\lambda}-\Pi_{\lambda_0})^{-1}
(\Pi_{\lambda}-\Pi_{\lambda_0}+\Pi_{\lambda_0})x|^2
\\\leq &2|(R_{11}+\Pi_{\lambda_0}+\Pi_{\lambda}-\Pi_{\lambda_0})^{-1}
(\Pi_{\lambda}-\Pi_{\lambda_0})x|^2+2|(R_{11}+\Pi_{\lambda_0}+\Pi_{\lambda}-\Pi_{\lambda_0})^{-1}
\Pi_{\lambda_0}x|^2
\\=&2|x-(R_{11}+\Pi_{\lambda_0}+\Pi_{\lambda}-\Pi_{\lambda_0})^{-1}
(R_{11}+\Pi_{\lambda_0})x|^2+2|(R_{11}+\Pi_{\lambda_0}+\Pi_{\lambda}-\Pi_{\lambda_0})^{-1}
\Pi_{\lambda_0}x|^2
\\\leq &4\left[1+\big|(R_{11}+\Pi_{\lambda_0}+\Pi_{\lambda}-\Pi_{\lambda_0})^{-1}\big|^2
\left(|R_{11}+\Pi_{\lambda_0}|^2+|\Pi_{\lambda_0}|\right)^2\right]|x|^2
\\\leq &4\left[1+\big|(R_{11}+\Pi_{\lambda_0})^{-1}\big|^2
\left(|R_{11}+\Pi_{\lambda_0}|^2+|\Pi_{\lambda_0}|\right)^2\right]|x|^2.
\end{aligned}
\end{equation*}
On the other hand,
\begin{equation*}
\begin{aligned}
(\Pi_{\lambda}+R_{11})^{-1}\Pi_{\lambda}(I+\Upsilon_{\lambda} R_{11})(I+\Upsilon_{\lambda} R_{11})^\top\Pi_{\lambda}(\Pi_{\lambda}+R_{11})^{-1}=I,
\end{aligned}
\end{equation*}
which implies that
\begin{equation*}
\begin{aligned}
&(I+\Upsilon_{\lambda} R_{11})(I+\Upsilon_{\lambda} R_{11})^\top
\\=&\left[\Pi_{\lambda}(\Pi_{\lambda}+R_{11})^{-2}\Pi_{\lambda}\right]^{-1}
\\\geq &\frac{1}{4}\left[1+\big|(R_{11}+\Pi_{\lambda_0})^{-1}\big|^2
\left(|R_{11}+\Pi_{\lambda_0}|^2+|\Pi_{\lambda_0}|\right)^2\right]^{-1}I.
\end{aligned}
\end{equation*}
Letting $\lambda\to +\infty$, we get
$$(I+\Upsilon R_{11})(I+\Upsilon R_{11})^\top\geq \frac{1}{4}\left[1+\big|(R_{11}+\Pi_{\lambda_0})^{-1}\big|^2
\left(|R_{11}+\Pi_{\lambda_0}|^2+|\Pi_{\lambda_0}|\right)^2\right]^{-1}I.$$
Thus,  $I+\Upsilon R_{11}$ is invertible on $[0, T]$ and $(I+\Upsilon R_{11})^{-1}\in\mathcal L^\infty(0,T;\mathbb R^{n\times n})$.  $I+\Upsilon\widetilde{ R}_{11}$ is invertible on $[0, T]$ and $(I+\Upsilon \widetilde{R}_{11})^{-1}\in\mathcal L^\infty(0,T;\mathbb R^{n\times n})$ can be derived similarly.

For (iii),   \eqref{eq: F-uniconvex} implies that
\begin{equation*}
\begin{aligned}
&(I+\Upsilon_{\lambda} R_{11})^{-1}\Upsilon_{\lambda}=(\Pi_{\lambda}+R_{11})^{-1}\geq 0,
\\&(I+\Upsilon_{\lambda} \widetilde{R}_{11})^{-1}\Upsilon_{\lambda}=(\Pi_{\lambda}+\widetilde{R}_{11})^{-1}\geq 0.
\end{aligned}
\end{equation*}
Thus
$(I+\Upsilon_{\lambda} R_{11})^{-1}\Upsilon_{\lambda},  (I+\Upsilon_{\lambda} \widetilde{R}_{11})^{-1}\Upsilon_{\lambda}$ are both decreasing with respect to $\lambda$ and bounded
below by zero. Letting $\lambda\to +\infty$, we obtain
$$(I+\Upsilon R_{11})^{-1}\Upsilon\geq 0,
(I+\Upsilon\widetilde{R}_{11})^{-1}\Upsilon\geq 0.$$

In the following, we prove that $\Upsilon$ and $\widetilde{\Upsilon}$ satisfy  equations \eqref{eq.RE1} and  \eqref{eq.RE2}, respectively.  From the definition of $\Upsilon_{\lambda}$, we get
$$\frac{d}{dt}\Upsilon_{\lambda}(t)\Pi_{\lambda}(t)+\Upsilon_{\lambda}(t)\frac{d}{dt}\Pi_{\lambda}(t)
=\frac{d}{dt}\left(\Upsilon_{\lambda}(t)\Pi_{\lambda}(t)\right)=0,$$
which implies that
\begin{equation*}
\begin{aligned}
&\dot{\Upsilon}_{\lambda}
\\=&\Upsilon_{\lambda}\left[\Pi_{\lambda}A+A^\top \Pi_{\lambda}-\left(\setlength{\arraycolsep}{0.7pt}\begin{array}{c}
C^\top\Pi_{\lambda}+S_1^\top\\B^\top\Pi_{\lambda}+S_2^\top\end{array}\right)^\top\left(\setlength{\arraycolsep}{0.7pt}\begin{array}{cc}
R_{11}+\Pi_{\lambda}&0\\0&R_{22}\end{array}\right)^{-1}\left(\setlength{\arraycolsep}{0.7pt}\begin{array}{c}
C^\top \Pi_{\lambda}+S_1^\top\\B^\top \Pi_{\lambda}+S_2^\top\end{array}\right)\right]\Upsilon_{\lambda}
\\=&A\Upsilon_{\lambda}+\Upsilon_{\lambda} A^\top
-\left(\setlength{\arraycolsep}{0.7pt}\begin{array}{c}
C^\top+S_1^\top\Upsilon_{\lambda}\\B^\top+S_2^\top\Upsilon_{\lambda}\end{array}\right)^\top
\left(\setlength{\arraycolsep}{0.7pt}\begin{array}{cc}
R_{11}+\Pi_{\lambda}&0\\0&R_{22}\end{array}\right)^{-1}\left(\setlength{\arraycolsep}{0.7pt}\begin{array}{c}
C^\top+S_1^\top\Upsilon_{\lambda}\\B^\top+S_2^\top\Upsilon_{\lambda}\end{array}\right)
\\=&A\Upsilon_{\lambda}+\Upsilon_{\lambda} A^\top
-\left(\setlength{\arraycolsep}{0.7pt}\begin{array}{c}
C^\top+S_1^\top\Upsilon_{\lambda}\\B^\top+S_2^\top\Upsilon_{\lambda}\end{array}\right)^\top
\left(\setlength{\arraycolsep}{0.7pt}\begin{array}{cc}(I+\Upsilon_{\lambda} R_{11})^{-1}\Upsilon_{\lambda}& 0\\0&R_{22}^{-1}\end{array}\right)\left(\setlength{\arraycolsep}{0.7pt}\begin{array}{c}
C^\top+S_1^\top\Upsilon_{\lambda}\\B^\top+S_2^\top\Upsilon_{\lambda}\end{array}\right).
\end{aligned}
\end{equation*}
Consequently,
\begin{equation*}
\begin{aligned}
&\Upsilon_{\lambda}(t)\\=&\frac{1}{\lambda}I
-\int_t^T\left[A\Upsilon_{\lambda}+\Upsilon_{\lambda} A^\top
-\left(\begin{array}{c}
C^\top+S_1^\top\Upsilon_{\lambda}\\B^\top+S_2^\top\Upsilon_{\lambda}\end{array}\right)^\top
\left(\begin{array}{cc}(I+\Upsilon_{\lambda} R_{11})^{-1}\Upsilon_{\lambda}& 0\\0&R_{22}^{-1}\end{array}\right)\left(\begin{array}{c}
C^\top+S_1^\top\Upsilon_{\lambda}\\B^\top+S_2^\top\Upsilon_{\lambda}\end{array}\right) \right]ds.
\end{aligned}
\end{equation*}
Letting $\lambda\to +\infty$, then  $\Upsilon(\cdot)$ is given by
\begin{equation*}
\begin{aligned}
\Upsilon(t)=-\int_t^T\left[A\Upsilon+\Upsilon A^\top
-\left(\begin{array}{c}
C^\top+S_1^\top\Upsilon\\B^\top+S_2^\top\Upsilon\end{array}\right)^\top
\left(\begin{array}{cc}(I+\Upsilon R_{11})^{-1}\Upsilon& 0\\0&R_{22}^{-1}\end{array}\right)\left(\begin{array}{c}
C^\top+S_1^\top\Upsilon\\B^\top+S_2^\top\Upsilon\end{array}\right) \right]ds,
\end{aligned}
\end{equation*}
which definitely satisfies Riccati equation \eqref{eq.RE1}. Following a similar procedure as above, we  obtain $\widetilde{\Upsilon}$ satisfies Riccati equation  \eqref{eq.RE2}.
\end{proof}
We now introduce the following mean-field BSDE.
\begin{equation}\label{eq.BSDE1}
\left\{
\begin{aligned}
&d\eta=\Big[A(\eta-\mathbb E [\eta])+\widetilde{A}\mathbb E [\eta]+f+\Upsilon (q-\mathbb E[q])+\widetilde{\Upsilon}\mathbb E[q]\\&\ \ -\left(\begin{array}{cc}C^\top+S_1^\top\Upsilon\\B^\top+S_2^\top\Upsilon \end{array}\right)^\top\left(\begin{array}{cc}I+\Upsilon R_{11}& 0\\0&R_{22}\end{array}\right)^{-1}\left(\begin{array}{c}\Upsilon S_1^\top(\eta-\mathbb E[\eta])+\Upsilon (\rho_1-\mathbb E[\rho_1])-(\beta-\mathbb E[\beta])\\S_2^\top(\eta-\mathbb E[\eta])+(\rho_2-\mathbb E[\rho_2])\end{array}\right)
\\&\ \ -\left(\begin{array}{c}\widetilde{C}^\top+\widetilde{S}_1^\top\widetilde{\Upsilon}
\\\widetilde{B}^\top+\widetilde{S}_2^\top\widetilde{\Upsilon}\end{array}\right)^\top\left(\begin{array}{cc}I+\Upsilon \widetilde{R}_{11}& 0\\0&\widetilde{R}_{22}\end{array}\right)^{-1}\left(\begin{array}{c}\Upsilon \widetilde{S}_1^\top\mathbb E[\eta]+\Upsilon \mathbb E[\rho_1]-\mathbb E [\beta]\\\widetilde{S}_2^\top\mathbb E [\eta]+\mathbb E[\rho_2]\end{array}\right)\Bigg]dt+\beta dW, \\
&\eta(T)=\ \zeta.
\end{aligned}
\right.
\end{equation}
\begin{theorem}\label{Th: no-feedback}
Suppose  Assumptions $A1$-$A3$ and \eqref{eq: simcoeff} hold. Let $\Upsilon$ and $\widetilde{\Upsilon}$ be solutions of Riccati equations \eqref{eq.RE1} and  \eqref{eq.RE2}, respectively. Let $(\eta, \beta)$ be an adapted solution of mean-field BSDE \eqref{eq.BSDE1}. Then the optimal control of Problem (BLQ) is given by
\begin{equation}\label{optimal control}
\begin{aligned}
u=&-R_{22}^{-1}\left[(B^\top +S_2^\top\Upsilon)(X-\mathbb E[X])+S_2^\top(\eta-\mathbb E[\eta])+(\rho_2-\mathbb E[\rho_2]) \right]
\\&-\widetilde{R}_{22}^{-1}\left[(\widetilde{B}^\top +\widetilde{S}_2^\top\widetilde{\Upsilon})\mathbb E[X]+\widetilde{S}_2^\top\mathbb E[\eta]+\mathbb E[\rho_2]\right],
\end{aligned}
\end{equation}
where $X$ is given by
\begin{equation}\label{adjoint decouple}
\left\{\begin{aligned}
&dX=-\bigg\{\left[A^\top-S_1(I+\Upsilon R_{11})^{-1}\Upsilon(C^\top+S_1^\top \Upsilon)-S_2R_{22}^{-1}(B^\top+S_2^\top \Upsilon)\right](X-\mathbb E[X])
\\&\qquad-S_1(I+\Upsilon R_{11})^{-1}\left[\Upsilon S_1^\top (\eta-\mathbb E[\eta])+\Upsilon(\rho_1-\mathbb E[\rho_1])-(\beta-\mathbb E[\beta])\right]\\&\qquad-S_2 R_{22}^{-1}\Big(S_2^\top (\eta-\mathbb E[\eta])+(\rho_2-\mathbb E[\rho_2])\Big)
\\&\qquad+\left[\widetilde{A}^\top-\widetilde{S}_1(I+\Upsilon \widetilde{R}_{11})^{-1}\Upsilon(\widetilde{C}^\top+\widetilde{S}_1^\top \widetilde{\Upsilon})-\widetilde{S}_2\widetilde{R}_{22}^{-1}(\widetilde{B}^\top+\widetilde{S}_2^\top \widetilde{\Upsilon})\right]\mathbb E[X]
\\&\qquad-\widetilde{S}_1(I+\Upsilon \widetilde{R}_{11})^{-1}\left[\Upsilon \widetilde{S}_1^\top \mathbb E[\eta]+\Upsilon\mathbb E[\rho_1]-\mathbb E[\beta]\right]-\widetilde{S}_2 \widetilde{R}_{22}^{-1}\Big(\widetilde{S}_2^\top\mathbb E[\eta]+\mathbb E[\rho_2]\Big)+q\bigg\}dt
\\&\qquad-\bigg\{(I+ R_{11}\Upsilon)^{-1}\left[(C^\top+S_1^\top \Upsilon)(X-\mathbb E[X])
+S_1^\top (\eta-\mathbb E[\eta])
+(\rho_1-\mathbb E[\rho_1])+R_{11}(\beta-\mathbb E[\beta])\right]
\\&\qquad+(I+ \widetilde{R}_{11}\Upsilon)^{-1}\left[(\widetilde{C}^\top+\widetilde{S}_1^\top \widetilde{\Upsilon})\mathbb E[X]
+\widetilde{S}_1^\top \mathbb E[\eta]
+\mathbb E[\rho_1]+\widetilde{R}_{11}\mathbb E[\beta]\right]
\bigg\}dW,
\\&X(0)=-g.  
\end{aligned}
\right.
\end{equation}
Moreover, the value function of Problem (BLQ) is
\begin{equation}\label{eq: optimal cost fuctional}
\begin{aligned}
V(\zeta)=&\frac{1}{2}\mathbb E\left[\langle g,2\eta(0)-\Upsilon(0)(g-\mathbb E[g])-\widetilde{\Upsilon}(0)\mathbb E[g]\rangle\right.
\\&+\int_0^T\Big\{\langle\rho_1,-(I+\Upsilon R_{11})^{-1}\Upsilon(\rho_1-\mathbb E[\rho_1])-(I+\Upsilon \widetilde{R}_{11})^{-1}\Upsilon\mathbb E[\rho_1]\rangle
\\&-\langle\rho_2,R_{22}^{-1}(\rho_2-\mathbb E[\rho_2])+\widetilde{R}_{22}^{-1}\mathbb E[\rho_2]\rangle
+\langle\beta,(I+ R_{11}\Upsilon)^{-1}R_{11}(\beta-\mathbb E[\beta])+(I+ \widetilde{R}_{11}\Upsilon)^{-1}\widetilde{R}_{11}\mathbb E[\beta]\rangle
\\&-\langle\eta,\left(S_1(I+\Upsilon R_{11})^{-1}\Upsilon S_1^\top+S_2R_{22}^{-1}S_2^\top\right)(\eta-\mathbb E[\eta])+\left(\widetilde{S}_1(I+\Upsilon \widetilde{R}_{11})^{-1}\Upsilon \widetilde{S}_1^\top+\widetilde{S}_2\widetilde{R}_{22}^{-1}\widetilde{S}_2^\top\right)\mathbb E[\eta]\rangle
\\&+2\langle \rho_1,(I+\Upsilon R_{11})^{-1}(\beta-\mathbb E[\beta])+(I+\Upsilon \widetilde{R}_{11})^{-1}\mathbb E[\beta]\rangle
\\&-2\langle\eta,S_1(I+\Upsilon R_{11})^{-1}[\Upsilon(\rho_1-\mathbb E[\rho_1])-(\beta-\mathbb E[\beta])]+S_2R_{22}^{-1}(\rho_2-\mathbb E[\rho_2])\rangle
\\&\left.-2\langle\eta,\widetilde{S}_1(I+\Upsilon \widetilde{R}_{11})^{-1}[\Upsilon\mathbb E[\rho_1]-\mathbb E[\beta]]+\widetilde{S}_2\widetilde{R}_{22}^{-1}\mathbb E[\rho_2]-q\rangle\Big\}dt\right].
\end{aligned}
\end{equation}
\end{theorem}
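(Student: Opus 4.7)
The plan is to verify that the triple $(u^*, Y^*, Z^*)$ with $u^*$ given by \eqref{optimal control} and $Y^*, Z^*$ constructed below meets the necessary and sufficient optimality conditions of Theorem \ref{necessary condition}. Condition (i) there, namely $J_0(0; u)\geq 0$, follows directly from Assumption $A3$. The core of the argument is to decouple the mean-field FBSDE \eqref{eq: Hamiltonian} via the ansatz
\begin{equation*}
Y^*(t)\;=\;-\Upsilon(t)\bigl(X^*(t)-\mathbb{E}[X^*(t)]\bigr)\;-\;\widetilde{\Upsilon}(t)\,\mathbb{E}[X^*(t)]\;+\;\eta(t),
\end{equation*}
whose form is motivated by Corollary \ref{coro: 2} together with the identification of $\Upsilon,\widetilde{\Upsilon}$ as the pointwise limits of $\Pi_\lambda^{-1},\widetilde{\Pi}_\lambda^{-1}$ established in Theorem \ref{Th: solva RE backward}. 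Because $\Upsilon(T)=\widetilde{\Upsilon}(T)=0$ and $\eta(T)=\zeta$, the terminal condition $Y^*(T)=\zeta$ is automatic.

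To carry out the verification, first define $X^*$ as the solution of the forward SDE \eqref{adjoint decouple} driven by the data $(\eta,\beta)$ coming from the BSDE \eqref{eq.BSDE1}; under \eqref{eq: simcoeff} one has $X^*(0)=-g$. Then apply It\^o's formula to the ansatz, using the dynamics of $X^*$ and $(\eta,\beta)$, and split each quantity into its centered and mean parts. Matching the diffusion coefficient with the $Z^*$ required in the forward line of \eqref{eq: Hamiltonian} determines $Z^*$ explicitly (it is exactly the $dW$-coefficient displayed in \eqref{adjoint decouple}), and substituting this $Z^*$ and the ansatz for $Y^*$ into the stationarity condition, together with the invertibility of $R_{22},\widetilde{R}_{22}$ (which holds by Schur's lemma under Assumption $A3$), yields precisely the formula \eqref{optimal control} for $u^*$. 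Matching the drift coefficient is the computational heart of the argument: the coefficients quadratic in $(X^*-\mathbb{E}[X^*])$ and in $\mathbb{E}[X^*]$ cancel by virtue of Riccati equations \eqref{eq.RE1} and \eqref{eq.RE2}, while the remaining non-homogeneous contributions in $f,q,\rho_1,\rho_2$ match the drift of \eqref{eq.BSDE1} that defines $(\eta,\beta)$. Finally, inserting $Y^*,Z^*,u^*$ into the first line of \eqref{eq: Hamiltonian} recovers \eqref{adjoint decouple} itself, closing the loop.

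For the value function, I would apply It\^o's formula to $\langle X^*(t),Y^*(t)\rangle$ on $[0,T]$. The terminal contribution $\mathbb{E}\langle X^*(T),\zeta\rangle$ is handled by evaluating the ansatz at $t=T$ and invoking the boundary condition $X^*(0)=-g$, producing the term $\mathbb{E}\langle g,\,2\eta(0)-\Upsilon(0)(g-\mathbb{E}[g])-\widetilde{\Upsilon}(0)\mathbb{E}[g]\rangle$ that appears in \eqref{eq: optimal cost fuctional}. The integral contribution, after inserting \eqref{optimal control} and using the stationarity condition to eliminate the $u^*$-dependent quadratic terms, simplifies to precisely the linear and quadratic expressions in $\eta,\beta,\rho_1,\rho_2,q$ appearing inside \eqref{eq: optimal cost fuctional}. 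Combining the boundary and integral contributions with $V(\zeta)=J(\zeta;u^*)$ and the definition \eqref{cost functional} of the cost functional delivers the stated formula.

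The main obstacle is the bookkeeping. The mean-field setting forces one to track centered and mean components in parallel, and the simultaneous presence of cross terms $S_1,S_2,\widetilde{S}_1,\widetilde{S}_2$ and non-homogeneous terms $f,q,\rho_1,\rho_2$ produces a large number of summands that must be meticulously regrouped to match \eqref{eq.RE1}, \eqref{eq.RE2} and \eqref{eq.BSDE1}. Throughout, two algebraic identities will be used repeatedly to reconcile the equivalent forms that naturally appear on the two sides of the drift-matching, namely $(I+\Upsilon R_{11})^{-1}\Upsilon=\Upsilon(I+R_{11}\Upsilon)^{-1}$ and its analogue with $\widetilde{R}_{11},\widetilde{\Upsilon}$, which were already exploited in the proof of Theorem \ref{Th: solva RE backward}.
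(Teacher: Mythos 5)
Your overall strategy is exactly the paper's: decouple the FBSDE of Theorem \ref{necessary condition} by an affine ansatz relating the adjoint process $X$ to $(Y,Z)$, use the Riccati equations \eqref{eq.RE1}--\eqref{eq.RE2} to cancel the quadratic terms in the drift matching, read off $u$ from the stationarity condition, and obtain the value by applying It\^o's formula to $\langle X,Y\rangle$. However, your ansatz carries the wrong sign. You posit $Y^*=-\Upsilon(X^*-\mathbb{E}[X^*])-\widetilde{\Upsilon}\,\mathbb{E}[X^*]+\eta$, whereas the correct (and the paper's) relation is $Y^*=+\Upsilon(X^*-\mathbb{E}[X^*])+\widetilde{\Upsilon}\,\mathbb{E}[X^*]+\eta$. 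This is not cosmetic: with your sign the centered stationarity condition $B^\top(X-\mathbb{E}[X])+S_2^\top(Y-\mathbb{E}[Y])+R_{22}(u-\mathbb{E}[u])+(\rho_2-\mathbb{E}[\rho_2])=0$ yields $u-\mathbb{E}[u]=-R_{22}^{-1}\bigl[(B^\top-S_2^\top\Upsilon)(X-\mathbb{E}[X])+\cdots\bigr]$, which contradicts \eqref{optimal control}; the quadratic terms produced by It\^o's formula then fail to cancel against \eqref{eq.RE1}--\eqref{eq.RE2} (those equations are written for the $+\Upsilon$ field and are not invariant under $\Upsilon\mapsto-\Upsilon$); and the boundary contribution to the value function becomes $+\Upsilon(0)(g-\mathbb{E}[g])+\widetilde{\Upsilon}(0)\mathbb{E}[g]$ rather than the $-\Upsilon(0)(g-\mathbb{E}[g])-\widetilde{\Upsilon}(0)\mathbb{E}[g]$ appearing in \eqref{eq: optimal cost fuctional}. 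So as written, every one of your three matching steps would fail. The fix is simply to flip the sign of the ansatz, after which your plan coincides with the paper's verification.

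A secondary imprecision: $Z^*$ is not ``exactly the $dW$-coefficient displayed in \eqref{adjoint decouple}.'' That coefficient is the diffusion of the adjoint process $X$, i.e.\ the quantity $-(C^\top X+\widehat{C}^\top\mathbb{E}[X]+S_1^\top Y+\widehat{S}_1^\top\mathbb{E}[Y]+R_{11}Z+\widehat{R}_{11}\mathbb{E}[Z]+\rho_1)$ from \eqref{eq: Hamiltonian}. The process $Z^*$ is instead the diffusion of $Y^*$, obtained from $Z=\Upsilon\cdot(\text{diffusion of }X)+\beta$ by solving the resulting linear equation for $Z$, which produces the $(I+\Upsilon R_{11})^{-1}$ and $(I+\Upsilon\widetilde{R}_{11})^{-1}$ factors; the $dW$-coefficient of \eqref{adjoint decouple} is what you get after substituting that $Z^*$ back. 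You will need this distinction to carry out the drift and diffusion matching correctly.
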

\begin{proof}
Define
\begin{equation*}
\begin{aligned}
Y=&\ \Upsilon(X-\mathbb E[X])+\widetilde{\Upsilon}\mathbb E[X]+\eta,
\\Z=&-(I+\Upsilon R_{11})^{-1}\left[\Upsilon(C^\top +S_1^\top\Upsilon)(X-\mathbb E[X])+\Upsilon S_1^\top(\eta-\mathbb E[\eta])+\Upsilon(\rho_1-\mathbb E[\rho_1])-(\beta-\mathbb E[\beta])\right]
\\&-(I+\Upsilon \widetilde{R}_{11})^{-1}\left[\Upsilon(\widetilde{C}^\top +\widetilde{S}_1^\top\widetilde{\Upsilon})\mathbb E[X]+\Upsilon \widetilde{S}_1^\top\mathbb E[\eta]+\Upsilon\mathbb E[\rho_1]-\mathbb E[\beta]\right].
\end{aligned}
\end{equation*}
With these notations, \eqref{adjoint decouple} can be rewritten as
\begin{equation*}
\left\{\begin{aligned}
&dX=-\left[A^\top X+\widehat{A}^\top \mathbb E[X]+S_1Z+\widehat{S}_1 \mathbb E[Z]+S_2u+\widehat{S}_2\mathbb E[u]+q\right]dt
\\&\qquad\ \ -\left[C^\top X+\widehat{C}^\top \mathbb E[X]+S_1^\top Y+\widehat{S}_1^\top \mathbb E[Y]+R_{11} Z+\widehat{R}_{11}\mathbb E[Z]+\rho_1\right]dW,
\\&X(0)=-g.
\end{aligned}
\right.
\end{equation*}
Further, by It\^o formula, we have
\begin{equation*}
\begin{aligned}
dY=&\dot{\Upsilon}(X-\mathbb E[X])dt
-\Upsilon\left[A^\top(X-\mathbb E[X])+S_1(Z-\mathbb E[Z])+S_2(u-\mathbb E[u])+(q-\mathbb E[q])\right]dt
\\&-\Upsilon\left[C^\top X+\widehat{C}^\top \mathbb E[X]+S_1^\top Y+\widehat{S}_1^\top \mathbb E[Y]+R_{11} Z+\widehat{R}_{11}\mathbb E[Z]+\rho_1\right]dW
\\&+\dot{\widetilde{\Upsilon}}\mathbb E[X]dt-\widetilde{\Upsilon}\left[\widetilde{A}^\top \mathbb E[X]+\widetilde{S}_1 \mathbb E[Z]+\widetilde{S}_2\mathbb E[u]+\mathbb E[q]\right]dt+d\eta.
\end{aligned}
\end{equation*}
Through some straightforward  calculations, we derive
\begin{equation*}
\left\{\begin{aligned}
&dY=\ \left[AY+\widehat{A}\mathbb E[Y]+Bu+\widehat{B}\mathbb E[u]+CZ+\widehat{C}\mathbb E[Z]+f\right]dt+ZdW,
\\&Y(T)= \zeta.
\end{aligned}
\right.
\end{equation*}
Moreover,
$$B^\top X+\widehat{B}^\top \mathbb E[X]+S_2^\top Y+\widehat{S}_2^\top \mathbb E[Y]+R_{22}u+\widehat{R}_{22}\mathbb E[u]+\rho_2=0.$$
Theorem \ref{necessary condition} implies that the optimal control of  Problem (BLQ) is given by \eqref{optimal control}. In the following, we prove that the value function of Problem (BLQ) is given by \eqref{eq: optimal cost fuctional}. Indeed,
\begin{equation*}
\begin{aligned}
V(\zeta)
=&\frac{1}{2}\mathbb E\left[2\langle g,  Y(0)\rangle+\int_{0}^T\Big(2\langle q,Y\rangle+2\langle\rho_1, Z\rangle+2\langle\rho_2,u\rangle+2\langle S_1Z,  Y\rangle+2\langle S_2 u,  Y\rangle+\langle R_{11}Z, Z\rangle\right.\\&\left.+\langle R_{22}u, u\rangle
+2\langle \widehat{S}_1\mathbb E[Z],  \mathbb E[Y]\rangle+2\langle \widehat{S}_2 \mathbb E[u],  \mathbb E[Y]\rangle+\langle \widehat{R}_{11}\mathbb E[Z],  \mathbb E[Z]\rangle+\langle \widehat{R}_{22}\mathbb E[u], \mathbb E[u]\rangle\Big)dt\right]
\\=&\frac{1}{2}\mathbb E\left[\langle g, Y(0)\rangle-\langle X(T), Y(T)\rangle+\int_{0}^T\Big(\langle f,X\rangle+\langle q,Y\rangle+\langle\rho_1, Z\rangle+\langle\rho_2,u\rangle\Big)dt\right].
\end{aligned}
\end{equation*}
From equations \eqref{adjoint decouple} and \eqref{eq.BSDE1}, we have
\begin{equation*}
\begin{aligned}
&\mathbb E[\langle X(T), Y(T)\rangle]\\=&\mathbb E[\langle X(T), \eta(T)\rangle]
\\=&\mathbb E[\langle X(0), \eta(0)\rangle]+\mathbb E\left[\int_0^T\langle X, f+\Upsilon(q-\mathbb E[q])+\widetilde{\Upsilon}\mathbb E[q]-(C+\Upsilon S_1)(I+\Upsilon R_{11})^{-1}\Upsilon(\rho_1-\mathbb E[\rho_1])\right.\\&-(B+\Upsilon S_2) R_{22}^{-1}(\rho_2-\mathbb E[\rho_2])
-(\widetilde{C}+\widetilde{\Upsilon} \widetilde{S}_1)(I+\Upsilon \widetilde{R}_{11})^{-1}\Upsilon\mathbb E[\rho_1]-(\widetilde{B}+\widetilde{\Upsilon} \widetilde{S}_2) \widetilde{R}_{22}^{-1}\mathbb E[\rho_2]\rangle
\\&+\langle\eta,\left(S_1(I+\Upsilon R_{11})^{-1}\Upsilon S_1^\top+S_2R_{22}^{-1}S_2^\top\right)(\eta-\mathbb E[\eta])+\left(\widetilde{S}_1(I+\Upsilon \widetilde{R}_{11})^{-1}\Upsilon \widetilde{S}_1^\top+\widetilde{S}_2\widetilde{R}_{22}^{-1}\widetilde{S}_2^\top\right)\mathbb E[\eta]\rangle
\\&+\langle \eta,S_1(I+\Upsilon R_{11})^{-1}[\Upsilon(\rho_1-\mathbb E[\rho_1])-2(\beta-\mathbb E[\beta])]+S_2R_{22}^{-1}(\rho_2-\mathbb E[\rho_2])
\\&+\widetilde{S}_1(I+\Upsilon \widetilde{R}_{11})^{-1}[\Upsilon\mathbb E[\rho_1]-2\mathbb E[\beta]]+\widetilde{S}_2\widetilde{R}_{22}^{-1}\mathbb E[\rho_2]-q\rangle
\\&-\langle\beta,(I+R_{11}\Upsilon)^{-1}[(\rho_1-\mathbb E[\rho_1])+R_{11}(\beta-\mathbb E[\beta])]+(I+\widetilde{R}_{11}\Upsilon)^{-1}(\mathbb E[\rho_1]+\widetilde{R}_{11}\mathbb E[\beta])\rangle dt\Big].
\end{aligned}
\end{equation*}
Combining the above equations, we obtain $V(\zeta)$ satisfies \eqref{eq: optimal cost fuctional}.
\end{proof}

\section{Sufficiency of  Riccati equations}
In the above, we construct the optimal control of Problem (BLQ) under Assumption $A3$. However, it is not easy to determine whether Assumption $A3$ is satisfied for a general LQ control problem of mean-field BSDE. In this section, we give a sufficient condition, which ensures the  uniform convexity of cost functional $J(\zeta;u)$. 
\begin{theorem}\label{th: sufficient}
Let Assumptions $A1$-$A2$ and  \eqref{eq: simcoeff} hold. If $R_{22}\gg 0,  \widetilde{R}_{22}\gg 0$, Riccati equations \eqref{eq.RE1} and  \eqref{eq.RE2} admit unique solutions $\Upsilon\geq 0, \widetilde{\Upsilon}\geq 0$ 
such that $I+\Upsilon R_{11}, I+\Upsilon \widetilde{R}_{11}$ are invertible on $[0, T]$,  $(I+\Upsilon R_{11})^{-1}, (I+\Upsilon \widetilde{R}_{11})^{-1}\in\mathcal L^\infty(0,T;\mathbb R^{n\times n})$ and $(I+\Upsilon R_{11})^{-1}\Upsilon\geq 0, (I+\Upsilon \widetilde{R}_{11})^{-1}\Upsilon\geq 0$, then there exists a constant $\alpha>0$, such that
\begin{align*}
J_0(0;  u)\geq \alpha \mathbb E\left[\int_{0}^T|u|^2dt\right],   \forall u\in \mathcal U[0, T].
\end{align*}
\end{theorem}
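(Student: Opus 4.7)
The plan is to use the equivalent cost functional method together with a duality argument between the backward state $Y_0$ and a forward adjoint process $X$. For any $u\in\mathcal U[0,T]$, let $(Y_0,Z_0)$ be the corresponding solution of the homogeneous state equation (with $\zeta=0$ and $f=0$). Following the adjoint structure in \eqref{eq: Hamiltonian} but with $q,\rho_1,g$ set to zero and with $(Y_0,Z_0,u)$ in place of $(Y^*,Z^*,u^*)$, introduce the forward SDE
\begin{equation*}
\begin{aligned}
dX &= -\bigl(A^\top X+\widehat A^\top\mathbb E[X]+S_1Z_0+\widehat S_1\mathbb E[Z_0]+S_2u+\widehat S_2\mathbb E[u]\bigr)\,dt \\
&\quad -\bigl(C^\top X+\widehat C^\top\mathbb E[X]+S_1^\top Y_0+\widehat S_1^\top\mathbb E[Y_0]+R_{11}Z_0+\widehat R_{11}\mathbb E[Z_0]\bigr)\,dW,\qquad X(0)=0.
\end{aligned}
\end{equation*}
This equation has a unique strong solution in $\mathcal S_{\mathbb F}^2(0,T;\mathbb R^n)$ depending linearly on $u$.

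First I apply It\^o's formula to $\langle X,Y_0\rangle$ on $[0,T]$. The boundary contributions vanish because $X(0)=0$ and $Y_0(T)=0$; the $A,\widehat A,C,\widehat C$ contributions cancel pairwise by duality of adjoints, and the remaining terms can be combined with the running cost of $J_0(0;u)$ (using $\bar v:=v-\mathbb E[v]$ and the splittings $\mathbb E\langle R_{11}Z_0,Z_0\rangle=\mathbb E\langle R_{11}\bar Z,\bar Z\rangle+\langle R_{11}\mathbb E[Z_0],\mathbb E[Z_0]\rangle$ etc.). This yields the identity
\begin{equation*}
2J_0(0;u)=\mathbb E\int_0^T\langle R_{22}\bar u+B^\top\bar X+S_2^\top\bar Y,\bar u\rangle\,dt+\int_0^T\langle \widetilde R_{22}\mathbb E[u]+\widetilde B^\top\mathbb E[X]+\widetilde S_2^\top\mathbb E[Y_0],\mathbb E[u]\rangle\,dt.
\end{equation*}
Next I apply It\^o's formula to $\langle \Upsilon\bar X,\bar X\rangle+\langle\widetilde\Upsilon\mathbb E[X],\mathbb E[X]\rangle$. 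The crucial point is that the sign structure $\dot\Upsilon-\Upsilon A^\top-A\Upsilon+\mathcal Q_1=0$ of \eqref{eq.RE1} is tailored precisely to the drift $-A^\top X$ of the forward adjoint (if one tried instead to use $\langle\Upsilon\bar Y,\bar Y\rangle$ directly, there would be a sign mismatch between $A^\top\Upsilon+\Upsilon A$ from It\^o and $\Upsilon A^\top+A\Upsilon$ from the Riccati). Since $X(0)=0$ and $\Upsilon(T)=\widetilde\Upsilon(T)=0$, both endpoint contributions vanish, and after substituting the Riccati equations \eqref{eq.RE1}--\eqref{eq.RE2} the identity expresses $\mathbb E\int_0^T\sigma_X^\top\Upsilon\sigma_X\,dt$ in terms of $\mathbb E\int_0^T\langle\mathcal Q_1\bar X,\bar X\rangle\,dt$, its mean analogue, and cross terms linear in $\bar u,\mathbb E[u]$.

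Substituting this second identity back into the expression for $2J_0(0;u)$, I complete the square block-by-block using the Schur complement structure of the Riccati quadratic terms. The $(I+\Upsilon R_{11})^{-1}\Upsilon\geq0$ and $(I+\Upsilon\widetilde R_{11})^{-1}\Upsilon\geq0$ properties assert the non-negativity of the $\bar Z$- and $\mathbb E[Z_0]$-blocks that arise from the It\^o correction $\sigma_X^\top\Upsilon\sigma_X$, while $R_{22}\gg0$ and $\widetilde R_{22}\gg0$ provide the $\bar u$- and $\mathbb E[u]$-blocks. The cross terms $\langle B^\top\bar X+S_2^\top\bar Y,\bar u\rangle$ and their mean analogues are exactly absorbed by the Riccati quadratics, leaving a residual bound
\begin{equation*}
J_0(0;u)\geq \tfrac12\mathbb E\!\int_0^T\langle R_{22}\bar u,\bar u\rangle\,dt+\tfrac12\!\int_0^T\langle\widetilde R_{22}\mathbb E[u],\mathbb E[u]\rangle\,dt\geq\alpha\,\mathbb E\!\int_0^T|u|^2dt,
\end{equation*}
where the last step uses $R_{22},\widetilde R_{22}\gg0$ together with $\mathbb E|u|^2=\mathbb E|\bar u|^2+|\mathbb E[u]|^2$.

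The main obstacle is the algebraic bookkeeping in the completion-of-square step: one must verify that the Riccati quadratic $\mathcal Q_1$ (together with the It\^o correction) exactly cancels the cross terms produced by the first identity. The subtle feature to watch is the asymmetric coupling in \eqref{eq.RE2}, where $I+\Upsilon\widetilde R_{11}$ (not $I+\widetilde\Upsilon\widetilde R_{11}$) appears; this is not a typo but reflects the fact that the $Z_0$-weighting transfers through the decomposition $\mathbb E\langle\Upsilon Z_0,Z_0\rangle=\mathbb E\langle\Upsilon\bar Z,\bar Z\rangle+\langle\Upsilon\mathbb E[Z_0],\mathbb E[Z_0]\rangle$, so that $\Upsilon$ rather than $\widetilde\Upsilon$ naturally weights the mean-part $Z$-block.
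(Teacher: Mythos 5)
Your first duality identity (Itô on $\langle X,Y_0\rangle$) is correct, and the overall strategy of completing squares against the Riccati equations is reasonable. But the final step contains a genuine error: the completion of squares does \emph{not} leave the residual
\begin{equation*}
J_0(0;u)\ \geq\ \tfrac12\,\mathbb E\!\int_0^T\langle R_{22}\bar u,\bar u\rangle\,dt+\tfrac12\!\int_0^T\langle\widetilde R_{22}\mathbb E[u],\mathbb E[u]\rangle\,dt,
\end{equation*}
and this inequality is in fact false. The square that survives is $\langle R_{22}(\bar u-\bar u^{\diamond}),\bar u-\bar u^{\diamond}\rangle$ with $\bar u^{\diamond}=-R_{22}^{-1}(B^\top+S_2^\top\Upsilon)\bar X$ (plus a $Z$-block square weighted by $(I+\Upsilon R_{11})^{-1}\Upsilon$ and the mean analogues); the feedback term cannot be discarded because $X$ depends on $u$. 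Concretely, take $n=m=1$, $A=C=S_1=R_{11}=0$, all hatted coefficients zero, $B=R_{22}=1$, $S_2=s$ with $0<sT<1$. Then $\Upsilon(t)=(T-t)/(1-s(T-t))\geq0$ and all hypotheses of the theorem hold, but for deterministic $u$ one computes $Y(t)=-\int_t^Tu\,ds$ and $J_0(0;u)=\tfrac12\big[\int_0^Tu^2dt-s\big(\int_0^Tu\,dt\big)^2\big]$, which for $u\equiv1$, $T=1$, $s=0.99$ equals $0.005$, far below the claimed bound $\tfrac12\int_0^Tu^2dt=\tfrac12$. So your chain of inequalities breaks at the last display.

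The missing ingredient is exactly the paper's Lemma~\ref{lemma: uniform convex}: the map $u\mapsto u-\Theta(\text{state}-\mathbb E[\text{state}])$ is a bounded bijection of $\mathcal L^2_{\mathbb F}(0,T;\mathbb R^m)$ (its inverse solves a closed-loop equation), so by the bounded inverse theorem $\mathbb E\int_0^T|u-\Theta(\cdot)|^2dt\geq\gamma\,\mathbb E\int_0^T|u|^2dt$ for some $\gamma>0$. Without this step, completion of squares only yields $J_0(0;u)\geq0$, i.e.\ convexity, not \emph{uniform} convexity. For comparison, the paper avoids the forward adjoint altogether: it perturbs the terminal data to $\lambda^{-1}I$ so that $\Pi_\lambda=\Upsilon_\lambda^{-1}$ exists, inserts $(\Pi_\lambda,\widetilde\Pi_\lambda)$ into an equivalent cost functional $J_h$ (adding the total derivative of $\langle H\bar Y,\bar Y\rangle+\langle\widetilde H\,\mathbb E[Y],\mathbb E[Y]\rangle$), so that the resulting quadratic form in $(\bar y,\bar z,\bar u)$ has vanishing Schur complement and reduces to squares of $\bar u+R_{22}^{-1}S_{2,h}^\top\bar y$ and $\mathbb E[u]+\widetilde R_{22}^{-1}\widetilde S_{2,h}^\top\mathbb E[y]$, and \emph{then} invokes Lemma~\ref{lemma: uniform convex} applied to the backward state $y$. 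Your route through the forward adjoint $X$ could be made to work, but you would still need the same operator-bijectivity lemma applied to $u\mapsto u+R_{22}^{-1}(B^\top+S_2^\top\Upsilon)\bar X$, where $X$ solves the (closed-loop) adjoint system with $X(0)=0$; as written, the proof is incomplete.
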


In order to prove Theorem \ref{th: sufficient}, we need the following results.
\begin{lemma}\label{lemma: uniform convex}
Let Assumptions $A1$-$A2$ hold. For any $u\in \mathcal U[0, T]$,
let $(y, z)$ be the solution of
\begin{equation*}
\left\{ \begin{aligned}
dy=&\Big(Ay+\widehat{A}\mathbb E[y]+Bu+\widehat{B}\mathbb E[u]+Cz+\widehat{C}\mathbb E[z]\Big)dt+zdW,  \\
y(T)=&\ 0.
\end{aligned}\right.
\end{equation*}
Then for any $\Theta,  \widetilde{\Theta}\in \mathcal{L}^\infty(0,  T; \mathbb R^{m\times n})$, there exists a constant $\gamma>0$, such that
\begin{equation}\label{eq: uniform convex}
\begin{aligned}
\mathbb E\left[\int_{0}^T|u-\Theta(y-\mathbb E[y])|^2dt\right]&\geq \gamma\mathbb E\left[\int_{0}^T|u|^2dt\right],
\\ \int_{0}^T|\mathbb E[u]-\widetilde{\Theta}\mathbb E[y]|^2dt&\geq \gamma\left[\int_{0}^T|\mathbb E[u]|^2dt\right].
\end{aligned}
\end{equation}
\end{lemma}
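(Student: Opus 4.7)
The plan is to prove each inequality by a feedback-substitution reduction to a standard mean-field BSDE a priori estimate: treat the feedback-adjusted quantity as a new control and absorb the state back into the coefficients of a modified BSDE.

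For the first inequality, set $v = u - \Theta(y - \mathbb{E}[y])$, so that $u = v + \Theta(y - \mathbb{E}[y])$ and $\mathbb{E}[v] = \mathbb{E}[u]$ (since $\mathbb{E}[\Theta(y-\mathbb{E}[y])] = 0$). Substituting into the BSDE of the lemma yields a new mean-field BSDE for $(y,z)$ with bounded coefficients $A + B\Theta$, $\widehat{A} - B\Theta$, $B$, $\widehat{B}$, $C$, $\widehat{C}$ (bounds depending on $\|\Theta\|_\infty$) and control $v$, with the same terminal condition $y(T) = 0$. The standard mean-field BSDE a priori estimate (Theorem 2.1 of \cite{LSX2019}) yields $\mathbb{E}\!\int_0^T|y|^2 dt \leq K\,\mathbb{E}\!\int_0^T|v|^2 dt$. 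Combined with $\mathbb{E}|u|^2 \leq 2\mathbb{E}|v|^2 + 2\|\Theta\|_\infty^2\,\mathbb{E}|y - \mathbb{E}[y]|^2$ and $\mathbb{E}|y - \mathbb{E}[y]|^2 \leq \mathbb{E}|y|^2$, integration in $t$ gives
\begin{equation*}
\mathbb{E}\!\int_0^T|u|^2\,dt \leq (2+2K\|\Theta\|_\infty^2)\,\mathbb{E}\!\int_0^T|v|^2\,dt,
\end{equation*}
so the first inequality holds with $\gamma = (2+2K\|\Theta\|_\infty^2)^{-1}$.

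For the second inequality, I would apply the analogous idea on the mean part. Set $\bar v = \mathbb{E}[u] - \widetilde{\Theta}\mathbb{E}[y]$ and use the \emph{combined substitution} $v = u - \Theta(y - \mathbb{E}[y]) - \widetilde{\Theta}\mathbb{E}[y]$ for a suitable $\Theta$, which satisfies $\mathbb{E}[v] = \bar v$. Plugging $u = v + \Theta(y-\mathbb{E}[y]) + \widetilde{\Theta}\mathbb{E}[y]$ into the BSDE yields another mean-field BSDE with bounded coefficients $A + B\Theta$, $\widehat{A} - B\Theta + \widetilde{B}\widetilde{\Theta}$, $B$, $\widehat{B}$, $C$, $\widehat{C}$ and control $v$; the a priori estimate gives $\mathbb{E}\!\int_0^T|y|^2\,dt \leq K'\,\mathbb{E}\!\int_0^T|v|^2\,dt$, and by Jensen $\int_0^T|\mathbb{E}[y]|^2\,dt \leq K'\,\mathbb{E}\!\int_0^T|v|^2\,dt$. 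Using $\mathbb{E}[u] = \bar v + \widetilde{\Theta}\mathbb{E}[y]$ and a Young-type inequality,
\begin{equation*}
\int_0^T|\mathbb{E}[u]|^2\,dt \leq 2\!\int_0^T|\bar v|^2\,dt + 2K'\|\widetilde{\Theta}\|_\infty^2\,\mathbb{E}\!\int_0^T|v|^2\,dt.
\end{equation*}

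The main obstacle is that the right-hand side contains $\mathbb{E}\!\int|v|^2\,dt = \int|\bar v|^2\,dt + \mathbb{E}\!\int|v-\bar v|^2\,dt$, and the variance contribution $\mathbb{E}\!\int|v-\bar v|^2\,dt = \mathbb{E}\!\int|(u-\mathbb{E}[u]) - \Theta(y-\mathbb{E}[y])|^2\,dt$ is not a priori bounded by $\int|\bar v|^2\,dt$ alone, since $\bar z = \mathbb{E}[z]$ (which enters the mean ODE for $\mathbb{E}[y]$) can depend on the random part of $u$. My plan to close the argument is to apply the first inequality (just established) to the centered control $u-\mathbb{E}[u]$, which has zero mean, and to combine it with the centered BSDE satisfied by $(y-\mathbb{E}[y], z-\mathbb{E}[z])$; then, choosing $\Theta$ to make $2K'\|\widetilde\Theta\|_\infty^2$ small relative to the constant produced by the first inequality, one performs a Gronwall-type absorption that returns the variance term to the left-hand side. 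This absorption of the variance contribution is the delicate and nontrivial step of the proof.
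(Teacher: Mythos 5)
Your treatment of the first inequality is correct and is essentially the paper's own argument made quantitative: the paper defines $\mathcal{A}u=u-\Theta(y-\mathbb{E}[y])$, exhibits its inverse through the very same modified BSDE with coefficients $A+B\Theta$ and $\widehat{A}-B\Theta$, and invokes the bounded inverse theorem, whereas you extract an explicit constant from the a priori estimate of \cite{LSX2019}. That half is fine.

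The second inequality is where the genuine gap lies, and you have identified exactly the right obstruction --- but the repair you sketch cannot work. First, $\widetilde{\Theta}$ is \emph{given} in the statement, so you are not free to ``choose $\Theta$ to make $2K'\|\widetilde{\Theta}\|_\infty^2$ small''; second, the variance contribution $\mathbb{E}\int_0^T|v-\bar v|^2dt$ is genuinely not dominated by $\int_0^T|\bar v|^2dt$, so there is nothing for a Gronwall-type absorption to close against. In fact the obstruction is fatal to the inequality itself whenever $\widetilde{C}\neq 0$: since $\mathbb{E}[z]$ enters the ODE for $\mathbb{E}[y]$ and depends on the martingale part of $u$, the map $\mathbb{E}[u]\mapsto\mathbb{E}[y]$ is not well defined. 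Concretely, take $n=m=1$, $T=1$, $A=\widehat{A}=\widehat{B}=\widehat{C}=0$, $B=C=1$, $\widetilde{\Theta}\equiv 1$, and $u(t)=W(t)+e^{t-1}-t$. Then $y(t)=-(1-t)W(t)+(1-t)^2/2-\int_t^1(e^{s-1}-s)\,ds$, so $\mathbb{E}[y](t)=e^{t-1}-t=\mathbb{E}[u](t)$; the left-hand side of the second inequality vanishes while $\int_0^1|\mathbb{E}[u]|^2dt>0$, so no constant $\gamma>0$ exists. Hence no completion of your plan is possible. For what it is worth, the paper's own proof of this half suffers from the same defect: the step $\int_0^T|\mathbb{E}[\widetilde{\mathcal{A}}^{-1}\widetilde{\mathcal{A}}u]|^2dt\le\|\widetilde{\mathcal{A}}^{-1}\|^2\int_0^T|\mathbb{E}[\widetilde{\mathcal{A}}u]|^2dt$ tacitly assumes that $\mathbb{E}[\widetilde{\mathcal{A}}^{-1}v]$ depends only on $\mathbb{E}[v]$, which fails precisely because of the $\widetilde{C}\,\mathbb{E}[\widehat{z}]$ term in the mean dynamics; the issue you flagged as ``delicate and nontrivial'' is real and is not resolved in the paper either.
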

\begin{proof}
For any $\Theta\in \mathcal{L}^{\infty}(0,  T; \mathbb R^{m\times n})$,
we define a bounded linear operator $\mathcal A: \mathcal{L}_{\mathbb{F}}^2(0,  T; \mathbb R^{m})\to \mathcal{L}_{\mathbb{F}}^2(0,  T; \mathbb R^{m})$ by
$$\mathcal{A}u=u-\Theta(y-\mathbb E[y]).$$
Then $\mathcal{A}$ is bijective and its inverse $\mathcal{A}^{-1}$ is given by
$$\mathcal{A}^{-1}u=u+\Theta(\widetilde{y}-\mathbb E[\widetilde{y}]),$$
where
\begin{equation*}
\left\{ \begin{aligned}
d\widetilde{y}=&\left((A+B\Theta)\widetilde{y}+(\widehat{A}-B\Theta)\mathbb E[\widetilde{y}]+Bu+\widehat{B}\mathbb E[u]+C\widetilde{z}+\widehat{C}\mathbb E[\widetilde{z}]\right)dt+\widetilde{z}dW,  \\
\widetilde{y}(T)=&\ 0.
\end{aligned}\right.
\end{equation*}
By the bounded inverse theorem, $\mathcal{A}^{-1}$ is bounded with norm $||\mathcal{A}^{-1}||>0$. Thus,
\begin{equation*}
\begin{aligned}
\mathbb E\left[\int_0^T|u(t)|^2dt\right]&=\mathbb E\left[\int_0^T|(\mathcal{A}^{-1}\mathcal{A}u)(t)|^2dt\right]\leq ||\mathcal{A}^{-1}||^2\mathbb E\left[\int_0^T|(\mathcal{A}u)(t)|^2dt\right]
\\&=||\mathcal{A}^{-1}||^2\mathbb E\left[\int_0^T|u-\Theta(y-\mathbb E[y])|^2dt\right],  \ \ \forall u\in\mathcal{L}_{\mathbb{F}}^2(0,  T; \mathbb R^{m}),
\end{aligned}
\end{equation*}
which implies the first inequality in \eqref{eq: uniform convex} with $\gamma=||\mathcal{A}^{-1}||^2$.

To prove the second inequality, for any $\widetilde{\Theta}\in \mathcal{L}^{\infty}(0,  T; \mathbb R^{m\times n})$, we define a bounded linear operator $\widetilde{\mathcal A}: \mathcal L_{\mathbb F}^2(0, T;\mathbb{R}^m)\to \mathcal L_{\mathbb F}^2(0, T;\mathbb{R}^m)$ by
$$\widetilde{\mathcal A}u=u-\widetilde{\Theta}\mathbb E[y].$$
Then $\widetilde{\mathcal A}$ is bijective and its inverse $\widetilde{\mathcal A}^{-1}$ is given by
$$\widetilde{\mathcal A}^{-1}u=u+\widetilde{\Theta}\mathbb E[\widehat{y}], $$
where
\begin{equation*}
\left\{ \begin{aligned}
d\widehat{y}=&\left[A\widehat{y}+\Big(\widehat{A}+\widetilde{B}\widetilde{\Theta}\Big)\mathbb E[\widehat{y}]
+Bu+\widehat{B}\mathbb E[u]+C\widehat{z}+\widehat{C}\mathbb E[\widehat{z}]\right]dt+\widehat{z}dW,  \\
\widehat{y}(T)=&\ 0.
\end{aligned}\right.
\end{equation*}
By the bounded inverse theorem, $\widetilde{\mathcal A}^{-1}$ is bounded with norm $||\widetilde{\mathcal A}^{-1}||>0$. Thus,
\begin{equation*}
\begin{aligned}
\int_0^T\big|\mathbb E[u(t)]\big|^2dt&=\int_0^T\big|(\mathbb E[\widetilde{\mathcal A}^{-1}\widetilde{\mathcal A}u])(t)\big|^2dt\leq||\widetilde{\mathcal A}^{-1}||^2\mathbb \int_0^T\big| (\mathbb E[\widetilde{\mathcal A}u])(t)\big|^2dt
\\&=||\widetilde{\mathcal A}^{-1}||^2\int_0^T\big|\mathbb E[u]-\widetilde{\Theta}\mathbb E[y]\big|^2dt,  \ \ \forall u\in\mathcal{L}_{\mathbb F}^2(0,  T; \mathbb R^{m}),
\end{aligned}
\end{equation*}
which implies the second inequality in \eqref{eq: uniform convex} with $\gamma=||\widetilde{\mathcal{A}}^{-1}||^2$.
\end{proof}

We now proceed to prove Theorem \ref{th: sufficient}.
\begin{proof}
By the continuity theorem of solution on initial condition,  there exists $\lambda_2$, such that for any $\lambda\geq \lambda_2$, Riccati equations
\begin{equation*}
\left\{
\begin{aligned}
&\dot\Upsilon_{\lambda}-\Upsilon_{\lambda} A^\top-A\Upsilon_{\lambda}
\\&+\left(\begin{array}{c}C^\top+ S_1^\top\Upsilon_{\lambda}\\B^\top+S_2^\top\Upsilon_{\lambda}\end{array}\right)^\top\left(\begin{array}{cc}I+\Upsilon_{\lambda} R_{11}& 0\\0&R_{22}\end{array}\right)^{-1}\left(\begin{array}{c}\Upsilon_{\lambda} C^\top+\Upsilon_{\lambda} S_1^\top\Upsilon_{\lambda}\\B^\top+S_2^\top\Upsilon_{\lambda}\end{array}\right)=0, \\
&\Upsilon_{\lambda}(T)=\lambda^{-1}I,
\end{aligned}
\right.
\end{equation*}
and
\begin{equation*}
\left\{
\begin{aligned}
&\dot{\widetilde{\Upsilon}}_{\lambda}-\widetilde{\Upsilon}_{\lambda} \widetilde{A}^\top-\widetilde{A}\widetilde{\Upsilon}_{\lambda}
\\&+\left(\begin{array}{c}\widetilde{C}^\top+\widetilde{S}_1^\top\widetilde{\Upsilon}_{\lambda}
\\\widetilde{B}^\top+\widetilde{S}_2^\top\widetilde{\Upsilon}_{\lambda}\end{array}\right)^\top\left(\begin{array}{cc}I+\Upsilon_{\lambda} \widetilde{R}_{11}&0\\0&\widetilde{R}_{22}\end{array}\right)^{-1}\left(\begin{array}{c}\Upsilon_{\lambda} \widetilde{C}^\top+\Upsilon_{\lambda}  \widetilde{S}_1^\top\widetilde{\Upsilon}_{\lambda}
\\\widetilde{B}^\top+\widetilde{S}_2^\top\widetilde{\Upsilon}_{\lambda}\end{array}\right)=0, \\
&\widetilde{\Upsilon}_{\lambda}(T)=\lambda^{-1}I
\end{aligned}
\right.
\end{equation*}
admit  unique solutions $\Upsilon_{\lambda}, \widetilde{\Upsilon}_{\lambda}$.
Moreover, by the comparison theorem, we further have $\Upsilon_{\lambda}>\Upsilon\geq 0, \widetilde{\Upsilon}_{\lambda}>\widetilde{\Upsilon}\geq 0$, and $\Upsilon_{\lambda}, \widetilde{\Upsilon}_{\lambda}$ are monotonically decreasing with respect to $\lambda$.
In the following, we prove that $(I+\Upsilon_{\lambda} R_{11})^{-1}\Upsilon_{\lambda}\geq 0$, $(I+\Upsilon_{\lambda} \widetilde{R}_{11})^{-1}\Upsilon_{\lambda}\geq 0$. Actually,
\begin{equation*}
\begin{aligned}
&(I+\Upsilon_{\lambda} R_{11})^{-1}\Upsilon_{\lambda}=(R_{11}+\Upsilon_{\lambda}^{-1})^{-1},
\\&(I+\Upsilon_{\lambda} \widetilde{R}_{11})^{-1}\Upsilon_{\lambda}=(\widetilde{R}_{11}+\Upsilon_{\lambda}^{-1})^{-1},
\end{aligned}
\end{equation*}
which imply that $(I+\Upsilon_{\lambda} R_{11})^{-1}\Upsilon_{\lambda}$ and  $(I+\Upsilon_{\lambda} \widetilde{R}_{11})^{-1}\Upsilon_{\lambda}$ are monotonically decreasing with respect to $\lambda$. Combining with the fact that $(I+\Upsilon R_{11})^{-1}\Upsilon\geq 0, (I+\Upsilon \widetilde{R}_{11})^{-1}\Upsilon\geq 0$, we have
$(I+\Upsilon_{\lambda} R_{11})^{-1}\Upsilon_{\lambda}\geq 0$, $(I+\Upsilon_{\lambda} \widetilde{R}_{11})^{-1}\Upsilon_{\lambda}\geq 0$.

Defining $\Pi_{\lambda}=\Upsilon_{\lambda}^{-1}, \widetilde{\Pi}_{\lambda}=\widetilde{\Upsilon}_{\lambda}^{-1}$ for $\lambda\geq \lambda_2$, then $\Pi_{\lambda}> 0,\widetilde{\Pi}_{\lambda}>0.$
It is clear that  $\Pi_{\lambda}$ and $\widetilde{\Pi}_{\lambda}$ satisfy Riccati equations
\begin{equation}
\left\{\begin{aligned}&\dot{\Pi}_{\lambda}+\Pi_{\lambda}A+A^\top \Pi_{\lambda}\\&\ \ -\left(\begin{array}{c}
C^\top\Pi_{\lambda}+S_1^\top\\B^\top\Pi_{\lambda}+S_2^\top\end{array}\right)^\top\left(\begin{array}{cc}
R_{11}+\Pi_{\lambda}&0\\0&R_{22}\end{array}\right)^{-1}\left(\begin{array}{c}
C^\top \Pi_{\lambda}+S_1^\top\\B^\top \Pi_{\lambda}+S_2^\top\end{array}\right)=0,\\&\Pi_{\lambda}(T)=\lambda I,
\end{aligned}\right.
\end{equation}
and
\begin{equation}
\left\{\begin{aligned}&\dot{\widetilde{\Pi}}_{\lambda}+\widetilde{\Pi}_{\lambda}\widetilde{A}+\widetilde{A}^\top \widetilde{\Pi}_{\lambda}\\&\ \ -\left(\begin{array}{c}
\widetilde{C}^\top\widetilde{\Pi}_{\lambda}+\widetilde{S}_1^\top\\\widetilde{B}^\top \widetilde{\Pi}_{\lambda}+\widetilde{S}_2^\top\end{array}\right)^\top\left(\begin{array}{cc}
\widetilde{R}_{11}+\Pi_{\lambda}&0\\0&\widetilde{R}_{22}\end{array}\right)^{-1}\left(\begin{array}{c}
\widetilde{C}^\top \widetilde{\Pi}_{\lambda}+\widetilde{S}_1^\top\\\widetilde{B}^\top \widetilde{\Pi}_{\lambda}+\widetilde{S}_2^\top\end{array}\right)=0,\\&\widetilde{\Pi}_{\lambda}(T)=\lambda I,
\end{aligned}
\right.
\end{equation}
respectively. Moreover,
$R_{11}+\Pi_{\lambda}\gg0, \widetilde{R}_{11}+\Pi_{\lambda}\gg 0.$

Let $$\Sigma=\big\{\psi\in \mathcal{L}^\infty(0,T;\mathbb S^n)|\ \psi
\text{ is  a deterministic continuous differential function}\big\}.$$
For  $h=(H, \widetilde{H})\in \Sigma\times \Sigma$, we define
\begin{equation}\label{eq:equicoeff}
\left\{
\begin{aligned}
&Q_{h}=\dot{H}+HA+A^\top H, \;  S_{1,  h}=S_1+H C, \;  S_{2,  h}=S_2+H B,\;  N_{1,  h}=R_{11}+H,
\\&\widetilde{Q}_{h}=\dot{\widetilde{H}}+\widetilde{H}\widetilde{A}+\widetilde{A}^\top \widetilde{H},\;   \widetilde{S}_{1,  h}=\widetilde{S}_1+\widetilde{H} \widetilde{C},  \; \widetilde{S}_{2,  h}=\widetilde{S}_2+\widetilde{H} \widetilde{B}, \; \widetilde{N}_{1,  h}=\widetilde{R}_{11}+H,
\\&\widetilde{q}_h=q+H f+(\widetilde{H}-H) \mathbb E[f].
\end{aligned}\right.
\end{equation}
We introduce a family of equivalent cost functionals
\begin{equation*}
\begin{aligned}
&J_h(\zeta;  u)\\=&\ \frac{1}{2}\mathbb E\left[\int_{0}^T\Bigg\langle\left(\begin{array}{ccc}
Q_{h}& S_{1,  h}& S_{2,  h}
\\S_{1,  h}^\top &N_{1,  h}&0
\\S_{2,  h}^\top&0&R_{22}\end{array}\right)\left(\begin{array}{c}Y-\mathbb E[Y]\\Z-\mathbb E[Z]\\u-\mathbb E[u]\end{array}\right),  \left(\begin{array}{c}Y-\mathbb E[Y]\\Z-\mathbb E[Z]\\u-\mathbb E[u]\end{array}\right)\Bigg\rangle dt\right.
\\&+\int_{0}^T\Bigg\langle\left(\begin{array}{ccc}
\widetilde{Q}_{h}& \widetilde{S}_{1,  h}& \widetilde{S}_{2,  h}
\\\widetilde{S}_{1,  h}^\top&\widetilde{N}_{1,  h}&0
\\\widetilde{S}_{2,  h}^\top&0&\widetilde{R}_{22} \end{array}\right)\left(\begin{array}{c}\mathbb E[Y]\\\mathbb E[Z]\\\mathbb E[u]\end{array}\right),  \left(\begin{array}{c}\mathbb E[Y]\\\mathbb E[Z]\\\mathbb E[u]\end{array}\right)\Bigg\rangle dt
\\&\left.+2\int_{0}^T\Bigg\langle\left(\begin{array}{c}\widetilde{q}\\\rho_1\\\rho_2\end{array}\right),  \left(\begin{array}{c}Y\\Z\\u\end{array}\right)\Bigg\rangle dt
+2\langle g,  Y(0)\rangle\right]
\\&+\frac{1}{2}\mathbb E\left[\langle H(0)(Y(0)-\mathbb E[Y(0)]),  Y(0)-\mathbb E[Y(0)]\rangle
+\langle \widetilde{H}(0)\mathbb E[Y(0)],  \mathbb E[Y(0)]\rangle\right].
\end{aligned}
\end{equation*}
Actually, we have
\begin{equation}\label{eq:equi}
\begin{aligned}
J(\zeta;  u)=J_{h}(\zeta;  u)-\frac{1}{2}\mathbb E\left[\langle H(T)(\zeta-\mathbb E[\zeta]),  \zeta-\mathbb E[\zeta]\rangle +\langle\widetilde{ H}(T)\mathbb E[\zeta], \mathbb E[\zeta]\rangle\right].
\end{aligned}
\end{equation}
Thus we may take   $h=(H,\widetilde{H})=(\Pi_{\lambda},\widetilde{\Pi}_{\lambda})$ with  $\lambda\geq \lambda_2$. It is obviously that
\begin{equation*}
\begin{aligned}
&J_0(0;  u)=J_{0,h}(0;  u)
\\=&\ \frac{1}{2}\mathbb E\left[\int_{0}^T\Bigg\langle\left(\begin{array}{ccc}
Q_{h}& S_{1,  h}& S_{2,  h}
\\S_{1,  h}^\top &N_{1,  h}&0
\\S_{2,  h}^\top&0&R_{22}\end{array}\right)\left(\begin{array}{c}y-\mathbb E[y]\\z-\mathbb E[z]\\u-\mathbb E[u]\end{array}\right),  \left(\begin{array}{c}y-\mathbb E[y]\\z-\mathbb E[z]\\u-\mathbb E[u]\end{array}\right)\Bigg\rangle dt\right.
\\&\left.+\int_{0}^T\Bigg\langle\left(\begin{array}{ccc}
\widetilde{Q}_{h}& \widetilde{S}_{1,  h}& \widetilde{S}_{2,  h}
\\\widetilde{S}_{1,  h}^\top&\widetilde{N}_{1,  h}&0
\\\widetilde{S}_{2,  h}^\top&0&\widetilde{R}_{22} \end{array}\right)\left(\begin{array}{c}\mathbb E[y]\\\mathbb E[z]\\\mathbb E[u]\end{array}\right),  \left(\begin{array}{c}\mathbb E[y]\\\mathbb E[z]\\\mathbb E[u]\end{array}\right)\Bigg\rangle dt\right]
\\&+\frac{1}{2}\mathbb E\left[\langle H(0)(y(0)-\mathbb E[y(0)]),  y(0)-\mathbb E[y(0)]\rangle
+\langle \widetilde{H}(0)\mathbb E[y(0)],  \mathbb E[y(0)]\rangle\right].
\end{aligned}
\end{equation*}
Recalling that $R_{22}\gg0, \widetilde{R}_{22}\gg0$ and combining with Lemma \ref{lemma: uniform convex}, we have
\begin{equation*}
\begin{aligned}
&J_0(0;  u)
\\\geq&\ \frac{1}{2}\mathbb E\left[\int_{0}^T\bigg(\left\langle R_{22}\left(u-\mathbb E[u]+R_{22}^{-1}S_{2, h}^{\top}(y-\mathbb E[y])\right), u-\mathbb E[u]+R_{22}^{-1}S_{2, h}^{\top}(y-\mathbb E[y])\right\rangle\right.
\\&\left.+\left\langle \widetilde{R}_{22}\left(\mathbb E[u]+\widetilde{R}_{22}^{-1}\widetilde{S}_{2, h}^{\top}\mathbb E[y]\right), \mathbb E[u]+\widetilde{R}_{22}^{-1}\widetilde{S}_{2, h}^{\top}\mathbb E[y]\right\rangle\bigg) dt\right]
\\\geq&\ \frac{1}{2}\delta\mathbb E\left[\int_{0}^T\Big(\left|u-\mathbb E[u]+R_{22}^{-1}S_{2, h}^{\top}(y-\mathbb E[y])\right|^2+\gamma|\mathbb E[u]|^2\Big)\right]
\\\geq&\ \frac{\delta\gamma}{2(1+\gamma)}\mathbb E\left[\int_{0}^T\left|u+R_{22}^{-1}S_{2, h}^{\top}(y-\mathbb E[y])\right|^2\right]
\\\geq &\ \frac{\delta\gamma^2}{2(1+\gamma)}\mathbb E\left[\int_{0}^T\left|u\right|^2\right].
\end{aligned}
\end{equation*}
The proof is completed.
\end{proof}
\section{Examples}
In this section, we present two illustrative examples. In the first example, the assumptions (H2) in \cite{LSX2019} does not hold, but the corresponding Riccati equations admit unique solutions which satisfy  Theorem \ref{th: sufficient}. Thus, the cost functional is uniformly convex.
This example shows that the uniform convexity condition (Assumption  A3)   is indeed weaker than assumptions (H2) in \cite{LSX2019}. In Example 5.2, it is difficult to prove the existence
and uniqueness of solutions to related Riccati equations. We use the equivalent cost functional
method to construct an  equivalent functional which satisfies Assumption A3 first, and then we
obtain an optimal control of the original stochastic control problem via solutions of Riccati
equations.

\textbf{Example 6.1} Consider a one-dimensional controlled mean-field BSDE
\begin{equation*}\label{example 1}
\left\{ \begin{aligned}
dY=&\left(2Y-2\mathbb E[Y]+2u-\mathbb E[u]\right)dt+ZdW,\quad t\in{[0,1]},\\
Y(1)=&\ \xi,
\end{aligned}\right.
\end{equation*}
with cost functional
\begin{equation*}\label{cost functional example1}
\begin{aligned}
&J(\xi;u)=\frac{1}{2}\mathbb E\Bigg[\int_0^1\Big(-Z^2+2u^2-2\mathbb E[Y]\mathbb E[u]-\mathbb E[Z]^2-\mathbb E[u]^2\Big)dt\Bigg].
\end{aligned}
\end{equation*}
It is difficult to check that whether $J(0;u)\geq\alpha\mathbb E\big[\int_0^T|u|^2 dt\big]$ holds for some $\alpha>0$. 
With the data, Riccati equations \eqref{eq.RE1} and  \eqref{eq.RE2}  are
\begin{equation*}
\left\{\begin{aligned}
&\dot{\Upsilon}-4\Upsilon+2=0,\\
&\Upsilon(1)=0,
\end{aligned}
\right.
\end{equation*}
and
\begin{equation*}
\left\{\begin{aligned}
&\dot{\widetilde{\Upsilon}}+(1-\widetilde{\Upsilon})^2
=0,\\
&\widetilde{\Upsilon}(1)=0,
\end{aligned}
\right.
\end{equation*}
respectively. Solving them, we get  $$\Upsilon(t)=\frac{1}{2}-\frac{\exp(4t-4)}{2}, \ \widetilde{\Upsilon}(t)=\frac{1}{t-2}+1.$$
Note that
\begin{equation*}
\begin{aligned}\Upsilon\geq 0,  \widetilde{\Upsilon}\geq 0, 1-\Upsilon=\frac{1}{2}+\frac{\exp(4t-4)}{2},1-2\Upsilon =\exp(4t-4).
\end{aligned}
\end{equation*}
Theorem \ref{th: sufficient} implies  that there exists a constant $\alpha>0$, such that
$$J(0;  u)\geq \alpha \mathbb E\left[\int_{0}^T|u|^2dt\right],   \forall u\in \mathcal U[0, T].$$
We now introduce the following mean-field BSDE:
\begin{equation}
\left\{
\begin{aligned}
&d\eta=\Big[2(\eta-\mathbb E [\eta])+ (1-\widetilde{\Upsilon})\mathbb E [\eta]\Big]dt+\beta dW, \\
&\eta(T)=\ \zeta.
\end{aligned}
\right.
\end{equation}
According Theorem \ref{Th: no-feedback},
  the optimal control   is given by
\begin{equation}\label{optimal control-exa}
\begin{aligned}
u=&-X+\widetilde{\Upsilon}\mathbb E[X] +\mathbb E[\eta],
\end{aligned}
\end{equation}
where $X$ is given by
\begin{equation}\label{adjoint decouple-exa}
\left\{\begin{aligned}
&dX=-\bigg\{2(X-\mathbb E[X])
+(1-\widetilde{\Upsilon})\mathbb E[X]
-\mathbb E[\eta]\bigg\}dt
\\&\qquad+\bigg\{(I-\Upsilon)^{-1}(\beta-\mathbb E[\beta])
+2(I-2\Upsilon)^{-1}\mathbb E[\beta]
\bigg\}dW,
\\&X(0)=0.
\end{aligned}
\right.
\end{equation}
Moreover, the corresponding value function is
\begin{equation*}
\begin{aligned}
&V(\zeta)=-\frac{1}{2}\mathbb E\left\{\int_{0}^T\left[\left\langle \beta,  (I-\Upsilon )^{-1}(\beta-\mathbb E[\beta])+2(I-2\Upsilon)^{-1}E[\beta]\right\rangle
+\Big\langle \eta,\mathbb E[\eta]\Big\rangle
\right]dt\right\}.
\end{aligned}
\end{equation*}

\textbf{Example 6.2} Consider a one-dimensional controlled mean-field BSDE
\begin{equation*}\label{example 1}
\left\{ \begin{aligned}
dY=&\left(Y+\mathbb E[Y]+u+\mathbb E[u]+Z\right)dt+ZdW,\quad t\in{[0,1]},\\
Y(1)=&\ \xi,
\end{aligned}\right.
\end{equation*}
with cost functional
\begin{equation*}\label{cost functional example1}
\begin{aligned}
&J(\xi;u)=\frac{1}{2}\mathbb E\Bigg[\int_0^1\Big(-8YZ-6Yu-2Z^2+u^2
+4\mathbb E[Y]\mathbb E[Z]-2\mathbb E[Y]\mathbb E[u]+\mathbb E[Z]^2\Big)dt\Bigg].
\end{aligned}
\end{equation*}
It is difficult to check that whether an optimal control exists.
With the data, Riccati equations \eqref{eq.RE1} and  \eqref{eq.RE2}  are
\begin{equation}\label{eq.APP2RE1}
\left\{\begin{aligned}
&\dot{\Upsilon}-2\Upsilon+\frac{\Upsilon(1-4\Upsilon)^2}{1-2\Upsilon}+(1-3\Upsilon)^2=0,\\
&\Upsilon(1)=0,
\end{aligned}
\right.
\end{equation}
and
\begin{equation}\label{eq.APP2RE2}
\left\{\begin{aligned}
&\dot{\widetilde{\Upsilon}}-4\widetilde{\Upsilon}
+\frac{\Upsilon(1-2\widetilde{\Upsilon})^2}{1-\Upsilon}
+(2-4\widetilde{\Upsilon})^2
=0,\\
&\widetilde{\Upsilon}(1)=0,
\end{aligned}
\right.
\end{equation}
respectively. It is difficult to give the solvabilities of   Riccati equations \eqref{eq.APP2RE1} and \eqref{eq.APP2RE2} due to the complexity.  According to \eqref{eq:equicoeff},
we have for $h_0=(H_0, \widetilde{H}_0)=(3,2)$,
\begin{equation}
\left\{
\begin{aligned}
&Q_{h_0}=6, \;  S_{1,  h}=-1, \;  S_{2,  h}=0,\;  N_{1,  h}=1,
\\&\widetilde{Q}_{h}=8,\;   \widetilde{S}_{1,  h}=0,  \; \widetilde{S}_{2,  h}=0, \; \widetilde{N}_{1,  h}=1.
\end{aligned}\right.
\end{equation}
We can check that
$$J_{h_0}(0;u)\geq\alpha \mathbb E\left[\int_{0}^T|u|^2dt\right],  \forall u\in \mathcal U[0,T].$$
According to \eqref{eq:equi}, we have
$$J(0;u)=J_{h_0}(0;u)\geq\alpha \mathbb E\left[\int_{0}^T|u|^2dt\right],  \forall u\in \mathcal U[0,T],$$
which implies that Assumption A3 holds. Thus we obtain that  Riccati equations  \eqref{eq.APP2RE1} and \eqref{eq.APP2RE2} admits unique solutions $\Upsilon$ and $\widetilde{\Upsilon}$, respectively from Theorem \ref{Th: solva RE backward}.
Further, mean-field BSDE \eqref{eq.BSDE1} can be rewritten as
\begin{equation*}
\left\{
\begin{aligned}
d\eta=&\Big[\eta+\mathbb E [\eta]+(1-4\Upsilon)(1-2\Upsilon)^{-1}\left(4\Upsilon (\eta-\mathbb E[\eta])+(\beta-\mathbb E[\beta])\right)
+3(1-3\Upsilon)\left(\eta-\mathbb E[\eta]\right)
\\&\ \ +\left[(1-2\widetilde{\Upsilon})(I-\Upsilon )^{-1}(2\Upsilon \mathbb E[\eta]+\mathbb E [\beta])+4(2-4\widetilde{\Upsilon})\mathbb E [\eta]
\right]dt+\beta dW, \\
\eta(T)=&\ \zeta.
\end{aligned}
\right.
\end{equation*}
According to Theorem \ref{Th: no-feedback}, the corresponding optimal control  is given by
\begin{equation*}
\begin{aligned}
u=&-\left[(1 -3\Upsilon)(X-\mathbb E[X])-3(\eta-\mathbb E[\eta])\right]-\left[(2-4\widetilde{\Upsilon})\mathbb E[X]-4\mathbb E[\eta]\right],
\end{aligned}
\end{equation*}
where $X$ satisfies
\begin{equation*}
\left\{\begin{aligned}
&dX=-\bigg\{\left[4-9\Upsilon+4\Upsilon(1-2\Upsilon)^{-1}(1-4\Upsilon)\right](X-\mathbb E[X])
\\&\qquad-4(I-2\Upsilon )^{-1}\left[4\Upsilon (\eta-\mathbb E[\eta])+(\beta-\mathbb E[\beta])\right]
-9(\eta-\mathbb E[\eta])
\\&\qquad+\left[2+2(I-\Upsilon)^{-1}\Upsilon(1-2\widetilde{\Upsilon})+4(2-4\widetilde{\Upsilon})\right]\mathbb E[X]
\\&\qquad-2(I-\Upsilon)^{-1}\left(2\Upsilon\mathbb E[\eta]+\mathbb E[\beta]\right)-16\mathbb E[\eta]\bigg\}dt
\\&\qquad-\bigg\{(I-2\Upsilon)^{-1}\left[(1-4\Upsilon)(X-\mathbb E[X])
-4(\eta-\mathbb E[\eta])
-2(\beta-\mathbb E[\beta])\right]
\\&\qquad+(I-\Upsilon)^{-1}\left[(1-2\widetilde{\Upsilon})\mathbb E[X]
-2\mathbb E[\eta]
-\mathbb E[\beta]\right]
\bigg\}dW,
\\&X(0)=0.  
\end{aligned}
\right.
\end{equation*}
Moreover, the corresponding value function is
\begin{equation*}
\begin{aligned}
V(\zeta)=&-\frac{1}{2}\mathbb E\left\{\int_{0}^T\left[\left\langle \beta,2(1-2\Upsilon)^{-1}(\beta-\mathbb E[\beta])+(1-\Upsilon)^{-1}E[\beta]\right\rangle\right.\right.
\\&+\Big\langle \eta,  \left(16\Upsilon(1-2\Upsilon)^{-1}+9\right)(\eta-\mathbb E[\eta])+\left(4\Upsilon(1-\Upsilon)^{-1}+16\right)\mathbb E[\eta]\Big\rangle
\\&\left.+2\left\langle\eta,  4(1-2\Upsilon)^{-1}(\beta-\mathbb E[\beta])+2(1-\Upsilon)^{-1}\mathbb E[\beta]\right\rangle\right]dt\bigg\}.
\end{aligned}
\end{equation*}
\section{Conclusion}
In this paper, we study an indefinite LQ control problem of mean-field BSDE. By using the limiting procedure and equivalent cost functional method, we propose some necessary and sufficient conditions for the uniform convexity of  cost functional are given in terms of two coupled Riccati equations with terminal conditions.  Further, we develop   general procedures for deriving explicit formulas of  optimal control and optimal cost  under the uniform convexity of  cost functional.  
The theoretical results obtained in this paper provide an insight into LQ zero-sum game problems of mean-field backward stochastic systems.
Well-posedness of Riccati equation plays a  crucial role in deriving the explicit representation  of saddle point.
Inspired by the general procedures developed in this paper, we may investigate the well-posedness of Riccati equation by establishing the relationship between forward and backward mean-field LQ zero-sum game problems. We will further  investigate some results on this problem and related topics in future.




\begin{thebibliography}{00}
 \bibliographystyle{elsarticle-num}
\bibitem{Kac1956}
       M. Kac,  Foundations of kinetic theory.  Proceedings of the 3rd Berkeley Symposium on Mathematical Statistics and Probability, 1956, 3: 171-197.

\bibitem{BDLP2009}
     R. Buckdahn,  B. Djehiche, J. Li, S. Peng, Mean-field backward stochastic differential equations: a limit approach. Annals of Probability, 2009, 37: 1524-1565.

\bibitem{BLP2009}
        R. Buckdahn, J. Li, S. Peng, Mean-field backward stochastic differential equations and related partial differential equations. Stochatic   Processes and Their  Application. 2019, 119: 3133-3154.

\bibitem{AD2011}
         D. Andersson, B. Djehiche, A maximum principle for SDEs of mean-field type. Applied Mathematics and Optimization, 2011, 63(3): 341-356.

\bibitem{BLM2016}
        R. Buckdahn, J. Li,  J. Ma, A stochastic maximum principle for general mean-field systems. Applied Mathematics and Optimization, 2016, 74(3): 507-534.

\bibitem{WZZ2013}
      G. Wang, C. Zhang,   W. Zhang,  Stochastic maximum principle for mean-field type optimal control under partial information. IEEE Transactions on Automatic Control, 2013, 59(2): 522-528.

\bibitem{WXX2017}
      G. Wang, H. Xiao, G. Xing, An optimal control problem for mean-field forward-backward stochastic differential equation with noisy observation. Automatica, 2017, 86: 104-109.

\bibitem{WW2022}
        G. Wang, Z. Wu, A maximum principle for mean-field stochastic  control system with noisy observation. Automatica, 2022, 137:  110135.

\bibitem{Yong2013}
        J. Yong, Linear-quadratic optimal control problems for mean-field stochastic differential equations. SIAM Journal on Control and Optimization, 2013, 51(4): 2809-2838.

\bibitem{HLY2017}
       J. Huang, X. Li, J. Yong, A linear-quadratic optimal control problem for mean-field stochastic differential equations in infinite horizon. Mathematical Control and Related Fields, 2017, 5(1): 97-139.



\bibitem{ELN2013}
      R. Elliott, X. Li, Y. Ni, Discrete time mean-field stochastic linear-quadratic optimal control problems. Automatica, 2013, 49(11): 3222-3233.

\bibitem{NEL2015}
       Y. Ni, R. Elliott, X. Li, Discrete-time mean-field stochastic linear-quadratic optimal control problems, ii: Infinite horizon case. Automatica, 2015, 57: 65-77.

\bibitem{CLZ1998}
       S. Chen, X. Li, X. Zhou, Stochastic linear quadratic regulators with indefinite control weight costs. SIAM Journal on Control and Optimization, 1998, 36(5): 1685-1702.

\bibitem{Yu2013}
          Z. Yu, Equivalent cost functionals and stochastic linear quadratic optimal control problems. ESAIM: Control, Optimisation and Calculus of Variations, 2013, 19(1): 78-90.

\bibitem{WW2021}
          G. Wang, W. Wang, Indefinite linear-quadratic optimal control of mean-field stochastic differential equation with jump diffusion: an equivalent cost functional method. IEEE Transactions on Automatic Control, 2024, 69(11): 7449-7462.

\bibitem{LLY2020}
           N. Li, X. Li, Z. Yu, Indefinite mean-field type linear-quadratic stochastic optimal control problems. Automatica, 2020, 122, no. 109627.

\bibitem{LSY2016}
          X. Li, J. Sun, J. Yong, Mean-field stochastic linear quadratic optimal control problems: closed-loop solvability. Probability, Uncertainty and Quantitative Risk, 2016, 1(2): 1-24.

\bibitem{sun2017}
        J.  Sun,   Mean-field stochastic linear quadratic optimal control problems:  open-loop solvabilities.  ESAIM:  Control,  Optimisation and Calculus of Variations,  2017,  23(3):   1099-1127.


\bibitem{NLZ2015}
          Y. Ni, X. Li, J. Zhang, Indefinite mean-field stochastic linear quadratic optimal control: from finite horizon to infinite horizon. IEEE Transactions on Automatic Control, 2015, 61(11): 3269-3284.

\bibitem{NZL2014}
       Y. Ni, J. Zhang,  X. Li, Indefinite mean-field stochastic linear quadratic optimal control. IEEE Transactions on Automatic Control, 2014, 60(7): 1786-1800.

\bibitem{WZ2021}
         B. Wang, H. Zhang, Indefinite linear quadratic mean-field social control problems with multiplicative noise. IEEE Transactions on Automatic Control, 2021, 66(11): 5221-5236.


\bibitem{LSX2019}
         X.  Li,  J.  Sun,  J.  Xiong,  Linear quadratic optimal control problems for mean-field backward stochastic differential equations.  Applied Mathematics and  Optimization,  2019,  80:  223-250.




\bibitem{DHW2018}
          K.  Du,  J.  Huang,  Z.  Wu,  Linear quadratic mean-field-game of backward stochastic differential systems.   Mathematical Control and Related Fields,  2018,  8(3\&4):   653-678.



\bibitem{FHW2021}
        X. Feng, J. Huang, S. Wang, Social optima of backward linear-quadratic-gaussian mean-field teams. Applied Mathematics and  Optimization,  2021, 84 (Suppl 1): 651-694.

\bibitem{HWW2016}
       J. Huang, S. Wang, Z. Wu, Backward mean-field linear-quadratic-Gaussian (LQG) games: full and partial information. IEEE Transactions on Automatic Control, 2016, 61: 3784-3796.
















\bibitem{SWX2023}
         J.  Sun,  Z.  Wu,  J.  Xiong,  Indefinite backward stochastic linear-quadratic optimal control problems. ESAIM:  Control,  Optimisation and Calculus of Variations,   2023,  29:  1-30.

\bibitem{SWX2022}
        J.  Sun,  J.   Wen,   J.   Xiong,   General indefinite backward stochastic linear-quadratic optimal control problems.  ESAIM:  Control,  Optimisation and Calculus of Variations,   2022,  28:  1-17.
\bibitem{BGFB1994}
         S. Boyd, L. El, {G}haoui, E. Feron, V. Balakrishnan, Linear Matrix Inequality in Systems and Control Theory. 1st edition, SIAM, Philadelphia, 1994.
\end{thebibliography}
\end{document}